\documentclass{article} % For LaTeX2e
\usepackage{iclr2027_conference,times}

\usepackage{amsmath,amsfonts,bm}

\def\eqref#1{equation~\ref{#1}}
\def\1{\bm{1}}

\DeclareMathAlphabet{\mathsfit}{\encodingdefault}{\sfdefault}{m}{sl}
\SetMathAlphabet{\mathsfit}{bold}{\encodingdefault}{\sfdefault}{bx}{n}

\newcommand{\E}{\mathbb{E}}

\newcommand{\R}{\mathbb{R}}

\DeclareMathOperator*{\argmin}{arg\,min}

\usepackage{enumitem}
\usepackage{algorithm}% http://ctan.org/pkg/algorithms
\usepackage{algpseudocode}
\usepackage{mathtools}
\usepackage{amsthm}
\usepackage{hyperref}
\usepackage{url}

\newcommand{\inner}[2]{\left\langle #1,\, #2 \right\rangle}

\newcommand{\Norm}[1]{\left\|#1\right\|}
\newcommand{\normsq}[1]{\left\|#1\right\|^2}
\usepackage{tikz}
\usetikzlibrary{automata, positioning, arrows}
\usepackage{xcolor}

\def \R {\mathbb{R}}

\def \w {\mathbf{w}}
\def \v {\mathbf{v}}

\def \x {\mathbf{x}}

\def \c {\mathbf{c}}

\def \1 {\mathbf{1}}
\def \z {\mathbf{z}}
\def \s {\mathbf{s}}

\def \y {\mathbf{y}}
\def \u {\mathbf{u}}

\def \L {\mathcal{L}}

\def \I {\mathbf{I}}

\usepackage{titlesec}
\titlespacing*{\section}{0pt}{3pt}{3pt}

\usepackage{hyperref}

\def \B {\mathcalB}

\def \y {\mathbf{y}}
\def \E {\mathbf{E}}

\def \x {\mathbf{x}}

\def \D {\mathcal{D}}
\def \z {\mathbf{z}}
\def \u {\mathbf{u}}

\def \w {\mathbf{w}}

\def \R {\mathbb{R}}

\def \v {\mathbf{v}}

\def \c {\mathbf{c}}

\def \B {\mathcal{B}}
\def \Bb {\mathbb{B}}

\def \s {\mathbf{s}}

\newtheorem{thm}{Theorem}
\newtheorem{prop}{Proposition}
\newtheorem{lem}{Lemma}

\newtheorem{asm}{Assumption}

\title{A Single-Loop, Constant-Batch First-Order Penalty Method for Stochastic Bilevel Optimization}

\author{Xingyu Chen, Ming Yang, Quanqi Hu, Tianbao Yang \thanks{Correspondence to: \texttt{tianbao-yang@tamu.edu}.} \\
Department of Computer Science and Engineering\\
Texas A\&M University, College Station, USA \\
\texttt{\{tamucxy2023, myang, quanqi-hu, tianbao-yang\}@tamu.edu} \\
}

\iclrfinalcopy % Uncomment for camera-ready version, but NOT for submission.
\begin{document}

\maketitle
\lhead{Preprint}

\begin{abstract}
Recent advances in penalty-based methods for stochastic bilevel optimization (SBO) have eliminated the need for second-order derivative oracles. However, for stochastic nonconvex–strongly convex bilevel problems, existing first-order methods typically rely on nested loops and/or large batch sizes for attaining $O(\epsilon^{-6})$ or $O(\epsilon^{-4})$ sample complexity under standard bounded-variance assumption or mean-square smoothness assumption. Achieving these rates with a single-loop penalty method and a constant batch size remains challenging due to a large penalty value needed for an accurate approximation. To address this challenge, we develop a stochastic SIngle-loop COnstant-Batch first-order penalty method (SICO) that combines two complementary ingredients. First, it performs one stochastic-gradient update per-iteration for both the original lower-level and penalized  problems, with a projection that controls the separation between their iterates. Second, it applies an exponential moving average to stabilize the upper-level gradient estimator. We show that this combination achieves $O(\epsilon^{-6})$ sample complexity using only $O(1)$ stochastic-gradient samples per iteration under unbiased, bounded-variance stochastic gradients. Under the additional mean-square smoothness assumption on the lower-level stochastic gradients, the same algorithm improves the complexity to $O(\epsilon^{-4})$ also with $O(1)$ batch size. To the best of our knowledge, this is the first work to match the best-known convergence rate for fully first-order SBO methods using a single loop and a constant batch size. This result addresses an open problem posed in the literature.

%Bilevel optimization provides a natural framework for a wide range of machine learning problems, including hyperparameter optimization, data reweighting, and meta-learning. Despite substantial recent progress, obtaining strong convergence guarantees for stochastic nonconvex-strongly convex(NCSC) bilevel optimization without second order oracle remains challenging. Existing methods attaining the best-known \(O(\epsilon^{-6})\) stochastic first-order oracle complexity typically rely on double-loop schemes or large minibatches, while constant-batch single-loop methods have worse complexity. We propose a single-loop penalty method that uses only \(O(1)\) stochastic-gradient samples per iteration. The method jointly tracks the penalized and original lower-level solutions and employs an exponential moving average to avoid dependence on large minibatches when computing stochastic gradient estimator. Under standard unbiased bounded-variance stochastic gradients without stochastic smoothness, our method achieves \(O(\epsilon^{-6})\) sample complexity, matching the best-known rate while retaining a constant-batch single-loop implementation.
\end{abstract}

 \setlength\abovedisplayskip{4pt}
\setlength\belowdisplayskip{4pt}
\setlength{\textfloatsep}{4pt}% Remove \textfloatsep

\section{Introduction}
\label{sec:intro}
In this paper we study \emph{nonconvex--strongly convex (NC--SC) stochastic bilevel optimization (SBO)}:
\begin{equation}
    \min_{\w\in\mathbb R^{d_w}}     F(\w):=f(\w,\y^*(\w)), \qquad
    \y^*(\w):= \arg\min_{\y\in\mathbb R^{d_y}}g(\w,\y), \label{eq:bilevel}
\end{equation}
where the lower level (LL) function $g(\w,\y)\coloneqq \E_{\xi_g\sim\D_g}g(\w,\y, \xi_g)$ is jointly smooth and strongly convex in $\y$ for every $\w$, while the upper level (UL) function $f(\w, \y)\coloneqq \E_{\xi_f\sim\D_f}f(\w,\y, \xi_f)$ is smooth but potentially nonconvex. Both $f$ and $g$ are accessed through stochastic gradients. This formulation captures learning problems in which one optimization task depends on the solution of another, including hyperparameter optimization~\citep{franceschi2018bilevel, bao2021stability}, data hyper-cleaning and reweighting~\citep{gao2022self}, and meta-learning with strongly convex inner problems
\citep{ji2021bilevel, kwon2023fully}.

A substantial body of work has developed algorithms and convergence guarantees for this problem class. Representative contributions include early approximation methods~\citep{ghadimi2018approximation}, two-timescale stochastic approximation~\citep{doi:10.1137/20M1387341}, improved hypergradient estimation and alternating stochastic updates~\citep{ji2021bilevel,chen2021closing}, momentum-based single-loop methods~\citep{10.1007/s10994-024-06650-8}, and variance-reduced algorithms~\citep{10.5555/3540261.3541308,10.5555/3540261.3542577,DBLP:journals/corr/abs-2105-02266}. These approaches estimate the \emph{hypergradient} $\nabla F(\w)$ by differentiating through the lower-level solution map. This typically requires second-order information, accessed through Hessian- or Jacobian-vector products (HVPs/JVPs) or approximate inverse-Hessian computations.

%A substantial body of work has developed algorithms and convergence guarantees for this problem class \citep{ghadimi2018approximation,ji2021bilevel}. A common approach estimates the \emph{hypergradient} $\nabla F(\w)$ by differentiating through the lower-level solution. This requires second-order information, typically through Hessian- or Jacobian-vector products(HVP/JVP) or approximate inverse-Hessian computations. Although these approaches need not form a full Hessian matrix, their second-order evaluations can still incur substantial computational and memory costs. This motivates \emph{fully first-order} methods, which use only ordinary stochastic gradients of $f$ and $g$ and avoid second-order oracle access altogether.

To reduce the computational and memory costs associated with second-order derivative computations, recent work has developed \emph{fully first-order} methods, which use only ordinary stochastic gradients of $f$ and $g$ and avoid second-order oracle access altogether. A prominent approach is to construct a penalty-based reformulation of the bilevel problem that can be optimized using only first-order information~\citep{3600270.3601524,kwon2023fully,pmlr-v202-shen23c}. In particular, problem~(\ref{eq:bilevel}) admits the equivalent constraint reformulation:
\begin{equation}
    \min_{\w,\y} f(\w,\y)
    \qquad\text{subject to}\qquad
    g(\w,\y)-\min_{\z}g(\w,\z)\le 0.
    \label{eq:value-constrained}
\end{equation}
Penalizing the constraint violation yields a surrogate problem:
\begin{equation}
    \min_{\w}\mathcal L^*_\lambda(\w),
    \qquad
    \mathcal L^*_\lambda(\w)
    :=\min_{\y}\left\{
    f(\w,\y)+\lambda\left[g(\w,\y)-\min_{\z}g(\w,\z)\right]
    \right\},
    \quad \lambda>0.
    \label{prob:penalty}
\end{equation}
% The key advantage is that the gradient of the surrogate objective has the
% first-order representation
% \begin{equation}
% \begin{aligned}\label{eq:penalty-gradient}
% \nabla\L^*_\lambda(\w) &= \nabla_1 \L_\lambda(\w, \y_\lambda^*(\w)) + \nabla\y_\lambda^*(\w)\underbrace{\nabla_2 \L_\lambda(\w, \y_\lambda^*(\w))}_{=\mathbf{0}}\\
% &=\nabla_1 f\big(\w,\y^*_\lambda(\w)\big)+\lambda\left[
% \nabla_1 g\big(\w,\y^*_\lambda(\w)\big) - \nabla_1 g\big(\w,\y^*(\w)\big)\right].
% \end{aligned}
% \end{equation}
Under standard regularity conditions and for sufficiently large $\lambda$, the gradient of the surrogate objective approximates the bilevel hypergradient with error
$\|\nabla\mathcal L_\lambda^*(\w)-\nabla F(\w)\|_2=O(\lambda^{-1})$~\citep{kwon2023fully}.
Consequently, choosing $\lambda=\Theta(\epsilon^{-1})$ ensures that an $O(\epsilon)$-stationary point of $\mathcal L_\lambda^*$ is also an $O(\epsilon)$-stationary point of $F$.

Despite this first-order representation, attaining sharp sample complexity with simple updates remains challenging. Existing fully first-order methods achieve $O(\epsilon^{-6})$ sample complexity under bounded-variance stochastic gradients and $O(\epsilon^{-4})$ under additional mean-square smoothness. However, the methods attaining these rates rely on nested lower-level solvers~\citep{ kwon2024complexity,JMLR:v26:23-1104,pmlr-v202-shen23c}, $\epsilon$-dependent large batch sizes~\citep{kwon2024complexity,JMLR:v26:23-1104,gu2026sgha,pmlr-v202-shen23c}, or both. Meanwhile, existing methods that combine a single loop with constant-size batches have weaker sample complexity guarantees~\citep{kwon2023fully,yang2026compositional}. This gap motivates the following open question~\citep{wright2025gradient,yang2026compositional}:
\begin{center}
\emph{Can we design a fully first-order method for NC-SC SBO that uses a single loop and constant-size batches while matching the best-known sample complexity under standard stochastic assumptions?}
\end{center}

We answer this question affirmatively with a simple algorithmic design supported by refined convergence analyses. Our method maintains $\y_t$ and $\z_t$ to track the minimizers of the penalized problem over $\y$ and original lower-level problem over $\z$ in~(\ref{prob:penalty}), respectively, taking one stochastic-gradient step for each at every iteration given $\w_t$. Inspired by the projected-coupling idea of \citet{kwon2024complexity}, we  project the provisional $\z$-iterate onto a ball of radius $O(\lambda^{-1})$ centered at the updated $\y$-iterate. This projection constrains the separation between the two iterates rather than their individual locations. We combine it with a standard exponential moving average of the stochastic gradient estimator for the upper-level variable $\w$. The projection controls the relative lower-level tracking error, while the moving average controls the upper-level gradient estimation error with a constant batch size. We refer to the proposed algorithm as SICO. 

By establishing refined contraction and sensitivity bounds for coupled lower-level tracking, we prove that SICO achieves $O(\epsilon^{-6})$ sample complexity under unbiased, bounded-variance stochastic gradients and $O(\epsilon^{-4})$ under the additional mean-square smoothness assumption on the lower-level stochastic gradients, using only $O(1)$ stochastic-gradient samples per iteration in both settings.  In addition, our results also achieve  the  best known dependence on the condition number (cf. Table~\ref{tab:baseline}).

We evaluate the proposed method on bilevel problems arising in data hyper-cleaning and regularization selection, demonstrating its effectiveness compared with existing fully first-order baselines.

% \begin{enumerate}[leftmargin=*]
%     \item We develop SICO, a single-loop, constant-batch, fully first-order penalty method that not only matches the best-known sample complexity in both the bounded-variance and mean-square smoothness settings, but also improves the dependence on the condition number (cf. Table~\ref{tab:baseline}).

%     \item We establish refined contraction and sensitivity bounds for coupled lower-level tracking, extending the analysis of projected coupling to the bounded-variance setting.
% \end{enumerate}

% We prove that the resulting algorithm, dubbed SICO, achieves $O(\epsilon^{-6})$ sample complexity under unbiased, bounded-variance stochastic gradients and $O(\epsilon^{-4})$ under the additional mean-square smoothness assumption on the lower-level stochastic gradients, using only $O(1)$ stochastic-gradient samples per iteration in both settings. Our main contributions are summarized as follows.

% \begin{enumerate}[leftmargin=*]
%     \item We develop SICO, a single-loop, constant-batch, fully first-order penalty method that not only matches the best-known sample complexity in both the bounded-variance and mean-square smoothness settings, but also improves the dependence on the condition number (cf. Table~\ref{tab:baseline}).

%     \item We establish refined contraction and sensitivity bounds for coupled lower-level tracking, extending the analysis of projected coupling to the bounded-variance setting.
% \end{enumerate}

\begin{table}[t]
    \caption{Comparison of fully first-order methods for bilevel
    optimization. The second column indicates the mean-square smoothness (MSS) of the stochastic gradients (cf. Assumption~\ref{asm:mss}).
    % N/A denotes not applicable. For the deterministic method,    complexity counts exact gradient evaluations.
    }
    \label{tab:baseline}
    \centering
    \begin{tabular}{lcccc}
        \hline
        Algorithm
        & \shortstack{MSS}
        & Loop
        & Batch size
        & Sample complexity \\
        \hline
        F2SA~\citep{kwon2023fully}
        & No & Single & $O(1)$
        & $O(\operatorname{poly}(\kappa)\epsilon^{-7})$ \\

        \citet{yang2026compositional}& No & Single & $O(1)$
        & $O(\kappa^{15}\epsilon^{-7})$ \\
        \citet{kwon2024complexity}
        & No & Double & $O(\epsilon^{-4})$
        & $O(\operatorname{poly}(\kappa)\epsilon^{-6})$ \\

        F2BSA~\citep{JMLR:v26:23-1104}
        & No & Double & $O(\epsilon^{-4})$
        & $\widetilde{O}(\kappa^{12}\epsilon^{-6})$ \\

        F3BSA~\citep{JMLR:v26:23-1104}
        & No & Double & $O(\kappa)$
        & $\widetilde{O}(\kappa^{11}\epsilon^{-6})$ \\

        Stoc-SGHA~\citep{gu2026sgha} %Theorem 9
        & No & Single & $O(\epsilon^{-4})$
        & $O(\kappa^{17}\epsilon^{-6})$ \\

       \textbf{SICO (Theorem~\ref{thm:std})}
    & \textbf{No}
    & \textbf{Single}
    & {\boldmath $O(1)$}
    & {\boldmath $O(\kappa^{11}\epsilon^{-6})$} \\
        \hline
        F3SA~\citep{kwon2023fully}
        & Yes & Single & $O(1)$
        & $O(\operatorname{poly}(\kappa)\epsilon^{-5})$ \\
        
        \citet{kwon2024complexity}
        & Yes & Double & $O(\epsilon^{-2})$
        & $O(\operatorname{poly}(\kappa)\epsilon^{-4})$ \\
        
        Stoc-SGHA~\citep{gu2026sgha} %Theorem 11
        & Yes & Single & $O(\epsilon^{-2})$
        & $O(\kappa^{11}\epsilon^{-4})$ \\
        
        \textbf{SICO (Theorem~\ref{thm:mss})}
    & \textbf{Yes}
    & \textbf{Single}
    & {\boldmath $O(1)$}
    & {\boldmath $O(\kappa^{9}\epsilon^{-4})$} \\

        %Ours (deterministic)
        %& N/A & Single & N/A
        %& $O(\epsilon^{-2})$ \\
        \hline
    \end{tabular}
%\vspace*{-0.2in}
\end{table}
\section{Related Work}

%\noindent{\bf Historical development of penalty methods.}
Penalty methods for bilevel optimization build on classical value-function reformulations and associated exact-penalty analyses~\citep{Ye1995OptimalityCF}. These formulations express lower-level optimality through a constraint on the lower-level optimality gap. More recent developments include~\citep{3600270.3601524,pmlr-v202-shen23c,doi:10.1137/23M1566753}. Building on this broader literature, recent research on NC--SC SBO has focused on controlling the approximation error and stochastic noise introduced by penalization.

%\noindent{\bf Fully first-order penalty methods for NC--SC SBO.}
\citet{kwon2023fully} introduced F2SA, a fully first-order penalty method that avoids Jacobian and Hessian evaluations. It uses simple stochastic gradient updates for all variables.  Its established sample complexity is $O(\epsilon^{-7})$ in the fully stochastic setting. Their F3SA variant incorporates STORM-style gradient estimators for both the upper- and lower-level variables, improving the complexity to $O(\epsilon^{-5})$ under additional mean-square smoothness for both the upper and lower level stochastic gradients while using a single loop and constant-size batches.

\citet{kwon2024complexity} established two improved complexity bounds: $O(\epsilon^{-6})$ under bounded-variance stochastic gradients and $O(\epsilon^{-4})$ under additional mean-square smoothness. Their method uses a double-loop structure with batch sizes of order $\epsilon^{-4}$ and $\epsilon^{-2}$ in the two settings, respectively. The analysis exploits the proximity of the original and penalized lower-level solutions through projection and, under mean-square smoothness. These ideas motivate our algorithm, while our refined tracking analysis enables single-loop updates with constant-size batches.

\citet{JMLR:v26:23-1104} showed that the smoothness constant of the reduced penalty objective $\L_\lambda^*$ can be bounded independently of $\lambda$. Their F2BSA method combines inner solvers with large batch sizes to attain $\widetilde O(\kappa^{12}\epsilon^{-6})$ sample complexity in the fully stochastic setting. Their F3BSA variant uses multilevel Monte Carlo and single-sample stochastic updates to attain $\widetilde O(\kappa^{11}\epsilon^{-6})$ complexity. However, constructing its gradient estimator still requires nested stochastic solvers and a batch size in the order of $O(\kappa)$. 

Using an equivalent min--max reformulation of the penalty problem~(\ref{prob:penalty}), \citet{yang2026compositional} developed a single-loop stochastic momentum gradient descent--ascent method with constant-size batches. Its established sample complexity under bounded variance is $O(\epsilon^{-7})$, leaving a gap relative to the $O(\epsilon^{-6})$ dependence achieved by nested-loop methods.

%\noindent{\bf Concurrent single-loop methods using finite differences.}
In concurrent work, \citet{gu2026sgha} proposed SGHA and its stochastic counterpart, Stoc-SGHA, based on a regularized Lagrangian formulation of the lower-level stationarity constraints. These methods approximate Hessian--vector products using finite differences of first-order gradients. Stoc-SGHA achieves expected oracle complexities of $O(\kappa^{17}\epsilon^{-6})$ under bounded variance and $O(\kappa^{11}\epsilon^{-4})$ under lower-level mean-square smoothness. However, its stated in-expectation guarantees additionally assume that the iterates are almost surely bounded and use batch sizes of order $\epsilon^{-4}$ and $\epsilon^{-2}$, respectively. Thus, its single-loop structure still entails an $\epsilon$-dependent per-iteration sampling cost. In contrast, SICO operates directly on stochastic gradients and does not require finite-difference approximations or an additional almost-sure bounded-iterate assumption and has sharper dependence on the condition mumber. Table~\ref{tab:baseline} summarizes these comparisons.

\section{Preliminaries}
\noindent{\bf Notation.} We use $\nabla_1f(\w, \y)$ and $\nabla_2 f(\w,\y)$ to denote gradients with respect to the first and second arguments, respectively, and $\|\cdot\|$ to denote the Euclidean norm. Let $\Pi_{\mathbb{B}(\c,r)}(\u)$ denote the Euclidean projection of $\u$ onto  a closed Euclidean ball $\mathbb{B}(\c,r)=\{\u:\|\u-\c\|\le r\}$. We write $\E_t[\cdot]$ for the conditional expectation given the history prior to drawing the random samples at iteration $t$. A possibly random point $\w$ is called $\epsilon$-stationary if $\E[\|\nabla F(\w)\|^2]\le\epsilon^2$. A function $f$ is $C$-Lipschitz continuous if $|f(\x)-f(\y)|\le C\|\x-\y\|$ for all $\x,\y$. It is $L$-smooth if it is differentiable and $\|\nabla f(\x)-\nabla f(\y)\|\le L\|\x-\y\|$ for all $\x,\y$. A differentiable function $g$ is $\mu_g$-strongly convex if $g(\y)\ge g(\x)+\nabla g(\x)^\top(\y-\x)+\frac{\mu_g}{2}\|\y-\x\|^2$ for all $\x,\y$. A mapping $H(\s):\R^d\rightarrow \R^d$ is $\mu$-strongly monotone if $(H(\s)-H(\s'))^{\top}(\s - \s')\geq \mu\|\s - \s'\|^2$. 

\subsection{Assumption}
We make the following standard assumptions regarding the problem~(\ref{eq:bilevel}). 
\begin{asm}\label{asm:func}Regarding $f, g$ and $F$, we assume:\\
    $\bullet$ $f(\w, \y)$ is $C_f$-Lipschitz in $\y$ and jointly $L_f$-smooth in terms of $(\w, \y)$\\
    $\bullet$ $g(\w, \y)$ is jointly $L_g$-smooth and $\mu_g$-strongly convex in terms of $\y$ for any $\w$\\
    $\bullet$ $f$ is twice continuously differentiable, and $\nabla^2 g(\w, \y)$ is jointly $Q_g$-Lipschitz continuous.\\
    $\bullet$ $F(\w)$ is lower bounded, i.e., $\inf_{\w\in\R^{d_w}} F(\w)> -\infty$
\end{asm}
\begin{asm}\label{asm:oracle}There exist stochastic gradients of $f$ and $g$ that are unbiased and have bounded variance $\sigma_f^2, \sigma_g^2$, respectively, i.e.
    \begin{align}
        \E_{\xi_f} [\nabla f(\w, \y, \xi_f)] = \nabla f(\w, \y)&, \quad\E_{\xi_g} [\nabla g(\w, \y, \xi_g)] = \nabla g(\w, \y)\nonumber\\
        \E_{\xi_f}\normsq{\nabla f(\w, \y, \xi_f) - \nabla f(\w, \y)} \leq \sigma_f^2&, \quad \E_{\xi_g}\normsq{\nabla g(\w, \y, \xi_g) - \nabla g(\w, \y)} \leq \sigma_g^2.
    \end{align}
\end{asm}
For deriving improved convergence rate, we additionally consider the following mean-square smoothness (MSS) assumption regarding stochastic gradient of $g$.
\begin{asm}\label{asm:mss}  $g$ is mean-square smooth in $\y$, i.e.
\begin{align}
    \E_{\xi_g}\normsq{\nabla g(\w, \y_1, \xi_g) - \nabla g(\w, \y_2, \xi_g)} \leq \tilde L_g^2 \normsq{\y_1 - \y_2}.
\end{align}
\end{asm}
We follow the tradition from~\citep{JMLR:v26:23-1104} and define $\kappa\coloneqq \bar{\ell}/\mu_g$ where $\bar{\ell} \coloneqq \max\{C_f, L_f, L_g, Q_g\}$. When lower-level objective is also $\tilde L_g$-MSS, we slightly abuse the notation and define $\bar{\ell} \coloneqq \max\{C_f, L_f, L_g, \tilde L_g, Q_g\}$.

\subsection{Penalty Surrogate}
Recall the penalized problem in~(\ref{prob:penalty}). For any $\w$, we denote the unique minimizer of the strongly convex lower-level problem by
\begin{align}
\y^*(\w)\coloneqq\argmin_{\z}g(\w,\z). %,\qquad g^*(\w)\coloneqq\min_{\y}g(\w,\y)=g(\w,\y^*(\w)).
\end{align}
For a penalty parameter $\lambda>0$, define
\begin{align}
\L_\lambda(\w,\y)\coloneqq f(\w,\y)+\lambda\left(g(\w,\y)-\min_\z g(\w, \z)\right). 
\end{align}
When $\lambda\geq 2L_f/\mu_g$, $\lambda^{-1}f(\w,\cdot)+g(\w,\cdot)$ is $(\mu_g - \frac{L_f}{\lambda})$-strongly convex and we define  $\y_\lambda^*(\w)$ as the unique minimizer of the penalized function $\L_\lambda(\w,\y)$ with respect to the lower-level variable given $\w$, i.e.,
\begin{align}
\y_\lambda^*(\w)\coloneqq\argmin_{\y}\L_\lambda(\w,\y)
=\argmin_{\y}\left\{\lambda^{-1}f(\w,\y)+g(\w,\y)\right\}.
\label{def:penal_ll_sol}
\end{align}
 The penalized objective introduced in~(\ref{prob:penalty}) can then be written as
\begin{align}
\L_\lambda^*(\w)\coloneqq\L_\lambda(\w,\y_\lambda^*(\w))
=f(\w,\y_\lambda^*(\w))+\lambda\left(g(\w,\y_\lambda^*(\w))-g(\w,\y^*(\w))\right).
% \label{eq}
\end{align}
 Moreover, 
% \begin{align}
% \nabla\L_\lambda^*(\w)=\nabla_1f(\w,\y_\lambda^*(\w))+\lambda\left(\nabla_1 g(\w,\y_\lambda^*(\w))-\nabla_1g(\w,\y^*(\w))\right),
% \label{eq}
% \end{align}

\begin{equation}
\begin{aligned}\label{eq:penalty-gradient}
\nabla\L^*_\lambda(\w) &= \nabla_1 \L_\lambda(\w, \y_\lambda^*(\w)) + \nabla\y_\lambda^*(\w)\underbrace{\nabla_2 \L_\lambda(\w, \y_\lambda^*(\w))}_{=\mathbf{0}}\\
&=\nabla_1 f\big(\w,\y^*_\lambda(\w)\big)+\lambda\left[
\nabla_1 g\big(\w,\y^*_\lambda(\w)\big) - \nabla_1 g\big(\w,\y^*(\w)\big)\right],
\end{aligned}
\end{equation}
which involves only first-order derivatives once $\y_\lambda^*(\w)$ and $\y^*(\w)$ are available as described in the previous section. %This representation motivates the two lower-level estimators used in Section~\ref{sec}.

We next present the approximation and smoothness properties of the penalty surrogate as established in previous work. %These properties justify optimizing \(\mathcal L_\lambda^*\) in place of \(F\) and will be used in the convergence analysis.
\begin{lem}[Lemma 4.1 of~\citet{JMLR:v26:23-1104}]
\label{lem:penal_property}
Under Assumption~\ref{asm:func}, for $\lambda\ge 2L_f/\mu_g$, it holds for $\forall \w\in\R^{d_w}$that
\begin{align}
% \Norm{\y^*_\lambda(\w)-\y^*(\w)} &\leq \frac{C_f}{\mu_g\lambda} \eqqcolon r_\lambda, \label{eq}\\
&|\L^*_\lambda(\w) - F(\w)| = O\left(\frac{\bar{\ell}\kappa^2}{\lambda}\right),
\quad \Norm{\nabla\L^*_\lambda(\w)-\nabla F(\w)} = O\left(\frac{\bar{\ell}\kappa^3}{\lambda}\right),
\label{eq:obj_grad_surrogate_approx}\\
&\Norm{\nabla\L^*_\lambda(\w_1) - \nabla\L^*_\lambda(\w_2)} \leq L_*\Norm{\w_1-\w_2}, \text{ where } L_*=O(\bar{\ell}\kappa^3).
\label{eq:sm_surrogate}
\end{align}
\end{lem}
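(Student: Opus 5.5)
The plan is to derive all three bounds from a single estimate: the distance between the penalized and original lower-level solutions, together with their sensitivities in $\w$. Write $\y_\lambda:=\y^*_\lambda(\w)$ and $\y^*:=\y^*(\w)$.

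\textbf{Step 1: distance between the two solutions.} The optimality conditions are $\nabla_2 g(\w,\y^*)=0$ and $\lambda^{-1}\nabla_2 f(\w,\y_\lambda)+\nabla_2 g(\w,\y_\lambda)=0$. By $\mu_g$-strong monotonicity of $\nabla_2 g(\w,\cdot)$,
\[
\mu_g\|\y_\lambda-\y^*\|\le\|\nabla_2 g(\w,\y_\lambda)\|=\lambda^{-1}\|\nabla_2 f(\w,\y_\lambda)\|\le C_f/\lambda .
\]
Hence $\|\y_\lambda-\y^*\|\le C_f/(\mu_g\lambda)=O(\kappa/\lambda)$.

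\textbf{Step 2: value gap.} Since $\y^*$ minimizes $g(\w,\cdot)$, we have $g(\w,\y_\lambda)-g(\w,\y^*)\le \frac{L_g}{2}\|\y_\lambda-\y^*\|^2$. Combining this with the $C_f$-Lipschitzness of $f$ in $\y$ gives
\[
|\L^*_\lambda-F|\le C_f\|\y_\lambda-\y^*\|+\frac{\lambda L_g}{2}\|\y_\lambda-\y^*\|^2=O(\bar\ell\kappa/\lambda+\bar\ell\kappa^2/\lambda),
\]
which is the first bound in~(\ref{eq:obj_grad_surrogate_approx}).

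\textbf{Step 3: gradient gap.} Compare~(\ref{eq:penalty-gradient}) with $\nabla F=\nabla_1 f(\w,\y^*)-\nabla^2_{12}g\,[\nabla^2_{22}g]^{-1}\nabla_2 f$, all evaluated at $(\w,\y^*)$.

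\emph{Penalty term.} Taylor-expand
\[
\lambda[\nabla_1 g(\w,\y_\lambda)-\nabla_1 g(\w,\y^*)]=\lambda\nabla^2_{12}g(\w,\y^*)(\y_\lambda-\y^*)+O(\lambda Q_g\|\y_\lambda-\y^*\|^2).
\]

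\emph{Leading term of $\y_\lambda-\y^*$.} Expand the penalized optimality condition:
\[
\nabla^2_{22}g(\w,\y^*)(\y_\lambda-\y^*)=-\lambda^{-1}\nabla_2 f(\w,\y_\lambda)+O(Q_g\|\y_\lambda-\y^*\|^2).
\]
Therefore
\[
\lambda(\y_\lambda-\y^*)=-[\nabla^2_{22}g]^{-1}\nabla_2 f(\w,\y^*)+O\bigl(\mu_g^{-1}L_f\|\y_\lambda-\y^*\|+\lambda\mu_g^{-1}Q_g\|\y_\lambda-\y^*\|^2\bigr).
\]

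\emph{Collecting errors.} Multiply by $\nabla^2_{12}g$, which has norm at most $L_g$, and add $\nabla_1 f(\w,\y_\lambda)-\nabla_1 f(\w,\y^*)=O(L_f\kappa/\lambda)$. The dominant remaining error is $L_g\mu_g^{-1}\lambda Q_g(C_f/(\mu_g\lambda))^2=O(\bar\ell\kappa^3/\lambda)$, which gives the second bound.

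\textbf{Step 4: smoothness of $\L^*_\lambda$, independent of $\lambda$.} This is the main obstacle. A naive bound on~(\ref{eq:penalty-gradient}) gives a Lipschitz constant of order $\lambda L_g\kappa$, so I will not bound the two terms separately. Instead I will write
\[
\nabla\L^*_\lambda(\w)=\nabla_1 f(\w,\y_\lambda)+\lambda\int_0^1\nabla^2_{12}g\bigl(\w,\y^*+s(\y_\lambda-\y^*)\bigr)\,ds\;(\y_\lambda-\y^*),
\]
and show that both factors of the second term are Lipschitz in $\w$ with $\lambda$-free constants.

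\emph{Lipschitzness of $\lambda(\y_\lambda-\y^*)$.} By the implicit function theorem, $\nabla\y^*=-\nabla_{12}g\,[\nabla_{22}g]^{-1}$. Likewise $\nabla\y_\lambda$ is the same expression with Hessians of $g+\lambda^{-1}f$ at $\y_\lambda$. Their difference is $O(\kappa^2\cdot(\|\y_\lambda-\y^*\|Q_g+L_f/\lambda)/\ldots)$, so $\|\nabla\y_\lambda-\nabla\y^*\|=O(\kappa^3/\lambda)$ up to $\bar\ell$ factors. Multiplying by $\lambda$ makes $\lambda(\y_\lambda-\y^*)$ Lipschitz with constant $O(\kappa^3)$, and its norm is bounded by $O(\kappa)$.

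\emph{Lipschitzness of the integrated cross-Hessian.} This factor is $Q_g$-Lipschitz along $(\w,\y^*,\y_\lambda)$, and $\y^*,\y_\lambda$ are $O(\kappa)$-Lipschitz in $\w$.

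\emph{Product rule.} Combining the two factors, together with the $L_f(1+\kappa)$-Lipschitzness of $\nabla_1 f(\w,\y_\lambda(\w))$, yields $L_*=O(\bar\ell\kappa^3)$. The delicate part is tracking the $\kappa$ powers in the difference of the two implicit Jacobians. I would first carry this out carefully, using $\lambda\ge 2L_f/\mu_g$ so that the strong convexity modulus of $\lambda^{-1}f+g$ is at least $\mu_g/2$.
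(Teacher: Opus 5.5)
Your proposal is correct and follows essentially the same route as the result the paper cites. The paper gives no proof of its own and only reproduces the ingredients: your Step 1 is Lemma~\ref{lem:ll_sol_prox}, and Step 3 is the standard Taylor expansion of both optimality conditions around $\y^*(\w)$. Step 4 rests on the fact the paper imports as Lemma~\ref{lem:s_lip}, namely that $\y^*_\lambda-\y^*$ is $C_s/\lambda$-Lipschitz with $C_s=O(\kappa^3)$ independent of $\lambda$.

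Your loosely stated Jacobian-difference estimate does close. Let $M,N$ be the cross and $\y\y$ Hessians of $g$ at $\y^*$, and $M_\lambda,N_\lambda$ those of $g+\lambda^{-1}f$ at $\y^*_\lambda$. Then $\nabla\y^*_\lambda-\nabla\y^*=-(M_\lambda-M)N^{-1}-M_\lambda N_\lambda^{-1}(N-N_\lambda)N^{-1}$, which gives the bound $\frac{1}{\lambda}\bigl(L_f+\frac{C_fQ_g}{\mu_g}\bigr)\frac{1+C_{y*}}{\mu_g}=O(\kappa^3/\lambda)$. Your product rule then yields $L_*\le L_f(1+C_{y*})+L_g\cdot O(\kappa^3)+Q_g\,O(\kappa)\cdot\frac{C_f}{\mu_g}=O(\bar\ell\kappa^3)$.
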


We make two remarks regarding this lemma. First, choosing $\lambda=\Omega(\bar{\ell}\kappa^3/\epsilon)$ ensures that the gradient of the penalty objective approximates the true hypergradient to within $O(\epsilon)$. Consequently, any $O(\epsilon)$-stationary point of $\L_\lambda^*$ is also an $O(\epsilon)$-stationary point of $F$. Second, although $\lambda$ must increase as $\epsilon$ decreases, the smoothness constant $L_*$ of the reduced penalty objective does \emph{not} grow with $\lambda$. Thus, increasing the penalty parameter does not, by itself, require a smaller outer step size on smoothness grounds. The main challenge posed by a large $\lambda$ lies in stochastic gradient estimation, where the penalty factor can amplify the estimation error.

\section{A penalty-based first-order single loop method}
In this section, we present our stochastic single-loop constant-batch first-order method. We denote by $\B_{fw}^t, \B_{fy}^t$ independent batches for evaluating stochastic gradients of $f(\w, \y)$ at the $t$-th iteration. Similarly, denote  by $\B_{gw}^t, \B_{gy}^t$ independent batches for evaluating stochastic gradients of $g(\w, \y)$ at the $t$-th iteration. We assume $|\B_{fw}^t|=|\B_{fy}^t| =|\B_{gw}^t| = |\B_{gy}^t|=B=O(1)$. 

\subsection{The Algorithm}
We begin by motivating the proposed algorithm. By equation~(\ref{eq:penalty-gradient}), the gradient of $\L_\lambda^*(\w)$ has the compositional form $\mathcal M(\w,\y^*(\w),\y_\lambda^*(\w))$, where $\mathcal M$ is Lipschitz continuous and $\y^*(\w)$ and $\y_\lambda^*(\w)$ are solutions to strongly convex optimization problems. This structure fits the compositional gradient framework discussed in~\cite[Section~4.5]{yang2026compositional}, which provides a general scheme for estimating such gradients.

To estimate the gradient in equation~(\ref{eq:penalty-gradient}), we maintain two sequences, $\y_{t+1}$ and $\z_{t+1}$, to track $\y_\lambda^*(\w_t)$ and $\y^*(\w_t)$, respectively. Substituting these iterates for the corresponding optimal solutions and replacing the full gradients with stochastic estimates yields
\begin{align}
    \hat G_t^w
    \coloneqq \nabla_1 f(\w_t,\y_{t+1},\B_{fw}^t)
    +\lambda\left(
    \nabla_1 g(\w_t,\y_{t+1},\B_{gw}^t)
    -\nabla_1 g(\w_t,\z_{t+1},\B_{gw}^t)
    \right).
    \label{eq:upper_grad}
\end{align}
Because the iterates $\y_{t+1}, \z_{t+1}$ only approximate their respective targets, this estimator is generally biased:
$G_t^w\coloneqq\E_{\B^t_{fw}, \B^t_{gw}}[\hat G_t^w]\neq\nabla\L_\lambda^*(\w_t)$.
To reduce stochastic fluctuations, we use an exponential moving average with $\beta\in(0,1)$,
\begin{align}
    \v_{t+1}\coloneqq(1-\beta)\v_t+\beta\hat G_t^w,
\end{align}
and update the primal variable by
\begin{align}
    \w_{t+1}\coloneqq\w_t-\eta\v_{t+1}.
    \label{eq:mmt_update}
\end{align}

Controlling the {\bf tracking-induced squared bias} $\|G_t^w-\nabla\L_\lambda^*(\w_t)\|^2$, however, requires suitable updates for the inner iterates. 
A natural choice, following~\citep{yang2026compositional}, is to perform a single SGD step for each inner problem at every iteration. However, when $\lambda$ scales as $1/\epsilon$, this approach yields a complexity of $O(\epsilon^{-7})$. The bottleneck is controlling the squared bias $\|G_t^w-\nabla\L_\lambda^*(\w_t)\|^2$ despite the large penalty parameter. Specifically, for $\lambda\geq 1$, a direct Lipschitz bound gives \begin{align*} \|G_t^w-\nabla\L_\lambda^*(\w_t)\|^2 \leq O(\lambda^2)\left( \|\y_{t+1}-\y_\lambda^*(\w_t)\|^2 +\|\z_{t+1}-\y^*(\w_t)\|^2 \right). \end{align*} This bound amplifies both tracking errors by a factor of $O(\lambda^2)$, requiring increasingly accurate inner solutions as $\epsilon$ decreases and thereby worsening the overall complexity.

\paragraph{Projected lower-level tracking.}
Inspired by~\citet{kwon2024complexity}, our algorithm exploits the proximity of $\y_\lambda^*(\w)$ and $\y^*(\w)$. Specifically, for every $\w$, we have (see Appendix~\ref{sec:lem})
\begin{align}
    \Norm{\y_\lambda^*(\w)-\y^*(\w)}
    \leq r_\lambda\coloneqq \frac{C_f}{\mu_g\lambda}.
    \label{eq:lower_sol_gap}
\end{align}
We use this bound to couple the updates of $\y$ and $\z$. After taking an SGD step for each variable, we project the updated $\z$ onto the ball of radius $r_\lambda$ centered at the updated $\y$. The resulting updates are
\begin{align}
    &\hat G_t^y
    = \lambda^{-1}\nabla_2 f(\w_t,\y_t,\B_{fy}^t)
    +\nabla_2 g(\w_t,\y_t,\B_{gy}^t), \qquad
    \hat G_t^z
    = \nabla_2 g(\w_t,\z_t,\B_{gy}^t),
    \label{eq:lower_grad}\\
    &\y_{t+1} = \y_t-\gamma\hat G_t^y,\qquad
    \bar{\z}_t = \z_t-\gamma\hat G_t^z,
    \label{eq:lower_update}\\
    &\z_{t+1}
    = \Pi_{\Bb(\y_{t+1},r_\lambda)}(\bar{\z}_t).
    \label{eq:z_proj}
\end{align}

Combining these projected lower-level updates with the momentum update of $\w$ yields our algorithm, SICO, presented in Algorithm~\ref{alg:s2f2bo}. Compared with~\citep{kwon2024complexity}, our main algorithmic modification is to apply this projection at every iteration and combine it with momentum-based gradient estimation. This yields a single-loop method that requires neither inner loops nor large batches supported by our novel analysis.

We next present convergence guarantees for SICO with and without the mean-square smoothness assumption on $g$, and then highlight the novelty of our analysis.

\begin{algorithm}[t]
    \caption{Stochastic Single-loop Fully First-order Penalty Method (SICO) for SBO}
    \label{alg:s2f2bo}
    \begin{algorithmic}[1]
        \State \textbf{Input:} $T$, $\lambda$, $\w_0=\w_{-1}$, $\y_0 = \y_{-1}$, $\z_0 = \Pi_{\Bb(\y_0,r_\lambda)}(\z_{-1})$, $\v_0 = \textbf{0}$, $\eta$, $\gamma$, $\beta$, $r_\lambda=C_f/(\mu_g\lambda)$. 
        % \State Set $\y_0 = \y_\text{raw}, \z_0 = \Pi_{\Bb(\y_0,r_\lambda)}(\z_\text{raw})$
        \For{$t=0,1,\cdots,T-1$}
            \State Sample independent constant-size batches $\B_{fy}^t,\B_{gy}^t,\B_{fw}^t,\B_{gw}^t$% from $\D_f,\D_g$, correspondingly.
            \State Compute $\hat G_t^y=\lambda^{-1}\nabla_2 f(\w_t,\y_t,\B_{fy}^t)+\nabla_2 g(\w_t,\y_t,\B_{gy}^t)$ and $\hat G_t^z=\nabla_2 g(\w_t,\z_t,\B_{gy}^t)$.
            \State Update $\y_{t+1}=\y_t-\gamma\hat G_t^y$ and $\bar{\z}_t=\z_t-\gamma\hat G_t^z$.
            \State Project $\z_{t+1}=\Pi_{\Bb(\y_{t+1},r_\lambda)}(\bar{\z}_t)$.
            \State Compute $\hat G_t^w=\nabla_1 f(\w_t,\y_{t+1},\B_{fw}^t)+\lambda\left(\nabla_1 g(\w_t,\y_{t+1},\B_{gw}^t)-\nabla_1 g(\w_t,\z_{t+1},\B_{gw}^t)\right)$.
            \State Update $\v_{t+1}=(1-\beta)\v_t+\beta\hat G_t^w$ and $\w_{t+1}=\w_t-\eta\v_{t+1}$.
        \EndFor
        \State \textbf{Output:} $\w_\tau$ with $\tau$ uniformly sampled from $\{0,1,\cdots,T-1\}$.
    \end{algorithmic}
\end{algorithm}

\subsection{Convergence under standard bounded variance}
We first consider Assumptions~\ref{asm:func} and~\ref{asm:oracle}, without imposing mean-square smoothness on the stochastic gradient of $g$. The following theorem shows that Algorithm~\ref{alg:s2f2bo} matches the $O(\epsilon^{-6})$ dependence of the best known first order methods while using a single loop and only a constant batch size.
\begin{thm}\label{thm:std}
Under Assumptions~\ref{asm:func} and~\ref{asm:oracle}, let the batch size be $B=O(1)$. With a warm-up stage for $\y$ such that the initialization satisfies the condition that $\E[\|\y_0 - \y^*_\lambda(\w_0)\|^2]\leq O(1/\kappa)$, by setting
\[
    \lambda=\Theta\left(\frac{\kappa^3}{\epsilon}\right),\qquad
    \beta=\Theta\left(\frac{\epsilon^4}{\kappa^6}\right),\qquad
    \gamma=\Theta\left(\frac{\epsilon^4}{\kappa^7}\right),\qquad
    \eta=\Theta\left(\frac{\epsilon^4}{\kappa^{11}}\right),
\]
Algorithm~\ref{alg:s2f2bo} returns $\w_\tau$ satisfying
\[
    \E\normsq{\nabla F(\w_\tau)}\leq\epsilon^2
\]
within $T=O(\kappa^{11}\epsilon^{-6})$ stochastic first-order oracle calls, where $\tau$ is uniformly sampled from $\{0, 1, \cdots, T-1\}$.
\end{thm}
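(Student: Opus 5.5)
We run a potential-function (Lyapunov) argument on the surrogate $\L_\lambda^*$. By Lemma~\ref{lem:penal_property}, $\L_\lambda^*$ is $L_*$-smooth with $L_*=O(\bar\ell\kappa^3)$ independent of $\lambda$. Also $\|\nabla\L_\lambda^*-\nabla F\|=O(\bar\ell\kappa^3/\lambda)=O(\epsilon)$ for $\lambda=\Theta(\kappa^3/\epsilon)$. So it suffices to prove $\frac1T\sum_t\E\|\nabla\L_\lambda^*(\w_t)\|^2\le\epsilon^2/4$.

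\textbf{Step 1: descent with moving average.} From $L_*$-smoothness and the update $\w_{t+1}=\w_t-\eta\v_{t+1}$, for $\eta\le 1/(2L_*)$ the standard momentum descent lemma gives
\[
\L_\lambda^*(\w_{t+1})\le\L_\lambda^*(\w_t)-\tfrac{\eta}{2}\|\nabla\L_\lambda^*(\w_t)\|^2-\tfrac{\eta}{4}\|\v_{t+1}\|^2+\tfrac{\eta}{2}\|\v_{t+1}-\nabla\L_\lambda^*(\w_t)\|^2 .
\]
Write $G_t^w=\E_t[\hat G_t^w]$. The EMA recursion then yields
\[
\E\|\v_{t+1}-\nabla\L_\lambda^*(\w_t)\|^2\le(1-\beta)\E\|\v_t-\nabla\L_\lambda^*(\w_{t-1})\|^2+\tfrac{2L_*^2\eta^2}{\beta}\E\|\v_t\|^2+2\beta\,\E\|G_t^w-\nabla\L_\lambda^*(\w_t)\|^2+\beta^2\,\mathrm{Var}_t .
\]
Here the variance term is $\mathrm{Var}_t=O(\sigma_f^2+\lambda^2\sigma_g^2)/B$. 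Because of the $\lambda^2$ factor, this term forces $\beta\lambda^2\lesssim\epsilon^2$, i.e.\ $\beta=\Theta(\epsilon^4/\kappa^6)$. The bias splits as
\[
\|G_t^w-\nabla\L_\lambda^*\|^2\lesssim L_f^2\|\y_{t+1}-\y_\lambda^*\|^2+\lambda^2\|\nabla_1g(\y_{t+1})-\nabla_1g(\z_{t+1})-\nabla_1g(\y^*_\lambda)+\nabla_1g(\y^*)\|^2 .
\]

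\textbf{Step 2: the key bias bound, and the main obstacle.} A naive Lipschitz bound on the second term costs $\lambda^2$ times the tracking errors, which gives only $\epsilon^{-7}$. Instead we use the projection: $\|\y_{t+1}-\z_{t+1}\|\le r_\lambda$ and $\|\y^*_\lambda-\y^*\|\le r_\lambda$ by (\ref{eq:lower_sol_gap}). Taylor-expanding $\nabla_1 g$ with the $Q_g$-Lipschitz Hessian, the difference-of-differences equals $\nabla^2_{12}g\,[(\y_{t+1}-\z_{t+1})-(\y^*_\lambda-\y^*)]$ plus a remainder of order $Q_g r_\lambda(\|\y_{t+1}-\y^*_\lambda\|+r_\lambda)$. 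Multiplied by $\lambda^2$, the remainder is $O(\kappa^2)\|\y_{t+1}-\y_\lambda^*\|^2+O(\bar\ell^2\kappa^4/\lambda^2)$, which is harmless.

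So the real task is to control the \emph{relative} error $\e_t\coloneqq(\y_t-\z_t)-(\y^*_\lambda(\w_{t-1})-\y^*(\w_{t-1}))$ at scale $\lambda^{-1}$. Concretely, we need $\lambda^2\E\|\e_{t+1}\|^2=O(\epsilon^2)$ on average.

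\begin{itemize}
\item \emph{Nonexpansiveness of the projection.} Projection onto $\Bb(\y_{t+1},r_\lambda)$ is nonexpansive with respect to the point $\y_{t+1}-(\y_\lambda^*-\y^*)$, which lies in the ball. Hence $\|\e_{t+1}\|\le\|(\y_{t+1}-\bar\z_t)-(\y^*_\lambda-\y^*)\|$.
\item \emph{Contraction.} The unprojected difference evolves by $\gamma$ times $\nabla_2g(\y_t)-\nabla_2g(\z_t)+\lambda^{-1}\nabla_2 f(\y_t)$. By strong monotonicity (Hessian-linearization, again with $Q_g$ remainder), this contracts $\e_t$ by a factor $(1-\gamma\mu_g/2)$.
\item \emph{Noise.} The common batch $\B^t_{gy}$ enters both $\y$ and $\z$, so the noise on the difference is $\gamma^2\|\nabla_2 g(\y_t,\xi)-\nabla_2 g(\z_t,\xi)\|^2$. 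Without mean-square smoothness we cannot exploit that $\y_t$ and $\z_t$ are close, and we can only bound this by $2\gamma^2\sigma_g^2/B$.
\item \emph{Drift.} The targets move with $\w$. Sensitivity of $\y^*_\lambda-\y^*$ in $\w$ is $O(\kappa^2/\lambda)$, since its derivative is $\lambda^{-1}$ times a smooth map. So the drift is $O(\kappa^2\eta\|\v_{t+1}\|/\lambda)$.
\end{itemize}

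Combining these gives
\[
\E\|\e_{t+1}\|^2\le(1-\tfrac{\gamma\mu_g}{2})\E\|\e_t\|^2+O(\gamma^2\sigma_g^2)+O(\tfrac{\kappa^4\eta^2}{\gamma\mu_g\lambda^2})\E\|\v_t\|^2+O(\gamma\kappa^2)\E\|\y_t-\y_\lambda^*\|^2+\text{lower-order terms}.
\]
The steady-state level is $\lambda^2\gamma\sigma_g^2/\mu_g$, so requiring it to be $O(\epsilon^2)$ forces $\gamma=\Theta(\epsilon^4/\kappa^7)$. A standard SGD tracking recursion for $\|\y_t-\y^*_\lambda(\w_{t-1})\|^2$ uses the $(\mu_g/2)$-strong convexity, with drift $O(\kappa^2\eta^2\|\v\|^2/\gamma)$. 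The warm-start condition $\E\|\y_0-\y^*_\lambda(\w_0)\|^2\le O(1/\kappa)$ handles the transient of the $\kappa^2$-weighted term. The initial relative error is already at scale $r_\lambda$ because $\z_0$ is projected.

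\textbf{Step 3: assembling the potential.} Define
\[
\Phi_t=\L_\lambda^*(\w_t)+\tfrac{c_1\eta}{\beta}\|\v_t-\nabla\L_\lambda^*(\w_{t-1})\|^2+\tfrac{c_2\eta\lambda^2}{\gamma\mu_g}\|\e_t\|^2+\tfrac{c_3\eta\kappa^2}{\gamma\mu_g}\|\y_t-\y^*_\lambda\|^2 .
\]
The $-\frac{\eta}{4}\|\v_{t+1}\|^2$ term absorbs all $\|\v\|^2$ drift terms. This requires $\eta^2L_*^2/\beta^2\lesssim1$ and $\lambda^2\kappa^4\eta^2/(\gamma^2\lambda^2)\lesssim1$, both satisfied by $\eta=\Theta(\epsilon^4/\kappa^{11})\le\min(\beta/\kappa^3,\gamma/\kappa^4)$. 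Telescoping then gives
\[
\frac1T\sum_t\E\|\nabla\L_\lambda^*(\w_t)\|^2\lesssim\frac{\Phi_0-\inf\L^*_\lambda}{\eta T}+\beta\lambda^2\sigma_g^2+\frac{\lambda^2\gamma\sigma_g^2}{\mu_g}+\frac{\kappa^4}{\lambda^2}\ldots
\]
Here $\Phi_0-\inf\L_\lambda^*=O(1)$ by Lemma~\ref{lem:penal_property} and the initialization. With the stated parameters, each noise term is $O(\epsilon^2)$, and $T=O(1/(\eta\epsilon^2))=O(\kappa^{11}\epsilon^{-6})$. Finally, the gradient-approximation bound transfers the result to $F$.
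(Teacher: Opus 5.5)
Your plan is the paper's proof written as a potential argument. It uses the descent and moving-average recursions (Lemmas~\ref{lem:descent} and~\ref{lem:mmt_recur}), the bias split with a $Q_g r_\lambda$ remainder (Lemma~\ref{lem:grad_decomp}), and nonexpansiveness of the projection toward $\y_{t+1}-\s^*(\w_t)$ (Proposition~\ref{prop:s_proj_desc}). It also uses a gap contraction that keeps the $\y$-tracking error (Lemmas~\ref{lem:cross_gradnormsq_lb} and~\ref{lem:s_desc}), the $O(1/\lambda)$-Lipschitzness of $\s^*$ (Lemma~\ref{lem:s_lip}), and the same parameters.

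\textbf{The coefficient that must be corrected.} In your recursion for $\normsq{\s_{t+1}-\s^*(\w_t)}$, the $\normsq{\y_t-\y^*_\lambda(\w_t)}$ term has coefficient of order $\frac{\gamma}{\mu_g}(L_f^2/\lambda^2+Q_g^2r_\lambda^2)=O(\gamma\,\mathrm{poly}(\kappa)/\lambda^2)$, not $O(\gamma\kappa^2)$. Your own $Q_g r_\lambda$ remainder produces this $\lambda^{-2}$, and the argument needs it. With $O(\gamma\kappa^2)$, the gap term in $\Phi_t$ (weight $\propto\eta\lambda^2/\gamma$) would feed back $O(\eta\lambda^2\kappa^2/\mu_g)\normsq{\y_t-\y^*_\lambda}$ per step. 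Your $\lambda$-independent $\y$-weight cannot absorb that, and letting that weight grow like $\lambda^2$ degrades the $\epsilon$-rate.

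\textbf{The $\kappa$-bookkeeping.} Your accounting is too loose to certify $\kappa^{11}$.
\begin{itemize}
\item The sensitivity constant is $C_s=O(\kappa^3)$, not $\kappa^2$.
\item The gap weight must carry the factor $\lambda^2L_g^2$ from the bias split. Absorbing the corrected $\y$-term then forces the $\y$-weight up to order $\eta\bar\ell\kappa^5/\gamma$, instead of your $\eta\kappa^2/(\gamma\mu_g)$.
\item This produces an initialization term $\bar\ell\kappa^5\,\E Y_1/(\gamma T)$. The warm start $\E Y_1=O(1/\kappa)$ is exactly what keeps it at $O(\epsilon^2)$ when $T=O(\kappa^{11}\epsilon^{-6})$; without it, this term costs an extra factor of $\kappa$. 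Under your weights the warm start would appear unnecessary, which is a sign the weights are too small.
\end{itemize}
The constraint $\eta\le\gamma/\kappa^4$ that you quote, i.e.\ $\eta^2\kappa^8/\gamma^2=O(1)$, is the correct one. So the stated parameter choices survive these corrections.
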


\textbf{Remark:} %The goal of the warm-up stage for $\y$ is to get a good initialization such that the initial error of $\y$ is small. 
Note that the involved optimization problem in~(\ref{def:penal_ll_sol}) that define $\y_\lambda^*(\w_0)$ is strongly convex, hence the warm-up stage is cheap. For example, running a simple SGD for $T_0=O(\kappa^3)$-iterations for solving $\min_{\y}\L_\lambda(\w_0,\y)$ will find a solution $\y_0$ such that $\E[\|\y_0 - \y^*_\lambda(\w_0)\|^2]\leq O(1/(\mu^2_g T_0))=O(1/\kappa)$.

% \paragraph{Refined analysis of the coupled lower-level iterates.}
% The key is to analyze the iterate gap
% $\mathbf s_t:=\y_t-\z_t$ relative to
% $\mathbf s^*(\w):=\y_\lambda^*(\w)-\y^*(\w)$, rather than bound the two lower-level errors separately. Two refinements make this possible. First, a direct application of strong convexity does not establish contraction of the gap: the two iterates are updated using gradients of different objectives. We instead derive a coupled contraction inequality that explicitly retains the error in tracking $\y_\lambda^*$, with a coefficient of order $\lambda^{-2}$. This additional term can be controlled jointly with the recursion of the $\y_\lambda^*$ tracking error. Second, we establish the $O(\lambda^{-1})$-lipschitzness of $\s^*(\cdot)$(cf. lemma~\ref{lem:s_lip})
% \begin{equation}
%     \|\s^*(\w)- \s^*(\w')\|^2
%     \le \frac{C_s^2}{\lambda^2}\|\w-\w'\|^2,
%     \label{eq:s_lip_intro}
% \end{equation}
% where $C_s$ is independent of $\lambda$. Compared with~\citet{kwon2024complexity}, this bound removes an additive $O(\lambda^{-4})$ residual that would obstruct the desired single-loop analysis. The above 2 refinements together yield a one-step recursion for $\s$ with/without mean-square smoothness. Combining these refinements with the momentum-estimation recursion and descent of $\mathcal L_\lambda^*$ allows us to attain the $O(\epsilon^{-6})$ rate without double loop or large minibatches.

\subsection{Improved rate under mean-square smoothness} %\label{sec:main_mss}
We next establish an improved rate assuming the mean-square smoothness of $g$ as in Assumption~\ref{asm:mss}. The intuition behind the improvement is that under mean-square smoothness of $g$, {\bf the variance of the estimator} $\hat G_t^w$ will become independent of $\lambda$. 
% In this case, because the two evaluations of $g$ in~(\ref{eq:upper_grad}) use the same samples and the projection guarantees $\Norm{\y_{t+1}-\z_{t+1}}\leq r_\lambda=O(1/\lambda)$, the $\lambda^2$ variance amplification can be eliminated. In particular, Lemma~\ref{lem:grad_w_var} gives
% \begin{align}
%     \operatorname{Var}(\hat G_t^w)\leq\frac{1}{B}\left(2\sigma_f^2+\frac{8\tilde L_g^2C_f^2}{\mu_g^2}\right), \label{eq:upper_var_mss_method}
% \end{align}
% which is independent of $\lambda$. 
As a result, the same algorithm therefore provides a stronger benefit under Assumption~\ref{asm:mss}, leading to the improved rate below.
\begin{thm}\label{thm:mss}
Under Assumptions~\ref{asm:func},~\ref{asm:oracle} and~\ref{asm:mss}, let the batch size be $B=O(1)$. With the same warm-up initialization and by setting
\[
    \lambda=\Theta\left(\frac{\kappa^3}{\epsilon}\right),\qquad
    \beta=\Theta\left(\frac{\epsilon^2}{\kappa^2}\right),\qquad
    \gamma=\Theta\left(\frac{\epsilon^2}{\kappa^5}\right),\qquad
    \eta=\Theta\left(\frac{\epsilon^2}{\kappa^9}\right),
\]
Algorithm~\ref{alg:s2f2bo} returns $\w_\tau$ satisfying
\[
    \E\normsq{\nabla F(\w_\tau)}\leq\epsilon^2
\]
within $T=O(\kappa^9\epsilon^{-4})$ stochastic first-order oracle calls, where $\tau$ is uniformly sampled from $\{0, 1, \cdots, T-1\}$.
\end{thm}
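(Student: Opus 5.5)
The proof is a potential-function argument that combines four one-step recursions: descent of $\L_\lambda^*$, the moving-average error, the tracking error of $\y$, and the tracking error of the gap $\s_t:=\y_t-\z_t$. The first two are standard; the MSS assumption enters only through the variance of $\hat G_t^w$ and of the gap dynamics.

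\textbf{Descent and moving average.} By Lemma~\ref{lem:penal_property}, $\L_\lambda^*$ is $L_*$-smooth with $L_*=O(\bar\ell\kappa^3)$ independent of $\lambda$. With $\eta\le 1/(2L_*)$ the usual momentum descent inequality gives
\[
\L_\lambda^*(\w_{t+1})\le \L_\lambda^*(\w_t)-\tfrac{\eta}{2}\|\nabla\L_\lambda^*(\w_t)\|^2-\tfrac{\eta}{4}\|\v_{t+1}\|^2+\tfrac{\eta}{2}\|\v_{t+1}-\nabla\L_\lambda^*(\w_t)\|^2 .
\]
For the moving-average error $\delta_t:=\|\v_{t+1}-\nabla\L_\lambda^*(\w_t)\|^2$ I expect
\[
\E\,\delta_{t}\le(1-\beta)\E\,\delta_{t-1}+\tfrac{2L_*^2\eta^2}{\beta}\E\|\v_t\|^2+3\beta\,\E\|G_t^w-\nabla\L_\lambda^*(\w_t)\|^2+\beta^2\mathrm{Var}(\hat G_t^w).
\]

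\textbf{Variance of $\hat G_t^w$ under MSS.} Both $g$-terms in $\hat G_t^w$ use the same batch $\B_{gw}^t$, and the projection enforces $\|\y_{t+1}-\z_{t+1}\|\le r_\lambda$. Assuming Assumption~\ref{asm:mss} supplies the $\w$-block $\nabla_1 g$ smoothness needed here,
\[
\mathrm{Var}(\hat G_t^w)\le\tfrac{2}{B}\bigl(\sigma_f^2+\lambda^2\tilde L_g^2r_\lambda^2\bigr)=O(\sigma_f^2+\tilde L_g^2C_f^2/\mu_g^2)/B ,
\]
which is $\lambda$-free. This replaces the $O(\lambda^2\sigma_g^2)$ term of the bounded-variance case, so $\beta$ can be taken as $\epsilon^2$ instead of $\epsilon^4$.

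\textbf{Bias via the gap decomposition.} Write $\s^*(\w):=\y_\lambda^*(\w)-\y^*(\w)$ and decompose $G_t^w-\nabla\L_\lambda^*(\w_t)$. The $\nabla_1 f$ term and the part of the $\nabla_1 g$ difference evaluated at a common base point give $O(\lambda\|\y_{t+1}-\y_\lambda^*(\w_t)\|)$. Taylor expansion with $Q_g$-Lipschitz Hessians gives $\lambda\,\nabla_{12}g\,(\s_{t+1}-\s^*(\w_t))$ plus cross terms of order $\lambda Q_g r_\lambda(\|\y_{t+1}-\y_\lambda^*\|+\|\s_{t+1}-\s^*\|)$. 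The key refinement is that $\lambda\|\s_{t+1}-\s^*\|$ is the quantity that must be small, and since $\|\s\|\le r_\lambda$ this is an $O(1)$-scale object. The bias is then bounded by $O(\lambda^2\|\s_{t+1}-\s^*(\w_t)\|^2+\|\y_{t+1}-\y_\lambda^*(\w_t)\|^2)$ with only $O(1)$ constants on the $\y$-term after the cancellation. If that cancellation fails I keep $\lambda^2$ on the $\y$-error and compensate via the warm start and a smaller $\gamma$.

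\textbf{Tracking recursions.}
\begin{itemize}[leftmargin=*]
\item For $e_t:=\E\|\y_t-\y_\lambda^*(\w_{t-1})\|^2$, use strong convexity with $\gamma\le 1/L_g$ together with Lipschitzness of $\y_\lambda^*$ (constant $O(\kappa)$), which moves the target by $O(\kappa\eta\|\v_t\|)$. This gives
\[
e_{t+1}\le(1-\tfrac{\mu_g\gamma}{2})e_t+O(\tfrac{\kappa^2\eta^2}{\mu_g\gamma})\E\|\v_t\|^2+\gamma^2\sigma^2 .
\]
\item For the gap, note that projection onto a ball centred at $\y_{t+1}$ is nonexpansive relative to $\y_{t+1}-\s^*$ whenever $\|\s^*\|\le r_\lambda$, which holds by (\ref{eq:lower_sol_gap}). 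Under MSS, the difference $\nabla_2 g(\y_t,\xi)-\nabla_2 g(\z_t,\xi)$ has second moment at most $\tilde L_g^2\|\s_t\|^2\le\tilde L_g^2r_\lambda^2$. This yields a contraction
\[
\E\|\s_{t+1}-\s^*_{t}\|^2\le(1-\tfrac{\mu_g\gamma}{2})\E\|\s_t-\s^*_{t-1}\|^2+O(\gamma\lambda^{-2}/\mu_g)\,e_t+O(\tfrac{C_s^2\eta^2}{\lambda^2\mu_g\gamma})\E\|\v_t\|^2+O(\gamma^2(\sigma_g^2\lambda^{-2}\text{-scale}+\tilde L_g^2r_\lambda^2)).
\]
The $\lambda^{-2}e_t$ coefficient comes from the fact that the two iterates are driven by different objectives, since $\lambda^{-1}\nabla_2 f$ enters only $\y$. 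I also need $\s^*$ to be $C_s/\lambda$-Lipschitz, obtained by differentiating the optimality conditions. The problem is that the noise of $\y_{t+1}$ itself enters $\s_{t+1}$ at order $\gamma^2\sigma_g^2$. To handle this I would track $\z$ against its own noise-coupled step, using the shared batch $\B_{gy}^t$, so that the common noise cancels up to $\tilde L_g\|\s_t\|$.
\end{itemize}

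\textbf{Assembly.} Form $\Phi_t=\L_\lambda^*(\w_t)+c_1\delta_{t-1}+c_2e_t+c_3\lambda^2\E\|\s_t-\s^*_{t-1}\|^2$. Choose $c_1\sim\eta/\beta$ and $c_2,c_3\sim\eta\beta/(\mu_g\gamma)$ times the bias constants, so that all $\E\|\v_t\|^2$ terms are absorbed by $-\tfrac{\eta}{4}\|\v\|^2$. This forces $\eta\lesssim\beta/L_*$ and $\eta\lesssim\gamma/\kappa^{2}\cdot(\cdot)$. Telescoping gives
\[
\tfrac1T\sum_t\E\|\nabla\L_\lambda^*(\w_t)\|^2\lesssim\tfrac{\Phi_0}{\eta T}+\beta\,\mathrm{Var}+\tfrac{\beta\gamma}{\mu_g}(\lambda^2\tilde L_g^2r_\lambda^2+\sigma^2).
\]
The warm start bounds $e_0$, and $\lambda^2\|\s_0-\s^*\|^2\le4\lambda^2r_\lambda^2=O(\kappa^2)$. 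The stated choices $\beta=\epsilon^2/\kappa^2$, $\gamma=\epsilon^2/\kappa^5$, $\eta=\epsilon^2/\kappa^9$, $T=\kappa^9\epsilon^{-4}$ make each term $O(\epsilon^2)$. Lemma~\ref{lem:penal_property} with $\lambda=\kappa^3/\epsilon$ then transfers the bound to $\nabla F$.

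\textbf{Main obstacle.} The hard step is the gap recursion: obtaining contraction of $\lambda^2\|\s_t-\s^*\|^2$ with only $\lambda$-free noise. This requires the projection nonexpansiveness argument and the shared-sample MSS cancellation to work simultaneously.
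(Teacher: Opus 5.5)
Your plan follows the paper's proof, and your weighted potential is equivalent to the paper's direct summation. The ingredients match: the descent inequality, the moving-average recursion with the $\lambda$-free MSS variance of $\hat G_t^w$, a bias bound $O(L_{y*}^2)\normsq{\y_{t+1}-\y_\lambda^*(\w_t)}+O(\lambda^2L_g^2)\normsq{\s_{t+1}-\s_t^*}$, the SGD recursion for $\y$, and a gap recursion. That gap recursion is built from nonexpansiveness of the projection (Proposition~\ref{prop:s_proj_desc}), $C_s/\lambda$-Lipschitzness of $\s^*$, the shared-batch $O(\lambda^{-2})$ variance of $\hat G_t^s$, and a contraction that pays an $O(\lambda^{-2})$ multiple of the $\y$-error.

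\textbf{The potential weights are off by a factor $\beta$.} The bias enters the $\delta$-recursion with weight $3\beta$ and is then multiplied by $c_1\sim\eta/\beta$, so it appears with weight of order $\eta$. To absorb it with $-c_2\tfrac{\mu_g\gamma}{2}e_t$ and $-c_3\tfrac{\mu_g\gamma}{4}\lambda^2\normsq{\s_t-\s_{t-1}^*}$, you need $c_2,c_3\sim\eta/(\mu_g\gamma)$ times the bias constants, not $\eta\beta/(\mu_g\gamma)$. With your weights the potential does not decrease, and your telescoped bound with noise term $\tfrac{\beta\gamma}{\mu_g}(\cdot)$ is not what the argument delivers.

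The effective weight on the $\y$-error is also $O(\bar\ell^2\kappa^4)$, not $O(1)$. Upon summation, the coupling coefficient $\tfrac{8\gamma}{\mu_g}\bigl(L_f^2/\lambda^2+Q_g^2r_\lambda^2\bigr)$ in the gap recursion is multiplied by $\lambda^2L_g^2/(\mu_g\gamma)$. This gives $O\bigl(L_{y*}^2+\kappa^2(L_f^2+Q_g^2C_f^2/\mu_g^2)\bigr)$.

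The correct noise floor is therefore
$\tfrac{\gamma}{\mu_g B}\bigl[O(\bar\ell^2\kappa^4)\sigma_y^2+O(L_g^2)(\sigma_f^2+C_f^2\tilde L_g^2/\mu_g^2)\bigr]$.
This term, not anything multiplied by $\beta$, is what forces $\gamma=\Theta(\epsilon^2/\kappa^5)$. The matching initial term $O(\kappa^4)e_0/(\mu_g\gamma T)=O(\kappa\epsilon^2e_0)$ is why the warm start must give $e_0=O(1/\kappa)$. Absorbing the $\normsq{\v_t}$ terms requires $\eta=O(\gamma/\kappa^4)$ alongside $\eta=O(\beta/L_*)$. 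With your weights these constraints would disappear, which signals the bookkeeping slip. After the correction, the stated $(\lambda,\beta,\gamma,\eta,T)$ check out exactly as in the paper.

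\textbf{The coupled contraction should be proved explicitly.} The clean inequality $\langle G_t^s,\s_t-\s_t^*\rangle\ge c\normsq{\s_t-\s_t^*}$ is false (Appendix~\ref{sec:counter_eg}), so you should say how you get this step. The paper writes $G_t^s=H(\w_t,\y_t,\s_t)$ with $H(\w,\y,\s)=\lambda^{-1}\nabla_2 f(\w,\y)+\nabla_2 g(\w,\y)-\nabla_2 g(\w,\y-\s)$. This map is $\mu_g$-strongly monotone in $\s$, vanishes at $(\y_\lambda^*(\w_t),\s_t^*)$, and at $\s=\s_t^*$ is Lipschitz in $\y$ with constant $O(L_f/\lambda+Q_gr_\lambda)$ (Lemma~\ref{lem:cross_gradnormsq_lb}).

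So the $\lambda^{-2}$ coefficient comes from two sources: the $f$-term, and the variation of $\nabla^2 g$ across base points (scaled by $\Norm{\s_t^*}\le r_\lambda$). It does not come only from $\lambda^{-1}\nabla_2 f$, as you suggest. Both are $O(\lambda^{-2})$, so the form of your recursion stands. Your fallback of keeping $\lambda^2$ on the $\y$-error is unnecessary, since the Taylor cancellation in the bias does hold, and it would not be compatible with the stated parameters.
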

{\bf Remark:} To the best of our knowledge, this rate also has the best dependence on $\kappa$ compared with that of other fully first-order methods.

\subsection{Sketch of Analysis}
We outline the convergence analysis and highlight its novel elements. Let $\E_t$ denote expectation over the randomness at iteration $t$, conditional on $\w_t,\y_t,\z_t$. Define
\begin{align*}
    \s_t &\coloneqq \y_t-\z_t,\;  Y_t \coloneqq \normsq{\y_{t+1}-\y_\lambda^*(\w_t)},\; S_t \coloneqq \normsq{\s_{t+1}-\s^*(\w_t)},\; V_t \coloneqq \normsq{\v_{t+1}-\nabla\L_\lambda^*(\w_t)}.
\end{align*}

The smoothness of $\L_\lambda^*$ in~(\ref{eq:sm_surrogate}) yields the following descent inequality for $\eta\leq 1/(2L_*)$ (see Lemma~\ref{lem:descent}):
\begin{align}\label{eq:noncvx_starter_0}
    \E_t\L_\lambda^*(\w_{t+1})
    \leq \L_\lambda^*(\w_t)
    +\frac{\eta}{2}\E_t V_t
    -\frac{\eta}{4}\E_t\normsq{\v_{t+1}}
    -\frac{\eta}{2}\normsq{\nabla\L_\lambda^*(\w_t)}.
\end{align}
The moving-average update further gives the following recursion for the gradient estimation error (see Lemma~\ref{lem:mmt_recur}):
\begin{align*}
    \E_t V_{t+1}
    \leq {}&(1-\beta)V_t
    +\frac{2L_*^2\eta^2}{\beta}\normsq{\v_{t+1}}+4\beta\normsq{G^w_{t+1}-\nabla\L_\lambda^*(\w_{t+1})}
    +\beta^2\E_t\normsq{\hat G^w_{t+1}-G^w_{t+1}}.
\end{align*}
The next step is to control the squared bias
$\normsq{G^w_{t+1}-\nabla\L_\lambda^*(\w_{t+1})}$
and the variance
$\E_t\normsq{\hat G^w_{t+1}-G^w_{t+1}}$.
These terms are weighted by $\beta$ and $\beta^2$, respectively. Thus, the moving average suppresses the variance more strongly than the squared bias, making tight control of the bias particularly important. For the variance, we leverage the following bounds (see Lemma~\ref{lem:grad_w_var}):
\begin{align}
    &\text{Under Assumption~\ref{asm:oracle}:}\quad
    \E_t\normsq{\hat G^w_{t+1}-G^w_{t+1}}
    \leq \frac{1}{B}\left(2\sigma_f^2+8\lambda^2\sigma_g^2\right),
    \label{eq:upper_var_main}\\
    &\text{Under Assumptions~\ref{asm:oracle} and~\ref{asm:mss}:}\quad
    \E_t\normsq{\hat G^w_{t+1}-G^w_{t+1}}
    \leq \frac{1}{B}\left(2\sigma_f^2+\frac{8\tilde L_g^2C_f^2}{\mu_g^2}\right).
    \label{eq:upper_var_mss_method}
\end{align}
In particular, mean-square smoothness removes the dependence on $\lambda$ from the variance bound.

For the squared bias, our analysis relies on the following decomposition (see Lemma~\ref{lem:grad_decomp}):
\begin{align}
    \normsq{G_t^w-\nabla\L_\lambda^*(\w_t)}
    \leq O(1)Y_t
    +O(\lambda^2)S_t
    +\frac{O(\kappa^4)}{\lambda^2}.
    \label{eq:grad_est_decomp}
\end{align}
The arguments up to this point largely follow those developed in prior work~\citep{yang2026compositional,kwon2024complexity}.

It remains to establish recursions for the tracking errors $Y_t$ and $S_t$. The recursion for $Y_t$ follows standard SGD analysis for strongly convex objectives. The main departure from prior analyses lies in controlling $S_t$. Let
$G_t^s\coloneqq\E_t[\hat G_t^y-\hat G_t^z]$
and $\s_t^*\coloneqq\s^*(\w_t)$.
The analysis of~\citet{kwon2024complexity} relies on an inequality of the form
\begin{align}\label{eq:wrong_crossterm_bound}
    \inner{G_t^s}{\s_t-\s_t^*}\geq c\normsq{\s_t-\s_t^*}
\end{align}
for some $c>0$. However, this inequality does not hold in general, as demonstrated by the counterexample in Appendix~\ref{sec:counter_eg}.

To establish a valid contraction bound, we introduce the mapping
\begin{align}
    H(\w,\y,\s)
    \coloneqq \lambda^{-1}\nabla_2f(\w,\y)
    +\nabla_2g(\w,\y)
    -\nabla_2g(\w,\y-\s).
    \label{eq:H_method}
\end{align}
For fixed $(\w,\y)$, the mapping $H(\w,\y,\cdot)$ is $\mu_g$-strongly monotone. Using this property, we establish the following lower bound (see Lemma~\ref{lem:cross_gradnormsq_lb}):
\begin{align}
    \inner{G_t^s}{\s_t-\s_t^*}
    \geq \frac{\mu_g}{2}\normsq{\s_t-\s_t^*}
    -\frac{1}{\mu_g}
    \left(\frac{L_f^2}{\lambda^2}+Q_g^2r_\lambda^2\right)
    \normsq{\y_t-\y_\lambda^*(\w_t)}.
    \label{eq:s_cross_method}
\end{align}
This bound accounts explicitly for the $\y$-tracking error. Since $r_\lambda=C_f/(\mu_g\lambda)$, the coefficient of this additional error term is $O(\lambda^{-2})$. This bound is central to deriving the one-step recursion for the $\s$-tracking error in Lemma~\ref{lem:s_desc}.

% A second novel element is a sharper Lipschitz bound for $\s^*(\cdot)$ (see Lemma~\ref{lem:s_lip}):
% \begin{align}
%     \normsq{\s^*(\w_t)-\s^*(\w_{t-1})}
%     \leq \frac{O(\kappa^6)}{\lambda^2}
%     \normsq{\w_t-\w_{t-1}}.
%     \label{eq:s_lip_method}
% \end{align}
% This establishes that $\s^*(\cdot)$ is $O(\lambda^{-1})$-Lipschitz and eliminates the additive $O(\lambda^{-4})$ term in the corresponding bound of~\citet{kwon2024complexity}. Removing this residual term is crucial for obtaining the desired $O(\epsilon^{-6})$ complexity.

Combining these bounds yields Theorems~\ref{thm:std} and~\ref{thm:mss}. The complete proofs are provided in Appendices~\ref{sec:std} and~\ref{sec:mss}.

\section{Numerical experiments}\label{sec:exp}
We evaluate SICO on two stochastic bilevel learning tasks: data hyper-cleaning and feature-wise regularization selection. We compare against F2SA and F3SA~\citep{kwon2023fully}, PMLLC\footnote{The full name is \emph{Penalty Methods with Lower-Level Coupling}. Since the original authors did not provide an official abbreviation, we introduce one here for ease of presentation.}~\citep{kwon2024complexity}, F2BSA~\citep{JMLR:v26:23-1104}, and
Stoc-SGHA~\citep{gu2026sgha}.

\noindent{\bf Implementation and hyperparameter tuning.}
All methods use logistic regression with an intercept, mini-batches of size $64$, and a budget of $10{,}000$ inner iterations, with evaluation every $100$ iterations. Training and validation batches are sampled independently. Double loop method F2BSA and PMLLC perform five inner updates per outer update. We tune upper level learning rate $\eta$ in $\{10^{-2}, 10^{-1}, 0.5, 1, 3, 10, 10^{2}, 10^{3}\}$ and lower level learning rate $\gamma$ in $\{10^{-4}, 10^{-3}, 10^{-2}, 10^{-1}, 1\}$; $r_\lambda$ in $\{10^{-4}, 10^{-3}, 10^{-2}, 10^{-1}, 1, 2\}$; $\lambda$ in $\{10, 20, 30, 100, 300, 1000\}$. For more tuning detail please refer to Appendix~\ref{sec:exp_detail}. Experiments are conducted on an NVIDIA A30 GPU.
\iffalse
We evaluate our method on three representative stochastic bilevel optimization problems: a synthetic nonconvex--strongly-convex problem, data hyper-cleaning, and regularization selection.
\subsection{Synthetic Problem(TBD)}

We first consider the bilevel problem
\begin{equation}
\min_{\w\in\mathbb{R}^{d_w}}
F(\w):=f\bigl(\w,\y^*(\w)\bigr), \qquad \y^*(\w):=\arg\min_{\y\in\mathbb{R}^{d_y}}g(\w,\y),
\end{equation}
where
\begin{align}
f(\w,\y)&=\frac{1}{r}\mathbf{1}^{\top}\sin(C\w+D\y)
+\frac{\rho}{2}|\w|^2,\
g(\w,\y)&=
\frac{1}{2}\y^\top A\y-\y^\top(B\w+c).
\end{align}
Here, $A$ is symmetric positive definite, so $g(\w,\cdot)$ is strongly convex and
\begin{equation}
\y^*(\w)=A^{-1}(B\w+c).
\end{equation}
The sine term makes the upper-level objective nonconvex. To introduce stochasticity in both levels, we use
\begin{align}
f_{\xi}(\w,\y)&=f(\w,\y)+u_{\xi}^{\top}\w+v_{\xi}^{\top}\y,\
g_{\zeta}(\w,\y)
&=g(\w,\y)+p_{\zeta}^{\top}\w+q_{\zeta}^{\top}\y,
\end{align}
where the noise vectors are independent and zero mean. Their variances are controlled by parameters $\sigma_f^2$ and $\sigma_g^2$. We generate $A$ with prescribed eigenvalues to examine different lower-level condition numbers and consider several stochastic noise levels.
\fi

\noindent{\bf Data Hyper-Cleaning.}
\label{subsec:hypercleaning}
In data hyper-cleaning, the training set contains corrupted labels, while a small clean validation set is available. An upper-level variable $\w_i$ is introduced to control the importance of each training example. Let
$
    \mathcal{D}_{\mathrm{tr}} = \{(a_i, b_i)\}_{i=1}^{n},  \mathcal{D}_{\mathrm{val}} =
    \{(a_j^{\mathrm{val}},b_j^{\mathrm{val}})\}_{j=1}^{m}.
$
We consider
\begin{align*}
&\min_{\w\in\mathbb{R}^{n}}\frac{1}{m}\sum_{j=1}^{m}\ell\left(\y^*(\w);
a_j^{\mathrm{val}},b_j^{\mathrm{val}}\right)\\
&\y^*(\w)=\arg\min_{\y}\frac{1}{n}\sum_{i=1}^{n}
\sigma(\w_i)\ell(\y;a_i, b_i)+\frac{\mu}{2}\normsq{\y},
\end{align*}
where $\sigma(\cdot)$ is the sigmoid function and $\ell$ is the cross-entropy loss. We use a multiclass logistic regression classifier, and the quadratic regularizer guarantees strong convexity of the lower-level problem. Stochastic upper- and lower-level gradients are computed using independently sampled validation and training mini-batches.

We use \textsc{MNIST} and \textsc{Fashion-MNIST} to verify our algorithm on this task. %Following the standard setting, a fraction of the training labels is randomly corrupted, while the validation and test sets remain clean.
MNIST and Fashion-MNIST ~\citep{lecun1998gradient,xiao2017fashion} contain $28\times28$ grayscale images of handwritten digits and clothing items, respectively, with ten classes each. For each dataset, we randomly partition the official $60{,}000$ training images into $50{,}000$ training and $10{,}000$ validation examples and retain the official $10{,}000$ test images. Each training label is independently replaced by a uniformly sampled incorrect class with probability $p\in\{0.5,0.9,0.99\}$; validation and test labels remain clean. We fix $\mu=0.1$. Following previous studies~\cite{JMLR:v26:23-1104, gu2026sgha}, we report the average learned weight on corrupted examples, $|G|^{-1}\sum_{i\in G}\sigma(w_i)$, where $G$ is the corruption set. Figure~\ref{fig:data_hypercleaning} shows that our algorithm learns a smallest weight for garbage data and convergence is the fastest. 

\begin{figure}[t]
\begin{center}
%\framebox[4.0in]{$\;$}
% \fbox{\rule[-.5cm]{0cm}{4cm} \rule[-.5cm]{4cm}{0cm}}
\includegraphics[scale=0.3]{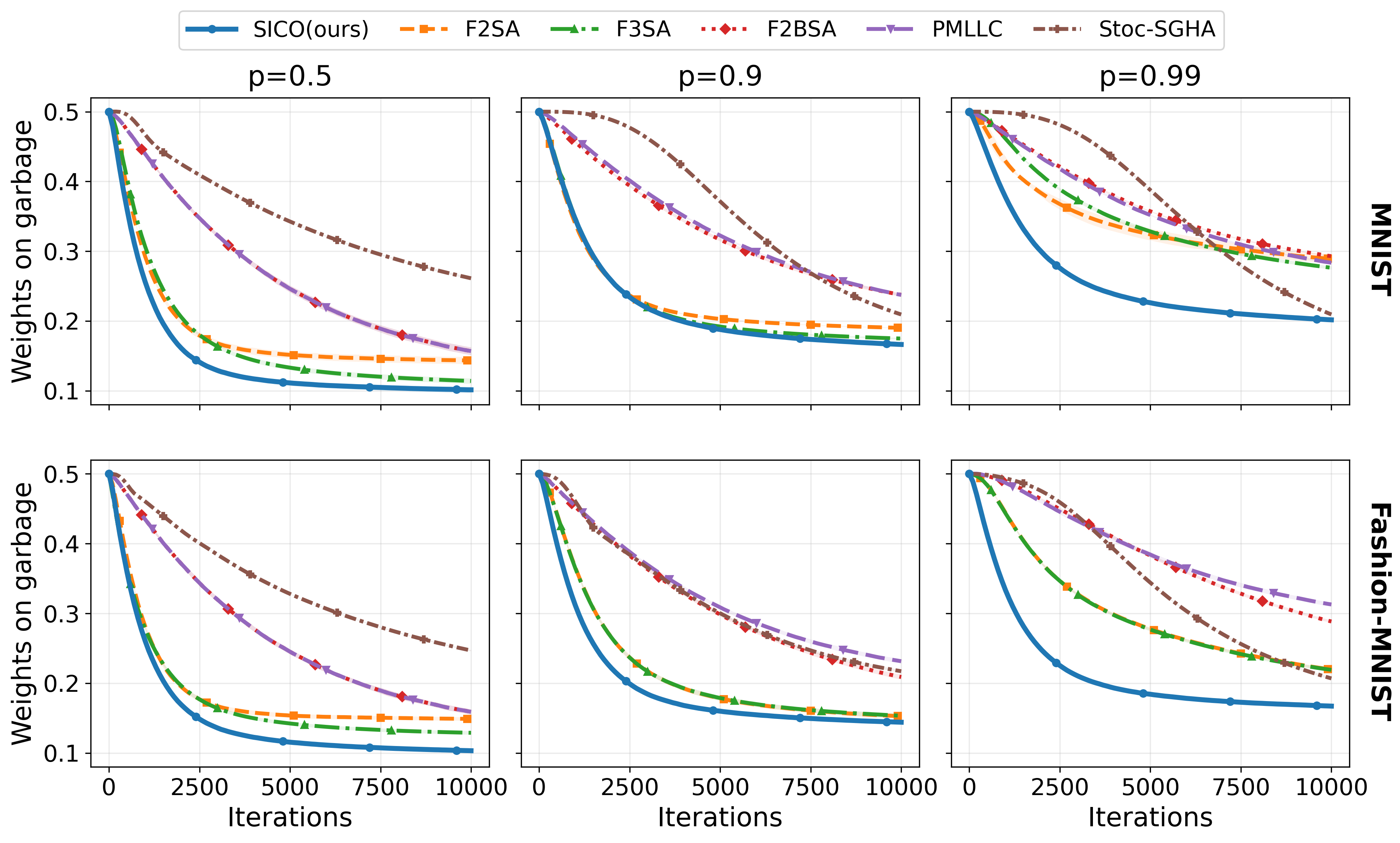}
\end{center}
\vspace*{-0.2in}
\caption{Comparison of first order method of NC-SC SBO problem for data hyper-cleaning task. Upper row: MNIST dataset; Lower row: Fashion-MNIST dataset.}\label{fig:data_hypercleaning}
%\end{figure}
%\begin{figure}[t]
\begin{center}
\includegraphics[scale=0.25]{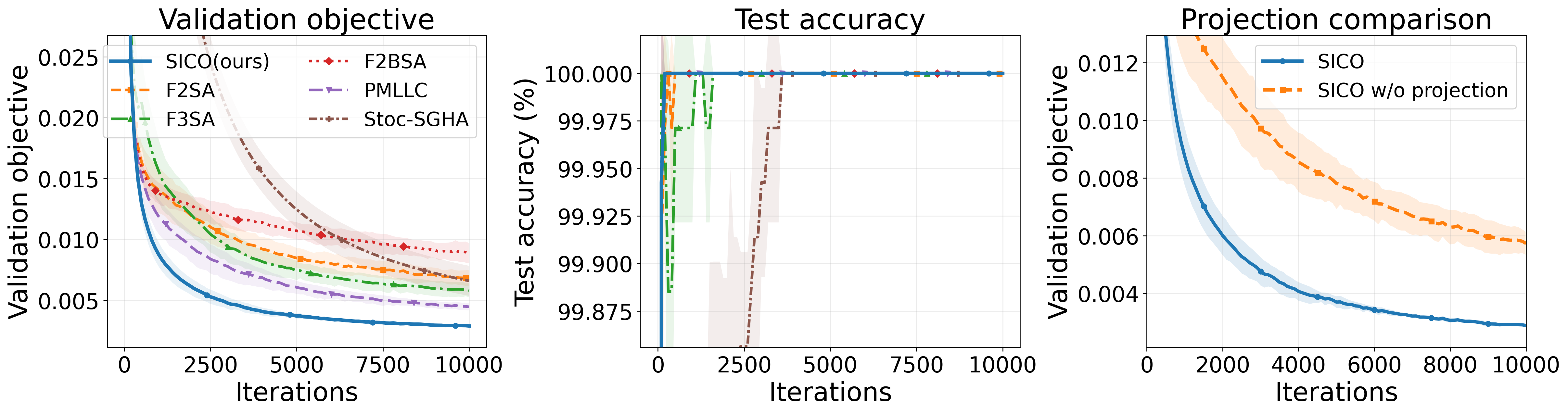}
\end{center}
\vspace*{-0.2in}\caption{Left two: Comparison of first order method of NC-SC SBO problem for regularization selection task on the Mushroom dataset. Right: Ablation study on SICO vs w/o projection.}\label{fig:learn_regularizer}
\end{figure}

%\subsection
\noindent{\bf Regularization Selection.}
\label{subsec:regularization-selection}
We next consider learning feature-wise regularization parameters for logistic regression. Let $\y$ denote the classifier parameters and let $\w_k$ control the regularization strength of the $k$-th feature. We solve
% \vspace{-0.1pt}
\begin{align*}
&\min_{\w\in\mathbb{R}^{d}}\frac{1}{m}\sum_{j=1}^m\ell\left(\y^*(\w);a_j^{\text{val}},b_j^{\text{val}}\right)\\
&\y^*(\w)=\arg\min_{\y}\frac{1}{n}\sum_{i=1}^n
\ell(\y;a_i,b_i)+\frac{1}{2d}\sum_{k=1}^{d}\exp(w_k)\normsq{\y_{:,k}}+\frac{\mu_0}{2d}\normsq{\y}_F.
\end{align*}
\vspace{-0.1pt}
% The fixed regularization parameter $\mu_0>0$ ensures that the lower-level objective is strongly convex.  Independent mini-batches from the training and validation sets provide stochastic gradients for the two levels.

% We use \textsc{Mushroom}, a binary-clasSICOcation datasets for verifying our algorithm.
We use the LIBSVM version of Mushroom dataset~\citep{CC01a}, which contains $8124$ examples from two classes and $112$ binary features encoding categorical attributes. A seeded random partition yields $5802/1161/1161$ training/validation/test examples. Features are standardized using training-set means and standard deviations, with unit scale for constant features; labels are mapped to $\{0,1\}$. We fix $\mu_0=0.1$ and divide both quadratic penalties by the number of feature weights ($d=112$). The intercept receives only the fixed penalty. Figure~\ref{fig:learn_regularizer} (left two) reports validation cross-entropy and test accuracy and shows that our method reach the lowest validation error.

Finally, we conduct an ablation study on the regularization selection task comparing SICO with the variant without the projection step on $\z$. The result shown in Figure~\ref{fig:learn_regularizer} shows that projection improves the convergence of SICO. 

%-------------------------
\iffalse
\begin{figure}[h]
\begin{center}
%\framebox[4.0in]{$\;$}
\fbox{\rule[-.5cm]{0cm}{4cm} \rule[-.5cm]{4cm}{0cm}}
\end{center}
\caption{Sample figure caption.}
\end{figure}

\begin{table}[t]
\caption{Sample table title}
\label{sample-table}
\begin{center}
\begin{tabular}{ll}
\multicolumn{1}{c}{\bf PART}  &\multicolumn{1}{c}{\bf DESCRIPTION}
\\ \hline \\
Dendrite         &Input terminal \\
Axon             &Output terminal \\
Soma             &Cell body (contains cell nucleus) \\
\end{tabular}
\end{center}
\end{table}
\fi
\section{Conclusions}
% In this work, we propose a fully first-order methods for NC-SC stochastic bilevel optimization. Building on the penalty framework and projected lower-level coupling, we develop a single-loop method that requires only a constant number of stochastic gradient samples per iteration. The key ingredients are to extend the projected tracking of the gap between the penalized and original lower-level solutions to the standard bounded-variance setting, and to combine it with a momentum estimator that controls the stochastic error of the penalty gradient without using a large minibatch. We further provide a refined analysis of the coupled lower-level dynamics which is necessary to provide the desired sample complexity. Under standard bounded variance assumption(resp. mean-square smoothness), our method achieves $O(\kappa^{11}\epsilon^{-6})$(resp. $O(\kappa^9\epsilon^{-4})$) complexity with $O(1)$ batch size, matching the best-known rate of existing first-order methods. These results show that neither a large batchsize nor a double-loop structure is necessary for attaining the best-known complexity within the first-order methods.

% Potential directions for future work is to further improve the dependence on the condition parameter $\kappa$ and closing the gap between this upper bound and the best known $O(\epsilon^{-4})$ lower bound~\citep{chen2025condition}.

We have proposed a fully first-order method for non-convex-strongly-convex stochastic bilevel optimization. Building on the penalty framework and projected lower-level coupling, we combine projected tracking of lower level variables with a momentum update of the upper level variable to obtain a single-loop algorithm using only $O(1)$ batch size. Our refined analysis establishes $O(\kappa^{11}\epsilon^{-6})$ complexity under standard bounded variance and $O(\kappa^9\epsilon^{-4})$ complexity under the additional mean-square smoothness assumption, matching the best-known rates of existing fully first-order methods without requiring large batches or nested loops. %Future directions include improving the dependence on the condition number $\kappa$ and closing the gap between the current upper bound and the best-known $O(\epsilon^{-4})$ lower bound~\citep{chen2025condition}.

\newpage
% \section{Final instructions}
% Do not change any aspects of the formatting parameters in the style files.
% In particular, do not modify the width or length of the rectangle the text
% should fit into, and do not change font sizes (except perhaps in the
% \textsc{References} section; see below). Please note that pages should be
% numbered.

\section{Statements}
\subsection*{AI use statement}
In this work, we used generative AI tools to polish the main paper writing. We also use them to help build the codebase for numerical experiments presented in section~\ref{sec:exp}. We have reviewed all the AI-assisted work. Specifically, 2 of the authors reviewed and verified the training and evaluation pipeline and algorithms implementation(including our method and baseline methods) to make sure they are correct. Besides, we use AI tools to help construct the counter example in appendix~\ref{sec:counter_eg}. All the remaining required disclosure tasks are either inapplicable to this work or are done by ourselves. We take responsibility for the final content of this work, including text, claims or artifacts produced with the aid of generative AI.

\subsection*{Reproducibility statement}
For theoretical results, we provide formal assumption and theorem and necessary explanation in the main paper, and technical lemma with their proof in appendix~\ref{sec:lem}, and proof of theorems in appendix~\ref{sec:std} and~\ref{sec:mss}. 

For experiment results, we manually set the random seed for each experiment and provide the detail information about experiment setting in section~\ref{sec:exp}. All the datasets used in our experiment are open source. All the aforementioned efforts help make sure that all of presented result are reproducible.

\iffalse
(This section is \textbf{recommended} and does not count toward the page limit.)

It is important that the work published in ICLR is reproducible. Authors are
strongly encouraged to include a paragraph-long Reproducibility Statement at the
end of the main text (before references) to discuss the efforts that have been
made to ensure reproducibility. This paragraph should not itself describe
details needed for reproducing the results, but rather reference the parts of
the main paper, appendix, and supplemental materials that will help with
reproducibility. For example, for novel models or algorithms, a link to an
anonymous downloadable source code can be submitted as supplementary materials;
for theoretical results, clear explanations of any assumptions and a complete
proof of the claims can be included in the appendix; for any datasets used in
the experiments, a complete description of the data processing steps can be
provided in the supplementary materials. Each of the above are examples of
things that can be referenced in the reproducibility statement.
\fi
\iffalse
\subsubsection*{Author Contributions}
If you'd like to, you may include  a section for author contributions as is done
in many journals. This is optional and at the discretion of the authors.

\subsubsection*{Acknowledgments}
Use unnumbered third level headings for the acknowledgments. All
acknowledgments, including those to funding agencies, go at the end of the paper.
\fi
\newpage
\bibliography{iclr2027_conference,ref}
\bibliographystyle{iclr2027_conference}

\clearpage
\appendix

\section{Technical Lemma}\label{sec:lem}

\subsection{Preliminary}
\paragraph{Notation}For convenience, let us first introduce the following notations:
\[\y_{\lambda, t}^*\coloneqq\y_\lambda^*(\w_t), \s_t^*\coloneqq\s^*(\w_t),\]
and denote $\I$ as identity matrix. Besides, let us denote expectation given the history before the $t$-th iteration(resp. given the history before the $t$-th iteration together with $\B^t_{fy}$ and $\B^t_{gy}$) as $\E_t$(resp. $\E_{t+}$). 

Below we will introduce several useful lemma. 
\begin{lem}\label{lem:ll_sol_prox}
    Under assumption~\ref{asm:func}, for $\lambda\geq\frac{2L_f}{\mu_g}$, it holds that
    \begin{align}\label{eq:ll_sol_prox}
        \Norm{\y^*(\w) - \y_\lambda^*(\w)}\leq \frac{C_f}{\mu_g\lambda}
    \end{align}
\end{lem}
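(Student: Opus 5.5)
We compare the first-order optimality conditions of the two strongly convex problems defining $\y^*(\w)$ and $\y_\lambda^*(\w)$, and then use strong monotonicity of $\nabla_2 g(\w,\cdot)$. Fix $\w$ and write $\y^*=\y^*(\w)$, $\y_\lambda^*=\y_\lambda^*(\w)$. Since $g(\w,\cdot)$ is $\mu_g$-strongly convex and smooth, $\y^*$ is characterized by
\[
\nabla_2 g(\w,\y^*)=\mathbf{0}.
\]
Since $\lambda\geq 2L_f/\mu_g$, the function $\lambda^{-1}f(\w,\cdot)+g(\w,\cdot)$ is strongly convex. Its unique minimizer $\y_\lambda^*$ therefore satisfies
\[
\lambda^{-1}\nabla_2 f(\w,\y_\lambda^*)+\nabla_2 g(\w,\y_\lambda^*)=\mathbf{0}.
\]

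Subtracting the two conditions gives
\[
\nabla_2 g(\w,\y_\lambda^*)-\nabla_2 g(\w,\y^*)=-\lambda^{-1}\nabla_2 f(\w,\y_\lambda^*).
\]
We take the inner product of both sides with $\y_\lambda^*-\y^*$. By $\mu_g$-strong convexity of $g(\w,\cdot)$, equivalently $\mu_g$-strong monotonicity of its gradient, the left side is at least $\mu_g\|\y_\lambda^*-\y^*\|^2$. The right side is at most $\lambda^{-1}\|\nabla_2 f(\w,\y_\lambda^*)\|\,\|\y_\lambda^*-\y^*\|$ by Cauchy--Schwarz. Since $f(\w,\cdot)$ is $C_f$-Lipschitz in $\y$ and differentiable, $\|\nabla_2 f(\w,\y)\|\le C_f$ for every $\y$. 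Combining these three facts,
\[
\mu_g\|\y_\lambda^*-\y^*\|^2\le \frac{C_f}{\lambda}\|\y_\lambda^*-\y^*\|.
\]
If $\y_\lambda^*=\y^*$ the claim is trivial. Otherwise we divide by $\|\y_\lambda^*-\y^*\|$ and obtain exactly~(\ref{eq:ll_sol_prox}).

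There is no real obstacle here. The only points needing care are the following:
\begin{itemize}[leftmargin=*]
\item The condition $\lambda\ge 2L_f/\mu_g$ is used only to guarantee that $\y_\lambda^*$ exists uniquely and satisfies the stationarity equation. The bound itself uses the strong convexity of $g$ alone, not that of the penalized function.
\item The Lipschitz constant $C_f$ of $f$ in $\y$ must be converted into the uniform gradient bound $\|\nabla_2 f\|\le C_f$.
\end{itemize}
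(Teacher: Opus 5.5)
Your proof is correct and is essentially the paper's argument. The paper combines the stationarity condition $\nabla_2 f(\w,\y_\lambda^*)+\lambda\nabla_2 g(\w,\y_\lambda^*)=\mathbf{0}$ with the strong-convexity error bound $\|\y^*-\y_\lambda^*\|\le \mu_g^{-1}\|\nabla_2 g(\w,\y_\lambda^*)\|$ and $\|\nabla_2 f\|\le C_f$; your strong-monotonicity-plus-Cauchy--Schwarz step is exactly the derivation of that error bound, written out inline.
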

\begin{proof}
    It holds from definition of $\y_\lambda^*(\w)$ that $\nabla_2 f(\w, \y_\lambda^*(\w)) + \lambda \nabla_2 g(\w, \y_\lambda^*(\w)) = \textbf{0}$. Then we have
    \begin{align*}
        \Norm{\y^*(\w) - \y_\lambda^*(\w)}
        \leq \frac{1}{\mu_g}\Norm{\nabla_2 g(\w, \y_\lambda^*(\w))}
        = \frac{1}{\mu_g\lambda}\Norm{\nabla_2 f(\w, \y_\lambda^*(\w))}
        \leq \frac{C_f}{\mu_g\lambda}  
    \end{align*}
    where the first inequality holds from strongly convexity of $g(\w, \cdot)$.
\end{proof}
\begin{lem}\label{lem:y_lip}
    Under assumption~\ref{asm:func}, for $\lambda\geq \frac{2L_f}{\mu_g}$, $\y_\lambda^*(\cdot)$ is $C_{y*} = \frac{2(L_f+\lambda L_g)}{\lambda \mu_g} = O(\kappa)$-lipschitz.
\end{lem}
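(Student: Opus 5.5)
We prove Lemma~\ref{lem:y_lip} with the standard sensitivity argument for minimizers of strongly convex problems. We use the first-order optimality condition of~(\ref{def:penal_ll_sol}) instead of the implicit function theorem. This avoids any differentiability assumption on $\y_\lambda^*(\cdot)$. Write $h(\w,\y)\coloneqq\lambda^{-1}f(\w,\y)+g(\w,\y)$. As noted before~(\ref{def:penal_ll_sol}), for $\lambda\geq 2L_f/\mu_g$ the map $h(\w,\cdot)$ is $(\mu_g-L_f/\lambda)$-strongly convex. Since $L_f/\lambda\le\mu_g/2$, its modulus is at least $\mu_g/2$. Moreover, $\nabla_2 h$ is jointly $(L_f/\lambda+L_g)$-Lipschitz in $(\w,\y)$, because $f$ is jointly $L_f$-smooth and $g$ is jointly $L_g$-smooth. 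In particular, for fixed $\y$ the map $\w\mapsto\nabla_2 h(\w,\y)$ is $(L_f/\lambda+L_g)$-Lipschitz.

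Fix $\w_1,\w_2$ and set $\y_i\coloneqq\y_\lambda^*(\w_i)$, so that $\nabla_2 h(\w_i,\y_i)=\mathbf{0}$. Strong monotonicity of $\nabla_2 h(\w_1,\cdot)$ gives
\begin{align*}
\frac{\mu_g}{2}\Norm{\y_1-\y_2}^2
\le \inner{\nabla_2 h(\w_1,\y_1)-\nabla_2 h(\w_1,\y_2)}{\y_1-\y_2}
= \inner{\nabla_2 h(\w_2,\y_2)-\nabla_2 h(\w_1,\y_2)}{\y_1-\y_2}.
\end{align*}
The equality uses $\nabla_2 h(\w_1,\y_1)=\mathbf{0}=\nabla_2 h(\w_2,\y_2)$. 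Applying Cauchy--Schwarz and the Lipschitz bound in $\w$ at fixed $\y_2$ bounds the right-hand side by $(L_f/\lambda+L_g)\Norm{\w_1-\w_2}\Norm{\y_1-\y_2}$. Dividing by $\Norm{\y_1-\y_2}$, the case $\y_1=\y_2$ being trivial, yields
\[
\Norm{\y_1-\y_2}\le \frac{2(L_f/\lambda+L_g)}{\mu_g}\Norm{\w_1-\w_2}=\frac{2(L_f+\lambda L_g)}{\lambda\mu_g}\Norm{\w_1-\w_2}.
\]
This is exactly $C_{y*}$.

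For the order claim, note that $\lambda\ge 2L_f/\mu_g$ gives $L_f/\lambda\le\mu_g/2$. Hence $C_{y*}\le 1+2L_g/\mu_g\le 1+2\kappa=O(\kappa)$, using $L_g\le\bar\ell$ and $\kappa\geq 1$.

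No step is a real obstacle. The only point needing care is the strong-convexity modulus of $h(\w,\cdot)$. We use the stated bound $\mu_g-L_f/\lambda\ge\mu_g/2$, which is precisely where the factor $2$ in $C_{y*}$ comes from.
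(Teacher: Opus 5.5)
Your argument is correct and recovers exactly the constant $C_{y*}$, but it follows a different route from the paper.

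The paper differentiates the optimality condition $\nabla_2\L_\lambda(\w,\y_\lambda^*(\w))=\mathbf{0}$ with respect to $\w$, which is an implicit-function-theorem argument. This gives the explicit Jacobian $\nabla\y_\lambda^*(\w)=-\bigl(\lambda^{-1}\nabla^2_{21}f+\nabla^2_{21}g\bigr)\bigl(\lambda^{-1}\nabla^2_{22}f+\nabla^2_{22}g\bigr)^{-1}$. The paper then bounds the first factor in operator norm by $L_f/\lambda+L_g$ and the inverse by $(\mu_g-L_f/\lambda)^{-1}\le 2/\mu_g$.

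You instead combine strong monotonicity of $\nabla_2 h(\w_1,\cdot)$ with Lipschitz continuity of $\nabla_2 h$ in $\w$, which uses only Lipschitz gradients. Your route therefore avoids three things the paper relies on:
the second-order parts of Assumption~\ref{asm:func};
a justification that $\y_\lambda^*$ is differentiable;
the tacit mean-value step that turns a uniform Jacobian bound into a Lipschitz constant.

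In return, the paper's route yields differentiability of $\y_\lambda^*$ together with an explicit formula for its Jacobian. Differentiability is what justifies the chain rule behind~(\ref{eq:penalty-gradient}). The Jacobian formula is also the natural starting point for finer sensitivity bounds that genuinely need the Hessian-Lipschitz constant $Q_g$, such as Lemma~\ref{lem:s_lip}.
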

\begin{proof}
It holds from the definition of $\y_\lambda^*(\w)$ that $\nabla_2 \L_\lambda(\w, \y_\lambda^*(\w)) = \textbf{0}$. Differentiate LHS w.r.t. $\w$, we have 
\begin{align*}
    \nabla_{21}^2 \L_\lambda(\mathbf{w}, y^*(\mathbf{w})) + \nabla y^*(\mathbf{w})\nabla_{22}^2 \L_\lambda(\mathbf{w}, y^*(\mathbf{w}))= \textbf{0}.
\end{align*}
Rearranging the term and multiplying both side by $\nabla_{22}^2 \L_\lambda^*(\mathbf{w}, y^*(\mathbf{w}))^{-1}$ yields
\begin{align}\label{eq:grad_y_lamstar}
    \nabla y^*_\lambda(\w) = -\left(\frac{1}{\lambda}\nabla^2_{21}f(\w, y^*_\lambda(\w)) + \nabla^2_{21}g(\w, y^*_\lambda(\w))\right)\left(\frac{1}{\lambda}\nabla^2_{22}f(\w, y^*_\lambda(\w)) + \nabla^2_{22}g(\w, y^*_\lambda(\w))\right)^{-1}
\end{align}
It finishes the proof by noting that $\nabla^2_{22}f(\w, y^*_\lambda(\w))\preceq L_f\I, \nabla^2_{21}f(\w, y^*_\lambda(\w))\preceq L_f\I$ and $\nabla^2_{22}g(\w, y^*_\lambda(\w))\succeq \mu_g\I, \nabla^2_{21}g(\w, y^*_\lambda(\w))\preceq L_g\I$
\end{proof}
\begin{lem}[Lemma B.5 of~\citet{JMLR:v26:23-1104}]\label{lem:s_lip}
Under assumption~\ref{asm:func}, if $\lambda\geq \frac{2L_f}{\mu_g}$, we have that for any $t$:
\begin{align}\label{eq:s_lip}
    \normsq{\s^*_t - \s^*_{t-1}} = \normsq{\s^*(\w_t) - \s^*(\w_{t-1})}\leq \frac{C_s^2}{\lambda^2}\normsq{\w_t - \w_{t-1}}
\end{align}
% where $C_s = \frac{2(1+\kappa)(L_f+\frac{Q_gC_f}{\mu_g})}{\mu_g} = O(\kappa^3)$. 
where $C_s = (\frac{1}{\mu_g}+\frac{2L_g}{\mu_g^2})(L_f + \frac{C_fQ_g}{\mu_g}) = O(\kappa^3)$. 
\end{lem}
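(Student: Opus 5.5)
We need to bound the Jacobian of $\s^*(\w)=\y_\lambda^*(\w)-\y^*(\w)$ in operator norm by $C_s/\lambda$; integrating along the segment from $\w_{t-1}$ to $\w_t$ then gives~(\ref{eq:s_lip}). So the plan is to write both Jacobians explicitly via the implicit function theorem and estimate their difference.

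\textbf{Step 1 (Jacobians).} As in the proof of Lemma~\ref{lem:y_lip}, write $\nabla\y_\lambda^*(\w)=-A_\lambda B_\lambda^{-1}$, where
\[
A_\lambda=\lambda^{-1}\nabla^2_{21}f+\nabla^2_{21}g, \qquad B_\lambda=\lambda^{-1}\nabla^2_{22}f+\nabla^2_{22}g,
\]
both evaluated at $(\w,\y_\lambda^*(\w))$. Similarly, $\nabla\y^*(\w)=-A B^{-1}$ with $A=\nabla^2_{21}g$ and $B=\nabla^2_{22}g$ evaluated at $(\w,\y^*(\w))$.

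\textbf{Step 2 (decomposition).} Write
\[
\nabla\s^*(\w)=-(A_\lambda-A)B_\lambda^{-1}-A\,(B_\lambda^{-1}-B^{-1}),
\]
and use $B_\lambda^{-1}-B^{-1}=B_\lambda^{-1}(B-B_\lambda)B^{-1}$. The needed norm bounds are:
\begin{itemize}
\item $\|B_\lambda^{-1}\|\le 2/\mu_g$, since $B_\lambda\succeq(\mu_g-L_f/\lambda)\I\succeq\frac{\mu_g}{2}\I$ for $\lambda\ge 2L_f/\mu_g$;
\item $\|B^{-1}\|\le 1/\mu_g$ and $\|A\|\le L_g$.
\end{itemize}

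\textbf{Step 3 (the $O(\lambda^{-1})$ gaps).} This is the crux. Both $A_\lambda-A$ and $B_\lambda-B$ consist of two pieces. The first is the $f$-Hessian piece, of norm at most $L_f/\lambda$. The second is the difference of $g$-Hessian blocks at $\y_\lambda^*(\w)$ and at $\y^*(\w)$. By $Q_g$-Lipschitzness of $\nabla^2 g$ and Lemma~\ref{lem:ll_sol_prox}, this piece is at most $Q_g\|\y_\lambda^*-\y^*\|\le Q_gC_f/(\mu_g\lambda)$. Hence
\[
\|A_\lambda-A\|,\ \|B_\lambda-B\|\le \frac{1}{\lambda}\Bigl(L_f+\frac{C_fQ_g}{\mu_g}\Bigr).
\]

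\textbf{Step 4 (combine).} Putting the pieces together,
\[
\|\nabla\s^*(\w)\|\le \frac{1}{\lambda}\Bigl(L_f+\frac{C_fQ_g}{\mu_g}\Bigr)\Bigl(\frac{2}{\mu_g}+\frac{2L_g}{\mu_g^2}\Bigr).
\]
This matches $C_s$ up to the constant in the first factor. To recover exactly $1/\mu_g$ there, one can order the factorization so that the first term uses $B^{-1}$ rather than $B_\lambda^{-1}$; alternatively, one can simply accept a harmless factor of $2$, which does not affect $C_s=O(\kappa^3)$.

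\textbf{Step 5 (mean value).} Since $\s^*$ is $C^1$ (by twice differentiability of $f$ and Lipschitz Hessians), apply the mean value inequality to get
\[
\|\s^*(\w_t)-\s^*(\w_{t-1})\|\le \sup\|\nabla\s^*\|\,\|\w_t-\w_{t-1}\|,
\]
then square.

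The only delicate step is Step 3: the $\lambda^{-1}$ factor must be extracted from the Hessian difference via Lemma~\ref{lem:ll_sol_prox}, rather than bounding each Jacobian separately. Separate bounds would only give $O(\kappa)$ with no $\lambda^{-1}$ decay.
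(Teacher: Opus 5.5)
Your argument is the standard implicit-function-theorem route and is correct in substance. The paper does not prove this itself; it imports the bound from Lemma B.5 of \citet{JMLR:v26:23-1104}. Steps 1--3 and 5 are fine: the Jacobian bound is uniform in $\w$, so integrating $\nabla\s^*$ along the segment is legitimate. What you actually establish is the inequality with $C_s$ replaced by $\bigl(\frac{2}{\mu_g}+\frac{2L_g}{\mu_g^2}\bigr)\bigl(L_f+\frac{C_fQ_g}{\mu_g}\bigr)\le 2C_s$. That is all the downstream analysis needs, since only $C_s=O(\kappa^3)$ matters there.

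The one inaccurate claim is your Step 4 remark that reordering the factorization recovers the exact constant. Write $D:=\lambda^{-1}(L_f+C_fQ_g/\mu_g)$. The reordered identity
\[
\nabla\s^*(\w)=-(A_\lambda-A)B^{-1}-A_\lambda B_\lambda^{-1}(B-B_\lambda)B^{-1}
\]
does bound the first term by $D/\mu_g$. However, the second term now carries the factor $\|A_\lambda B_\lambda^{-1}\|=\|\nabla\y_\lambda^*(\w)\|\le\frac{L_f/\lambda+L_g}{\mu_g-L_f/\lambda}$ instead of $\|A\|\,\|B_\lambda^{-1}\|$. At $\lambda=2L_f/\mu_g$ this bound equals $1+2L_g/\mu_g$, so the total is again $D\bigl(\frac{2}{\mu_g}+\frac{2L_g}{\mu_g^2}\bigr)$; the factor $2$ has merely moved from one term to the other.

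The reordering does give exactly $C_s/\lambda$ once $\frac{L_f/\lambda+L_g}{\mu_g-L_f/\lambda}\le\frac{2L_g}{\mu_g}$, i.e.\ once $\lambda\ge L_f\bigl(\frac{2}{\mu_g}+\frac{1}{L_g}\bigr)$. This holds, for instance, whenever $\lambda\ge 3L_f/\mu_g$, and is comfortably satisfied by the choice $\lambda=\Theta(\kappa^3/\epsilon)$ in the theorems. So either add that mild restriction on $\lambda$ or state the lemma with constant $2C_s$. As written, Step 4 does not deliver the stated $C_s$ on the full range $\lambda\ge 2L_f/\mu_g$.
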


\subsection{Lemma for Theorem~\ref{thm:std}}
In this section we introduce several important technical lemma for theorem~\ref{thm:std}.
%We start from a non-convex starter[proposition A.3 in~\cite{kwon2024complexity}]: 

The following lemma is a standard descent lemma for smooth function $\L_\lambda^*$.
\begin{lem}\label{lem:descent}
Under assumption~\ref{asm:func},~\ref{asm:oracle}, for $\eta\leq \frac{1}{2L_*}$ it holds that
\begin{align}\label{eq:noncvx_starter}
    \E \L_\lambda^*(\w_{t+1}) \leq \E\L_\lambda^*(\w_t) + \frac{\eta}{2}\E\normsq{\v_{t+1} - \nabla\L_\lambda^*(\w_t)} - \frac{\eta}{4}\E\normsq{\v_{t+1}} - \frac{\eta}{2}\E\normsq{\nabla\L_\lambda^*(\w_t)}.
\end{align}    
\end{lem}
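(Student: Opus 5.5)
We apply the $L_*$-smoothness of $\L_\lambda^*$ from Lemma~\ref{lem:penal_property} (equation~(\ref{eq:sm_surrogate})) along the update $\w_{t+1}=\w_t-\eta\v_{t+1}$:
\begin{align*}
\L_\lambda^*(\w_{t+1})\le \L_\lambda^*(\w_t)-\eta\inner{\nabla\L_\lambda^*(\w_t)}{\v_{t+1}}+\frac{L_*\eta^2}{2}\normsq{\v_{t+1}}.
\end{align*}
This is a deterministic inequality that holds pathwise. The only facts used are that $\w_{t+1}-\w_t=-\eta\v_{t+1}$ and that $\L_\lambda^*$ is $L_*$-smooth. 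The latter requires $\lambda\ge 2L_f/\mu_g$, which we take as implicit, as in Lemma~\ref{lem:penal_property}.

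Next we rewrite the cross term with the polarization identity $-\inner{a}{b}=\frac12\normsq{a-b}-\frac12\normsq{a}-\frac12\normsq{b}$, taking $a=\nabla\L_\lambda^*(\w_t)$ and $b=\v_{t+1}$. This gives
\begin{align*}
\L_\lambda^*(\w_{t+1})\le \L_\lambda^*(\w_t)+\frac{\eta}{2}\normsq{\v_{t+1}-\nabla\L_\lambda^*(\w_t)}-\frac{\eta}{2}\normsq{\nabla\L_\lambda^*(\w_t)}-\Big(\frac{\eta}{2}-\frac{L_*\eta^2}{2}\Big)\normsq{\v_{t+1}}.
\end{align*}
The step-size condition $\eta\le 1/(2L_*)$ gives $L_*\eta^2/2\le \eta/4$. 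Hence the coefficient of $-\normsq{\v_{t+1}}$ is at least $\eta/4$, which yields the stated pathwise bound.

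Finally, we take total expectation of both sides. This requires integrability of $\L_\lambda^*(\w_t)$, $\normsq{\v_{t+1}}$ and $\normsq{\nabla\L_\lambda^*(\w_t)}$. These follow inductively from Assumption~\ref{asm:oracle}: each iterate is an affine function of finitely many stochastic gradients with finite second moments, and $\L_\lambda^*$ has at most quadratic growth by smoothness. The result is~(\ref{eq:noncvx_starter}).

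Every step here is routine and none is an obstacle. The one point to watch is that no unbiasedness of $\v_{t+1}$ is needed or used; the momentum bias is entirely absorbed into the term $\normsq{\v_{t+1}-\nabla\L_\lambda^*(\w_t)}$, which is later controlled by Lemma~\ref{lem:mmt_recur}.
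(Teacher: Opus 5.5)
Your proof is correct and follows the paper's argument: $L_*$-smoothness along $\w_{t+1}-\w_t=-\eta\v_{t+1}$, the polarization identity for the cross term, $L_*\eta\le 1/2$ to absorb the quadratic term, then total expectation. You argue pathwise where the paper conditions on $\E_{t+}$, which changes nothing. Your integrability aside, which the paper omits, is fine in conclusion, but it rests on two facts: the linear growth of $\nabla f,\nabla g$ under joint smoothness, and $\Norm{\z_{t+1}-\y_{t+1}}\le r_\lambda$ from the projection. It does not rest on the iterates being affine in the stochastic gradients, since the projection step is not affine.
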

\begin{proof}[proof of lemma~\ref{lem:descent}].
    The proof is quite standard. We present for completeness. 
\begin{align}
    \E_{t+} \L_\lambda^*(\w_{t+1}) &\leq \L_\lambda^*(\w_t) + \E_{t+}\langle\nabla \L_\lambda^*(\w_t), \w_{t+1} - \w_t\rangle + \frac{L_*}{2}\E_{t+}\normsq{\w_{t+1} - \w_t}\nonumber\\
    &\leq \L_\lambda^*(\w_t) - \eta\E_{t+}\langle\nabla \L_\lambda^*(\w_t), \v_{t+1}\rangle + \frac{\eta^2 L_*}{2}\E_{t+}\normsq{\v_{t+1}}\nonumber\\
    &\leq \L_\lambda^*(\w_t) + \frac{\eta}{2}\left(\E_{t+}\normsq{\v_{t+1} - \nabla\L^*_\lambda(\w_t)} - \E_{t+}\normsq{\v_{t+1}} - \normsq{\nabla\L^*_\lambda(\w_t)}\right) \nonumber\\&\qquad+ \frac{\eta^2 L_*}{2}\E_{t+}\normsq{\v_{t+1}}\nonumber\\
    &\leq \L_\lambda^*(\w_t) + \frac{\eta}{2}\E_{t+}\normsq{\v_{t+1} - \nabla\L^*_\lambda(\w_t)} - \frac{\eta}{4}\E_{t+}\normsq{\v_{t+1}} - \frac{\eta}{2}\normsq{\nabla\L^*_\lambda(\w_t)}    
\end{align}
where the first inequality holds from the $L_*$-smoothness of $\L_\lambda^*(\cdot)$ and the last inequality holds from $\eta\leq \frac{1}{2L_*}$. Finally, taking expectation w.r.t. all the randomness before $t$-th iteration, $\B^t_{fy}$ and $\B^t_{gy}$ finishes the proof by the law of total expectation.
\end{proof}
We then provide a recursive bound for the error of the momentum estimator.
\begin{lem}[Lemma A.2 of~\citet{kwon2024complexity}]\label{lem:grad_w_var}
Under assumption~\ref{asm:func},~\ref{asm:oracle}, it holds that:
\begin{align}\label{eq:w_var}
    \text{var}(\hat{G_{t+1}^w}) \coloneqq \E_{(t+1)+}\normsq{\hat{G}_{t+1}^w - G_{t+1}^w} \leq \frac{1}{B}\left(2\sigma_f^2 + 8\lambda^2\sigma_g^2\right).
\end{align}
If we further assume that assumption~\ref{asm:mss} holds and let $r_\lambda = \frac{C_f}{\mu_g \lambda}$, we have that
\begin{align}\label{eq:w_var_mss}
    \text{var}(\hat{G_{t+1}^w}) \coloneqq \E_{(t+1)+}\normsq{\hat{G}_{t+1}^w - G_{t+1}^w} \leq \frac{1}{B}\left(2\sigma_f^2 + 8\frac{\tilde{L}_g^2C_f^2}{\mu_g^2}\right).
\end{align}
\end{lem}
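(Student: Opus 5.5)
Conditional on the history through iteration $t+1$ and the lower-level batches $\B^{t+1}_{fy},\B^{t+1}_{gy}$, the points $\w_{t+1},\y_{t+2},\z_{t+2}$ are fixed. The only randomness in $\hat G^w_{t+1}$ then comes from the independent batches $\B^{t+1}_{fw},\B^{t+1}_{gw}$, so $G^w_{t+1}=\E_{(t+1)+}[\hat G^w_{t+1}]$. I would write
\[
\hat G^w_{t+1}-G^w_{t+1}=\mathbf{a}+\lambda\,\mathbf{b},
\]
where
\[
\mathbf{a}\coloneqq\nabla_1 f(\w_{t+1},\y_{t+2},\B^{t+1}_{fw})-\nabla_1 f(\w_{t+1},\y_{t+2}),
\]
\[
\mathbf{b}\coloneqq\big[\nabla_1 g(\w_{t+1},\y_{t+2},\B^{t+1}_{gw})-\nabla_1 g(\w_{t+1},\z_{t+2},\B^{t+1}_{gw})\big]-\big[\nabla_1 g(\w_{t+1},\y_{t+2})-\nabla_1 g(\w_{t+1},\z_{t+2})\big].
\]
Applying $\|\mathbf a+\lambda\mathbf b\|^2\le 2\|\mathbf a\|^2+2\lambda^2\|\mathbf b\|^2$ reduces the claim to bounding $\E\|\mathbf a\|^2$ and $\E\|\mathbf b\|^2$.

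\emph{Term $\mathbf a$.} A minibatch of size $B$ averages i.i.d.\ unbiased samples, each with variance at most $\sigma_f^2$ by Assumption~\ref{asm:oracle}. Hence $\E\|\mathbf a\|^2\le\sigma_f^2/B$, which gives the $2\sigma_f^2/B$ contribution.

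\emph{Term $\mathbf b$ under Assumption~\ref{asm:oracle} only.} Split $\mathbf b$ into its two centered pieces, one at $\y_{t+2}$ and one at $\z_{t+2}$. Using $\|\mathbf b\|^2\le 2\|\cdot\|^2+2\|\cdot\|^2$, each piece contributes at most $\sigma_g^2/B$, so $\E\|\mathbf b\|^2\le 4\sigma_g^2/B$. Multiplying by $2\lambda^2$ gives $8\lambda^2\sigma_g^2/B$, which yields \eqref{eq:w_var}.

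\emph{Term $\mathbf b$ under Assumption~\ref{asm:mss}.} Both $g$-evaluations use the same batch $\B^{t+1}_{gw}$, so $\mathbf b$ is the centered batch average of the per-sample differences
\[
\mathbf{d}_\xi=\nabla_1 g(\w_{t+1},\y_{t+2},\xi)-\nabla_1 g(\w_{t+1},\z_{t+2},\xi).
\]
Since the variance of a batch average is at most $(1/B)\E\|\mathbf d_\xi\|^2$, the MSS bound gives
\[
\E\|\mathbf b\|^2\le\frac{\tilde L_g^2}{B}\,\|\y_{t+2}-\z_{t+2}\|^2 .
\]
The projection step~(\ref{eq:z_proj}) guarantees $\|\y_{t+2}-\z_{t+2}\|\le r_\lambda=C_f/(\mu_g\lambda)$ deterministically. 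Therefore $\lambda^2\E\|\mathbf b\|^2\le\tilde L_g^2C_f^2/(\mu_g^2B)$, and the $\lambda$ factors cancel. Keeping the same constant $8$ from the cruder splitting gives \eqref{eq:w_var_mss}.

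The main obstacle is a mismatch between Assumption~\ref{asm:mss} and what this argument uses. The assumption is stated for the full stochastic gradient $\nabla g(\w,\y,\xi_g)$ in $\y$, whereas we need a bound on the $\nabla_1$ block. The fix is to note that $\nabla_1 g$ is a sub-vector of the full gradient $\nabla g$, so its squared difference is bounded by that of $\nabla g$. I would state this interpretation explicitly. I would also check carefully that the conditioning $\E_{(t+1)+}$ makes $\y_{t+2},\z_{t+2}$ measurable and independent of $\B^{t+1}_{fw},\B^{t+1}_{gw}$, since the sampling order in Algorithm~\ref{alg:s2f2bo} ensures exactly this.
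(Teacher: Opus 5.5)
Your argument is correct and is the standard one. The paper gives no proof of its own and simply imports this bound from Lemma A.2 of \citet{kwon2024complexity}; your two Young splittings reproduce exactly the constant $8\lambda^2\sigma_g^2$ in \eqref{eq:w_var}, and in the MSS case your bound via $\frac{1}{B}\E\|\mathbf{d}_\xi\|^2$ gives the sharper constant $2\tilde L_g^2C_f^2/\mu_g^2$, which implies \eqref{eq:w_var_mss}, using the full-gradient reading of Assumption~\ref{asm:mss} that the paper implicitly relies on.
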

\begin{lem}\label{lem:mmt_recur}    
Under assumption~\ref{asm:func},~\ref{asm:oracle}, for $\beta\leq 1$ it holds that
\begin{align}\label{eq:mmt_err_bi_penal}
    \E V_{t+1} \leq (1-\beta)\E V_t + \frac{2L_*^2\eta^2}{\beta}\E\normsq{\v_{t+1}} + 4\beta\E\normsq{G^w_{t+1} - \nabla\L_\lambda^*(\w_{t+1})} + \frac{\beta^2\lambda^2\sigma^2}{B}
\end{align}
where $G_{t+1}^w = \E_{(t+1)+} \hat{G}_{t+1}^w$ and $\lambda^2\sigma^2 = 2\sigma_f^2 + 8\lambda^2\sigma_g^2$. 
\end{lem}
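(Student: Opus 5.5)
The plan is a standard expansion of the moving-average recursion. Here $V_{t+1}=\normsq{\v_{t+2}-\nabla\L_\lambda^*(\w_{t+1})}$ and $\v_{t+2}=(1-\beta)\v_{t+1}+\beta\hat G^w_{t+1}$. Write
\begin{align*}
\v_{t+2}-\nabla\L_\lambda^*(\w_{t+1})
={}&(1-\beta)\bigl(\v_{t+1}-\nabla\L_\lambda^*(\w_t)\bigr)
+(1-\beta)\bigl(\nabla\L_\lambda^*(\w_t)-\nabla\L_\lambda^*(\w_{t+1})\bigr)\\
&+\beta\bigl(G^w_{t+1}-\nabla\L_\lambda^*(\w_{t+1})\bigr)
+\beta\bigl(\hat G^w_{t+1}-G^w_{t+1}\bigr).
\end{align*}
The first three terms, which we call $A$, are measurable with respect to the conditioning of $\E_{(t+1)+}$. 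This holds because $G^w_{t+1}$ depends only on $\w_{t+1},\y_{t+2},\z_{t+2}$, and these are fixed once $\B^{t+1}_{fy},\B^{t+1}_{gy}$ are drawn. The last term has zero conditional mean given that information, since $\B^{t+1}_{fw},\B^{t+1}_{gw}$ are independent fresh batches. Taking $\E_{(t+1)+}$ therefore kills the cross term and gives $\E_{(t+1)+}V_{t+1}=\normsq{A}+\beta^2\E_{(t+1)+}\normsq{\hat G^w_{t+1}-G^w_{t+1}}$. By Lemma~\ref{lem:grad_w_var}, the variance term is at most $\beta^2(2\sigma_f^2+8\lambda^2\sigma_g^2)/B=\beta^2\lambda^2\sigma^2/B$.

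Next bound $\normsq{A}$ using Young's inequality $\normsq{a+b}\le(1+\beta)\normsq{a}+(1+1/\beta)\normsq{b}$. Take $a=(1-\beta)(\v_{t+1}-\nabla\L_\lambda^*(\w_t))$ and let $b$ be the remaining two terms. Since $(1+\beta)(1-\beta)^2\le 1-\beta$, the $a$ part contributes $(1-\beta)V_t$. For $b$, first use $\normsq{b}\le 2(1-\beta)^2\normsq{\nabla\L_\lambda^*(\w_t)-\nabla\L_\lambda^*(\w_{t+1})}+2\beta^2\normsq{G^w_{t+1}-\nabla\L_\lambda^*(\w_{t+1})}$. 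Then apply the $L_*$-smoothness in (\ref{eq:sm_surrogate}) together with $\w_{t+1}-\w_t=-\eta\v_{t+1}$, which bounds the first piece by $L_*^2\eta^2\normsq{\v_{t+1}}$.

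Multiplying by $(1+1/\beta)=(1+\beta)/\beta$ gives two contributions. The smoothness part contributes $2(1+\beta)(1-\beta)^2L_*^2\eta^2\normsq{\v_{t+1}}/\beta\le 2L_*^2\eta^2\normsq{\v_{t+1}}/\beta$. The bias part contributes $2\beta(1+\beta)\normsq{G^w_{t+1}-\nabla\L_\lambda^*(\w_{t+1})}\le4\beta\normsq{G^w_{t+1}-\nabla\L_\lambda^*(\w_{t+1})}$, using $\beta\le1$. Taking total expectation then yields the claimed inequality.

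The only delicate point is the conditioning. We must check that $G^w_{t+1}$ is defined with the $y$-batches of iteration $t+1$ already drawn, so that the $\B_{fw},\B_{gw}$ noise is genuinely conditionally mean zero relative to $A$. Only then does the cross term vanish and leave the variance weighted by $\beta^2$ rather than $\beta$. The remaining coefficient bookkeeping with $(1\pm\beta)$ is routine.
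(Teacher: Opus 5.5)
Your proof is correct and follows essentially the same route as the paper. Both arguments use the same four-term decomposition and drop the cross term by conditioning on $\E_{(t+1)+}$, since the $\B^{t+1}_{fw},\B^{t+1}_{gw}$ noise is independent and mean zero. Both then apply Young's inequality with weights $(1+\beta)$ and $(1+1/\beta)$, followed by $L_*$-smoothness with $\w_{t+1}-\w_t=-\eta\v_{t+1}$. You also spell out the measurability and conditional-mean-zero justification, which the paper leaves implicit when it cites the variance bound.
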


% We then leverage the bound for $\normsq{G^w_t - \nabla\L_\lambda^*(\w_t)}$ in lemma A.1(ii)~\cite{kwon2024complexity} by setting projection radius to be $r_\lambda = \frac{C_f}{\mu_g\lambda}$:
The following lemma decomposes the error of the (expected) hypergradient estimator.
\begin{lem}[lemma A.1(ii) of~\cite{kwon2024complexity}]\label{lem:grad_decomp}
Under assumption~\ref{asm:func}, let $r_\lambda = \frac{C_f}{\mu_g\lambda}$, it holds that
\begin{align}\label{eq:projed_grad_est_unpack}
    \normsq{G^w_t - \nabla\L_\lambda^*(\w_t)}\leq 3L_{y*}^2\normsq{\y_{t+1} - \y^*_\lambda(\w_t)} + 3\lambda^2 L_g^2\normsq{\s_{t+1} - \s^*(\w_t)} + \frac{3C_f^2L_{y*}^2}{\mu_g^2\lambda^2}
\end{align}
where $L_{y*} = L_f + \frac{C_fQ_g}{\mu_g} = O(\bar{\ell}\kappa)$.    
\end{lem}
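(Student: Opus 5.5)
The plan is to compare $G_t^w$ and $\nabla\L_\lambda^*(\w_t)$ term by term and use the projection radius to keep the $\lambda$-amplified Hessian-mismatch term of order one. By Assumption~\ref{asm:oracle}, conditioning on $\y_{t+1},\z_{t+1}$ and averaging only over $\B_{fw}^t,\B_{gw}^t$ gives
\[
G_t^w=\nabla_1 f(\w_t,\y_{t+1})+\lambda\bigl(\nabla_1 g(\w_t,\y_{t+1})-\nabla_1 g(\w_t,\z_{t+1})\bigr).
\]
Subtracting~(\ref{eq:penalty-gradient}) splits the error into an $f$-part and a $\lambda g$-part. The $f$-part is bounded by $L_f\Norm{\y_{t+1}-\y_{\lambda,t}^*}$ using joint smoothness of $f$.

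For the $g$-part I would write each gradient difference as an integral along a segment. Set $\s_{t+1}=\y_{t+1}-\z_{t+1}$ and $\s_t^*=\y_{\lambda,t}^*-\y^*(\w_t)$. Then
\[
\nabla_1 g(\w_t,\y_{t+1})-\nabla_1 g(\w_t,\z_{t+1})=A\,\s_{t+1},\qquad A:=\int_0^1\nabla^2_{12}g\bigl(\w_t,\y_{t+1}-(1-\theta)\s_{t+1}\bigr)\,d\theta,
\]
and similarly $\nabla_1 g(\w_t,\y_{\lambda,t}^*)-\nabla_1 g(\w_t,\y^*(\w_t))=A^*\s_t^*$, with $A^*$ the same integral along the segment from $\y^*(\w_t)$ to $\y_{\lambda,t}^*$. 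This gives
\[
\lambda(A\s_{t+1}-A^*\s_t^*)=\lambda A(\s_{t+1}-\s_t^*)+\lambda(A-A^*)\s_t^*.
\]
The first piece is at most $\lambda L_g\Norm{\s_{t+1}-\s_t^*}$. For the second, the two integration points differ by $(\y_{t+1}-\y_{\lambda,t}^*)-(1-\theta)(\s_{t+1}-\s_t^*)$. The $Q_g$-Lipschitzness of $\nabla^2 g$ then gives
\[
\Norm{A-A^*}\le Q_g\Bigl(\Norm{\y_{t+1}-\y_{\lambda,t}^*}+\tfrac12\Norm{\s_{t+1}-\s_t^*}\Bigr).
\]
Lemma~\ref{lem:ll_sol_prox} gives $\Norm{\s_t^*}\le r_\lambda$, so $\lambda\Norm{\s_t^*}\le C_f/\mu_g$. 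Hence the second piece is at most $\frac{C_fQ_g}{\mu_g}\Norm{\y_{t+1}-\y_{\lambda,t}^*}+\frac{C_fQ_g}{2\mu_g}\Norm{\s_{t+1}-\s_t^*}$.

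The $\lambda$-free term $\frac{C_fQ_g}{2\mu_g}\Norm{\s_{t+1}-\s_t^*}$ cannot be merged with the others without losing the form of~(\ref{eq:projed_grad_est_unpack}), and handling it is the main obstacle. This is exactly where the projection~(\ref{eq:z_proj}) enters. It guarantees $\Norm{\s_{t+1}}\le r_\lambda$, so $\Norm{\s_{t+1}-\s_t^*}\le 2r_\lambda$. The offending term is therefore at most $\frac{C_f^2Q_g}{\mu_g^2\lambda}\le\frac{C_fL_{y*}}{\mu_g\lambda}$, which becomes the constant bias.

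Collecting the pieces, with $L_{y*}=L_f+C_fQ_g/\mu_g$,
\[
\Norm{G_t^w-\nabla\L_\lambda^*(\w_t)}\le L_{y*}\Norm{\y_{t+1}-\y_{\lambda,t}^*}+\lambda L_g\Norm{\s_{t+1}-\s_t^*}+\frac{C_fL_{y*}}{\mu_g\lambda}.
\]
Squaring with $(a+b+c)^2\le 3(a^2+b^2+c^2)$ yields~(\ref{eq:projed_grad_est_unpack}). Finally, $L_{y*}=O(\bar\ell\kappa)$ follows directly from the definition of $\kappa$.
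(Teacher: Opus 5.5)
Your proof is correct and gives the inequality with exactly the stated constants. The paper does not prove this lemma itself. It imports the bound from Lemma A.1(ii) of \citet{kwon2024complexity} and only remarks that mean-square smoothness is unnecessary, because $G_t^w$ is a deterministic function of $(\w_t,\y_{t+1},\z_{t+1})$. Your argument makes that remark concrete: it uses only unbiasedness (to identify $G_t^w$), the smoothness and Hessian-Lipschitz parts of Assumption~\ref{asm:func}, Lemma~\ref{lem:ll_sol_prox}, and the projection constraint $\Norm{\s_{t+1}}\le r_\lambda$.

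A common alternative is a first-order Taylor expansion of $\nabla_1 g(\w_t,\cdot)$ around $\y_{t+1}$ and $\y^*_{\lambda,t}$. Its remainders satisfy $\frac{\lambda Q_g}{2}(\normsq{\s_{t+1}}+\normsq{\s_t^*})\le\lambda Q_g r_\lambda^2$, which gives the same bias $C_f^2Q_g/(\mu_g^2\lambda)$. Your exact integral representation instead isolates the single place where the projection enters, namely the $\lambda$-free term $\frac{C_fQ_g}{2\mu_g}\Norm{\s_{t+1}-\s_t^*}$.

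Your route actually yields slightly more. If you keep that term rather than bounding it by $2r_\lambda$, you obtain
\[
\Norm{G_t^w-\nabla\L_\lambda^*(\w_t)}\le L_{y*}\Norm{\y_{t+1}-\y^*_{\lambda,t}}+\Bigl(\lambda L_g+\frac{C_fQ_g}{2\mu_g}\Bigr)\Norm{\s_{t+1}-\s_t^*},
\]
with no constant bias and no use of the projection for this lemma. Once $\lambda\ge C_fQ_g/(2\mu_gL_g)$, this squares to $2L_{y*}^2Y_t+8\lambda^2L_g^2S_t$. That condition is implied by $\lambda\ge\kappa^2/2$, since $C_f,Q_g\le\bar\ell$ and $L_g\ge\mu_g$, and hence by $\lambda=\Theta(\kappa^3/\epsilon)$ for small $\epsilon$.

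This does not improve the final rates, because Lemma~\ref{lem:penal_property} already forces $\lambda=\Theta(\kappa^3/\epsilon)$. It does show that the $O(\lambda^{-2})$ term in the lemma is an artifact of the bounding rather than a genuine bias of the estimator. Indeed, the error vanishes when $\y_{t+1}=\y^*_{\lambda,t}$ and $\z_{t+1}=\y^*(\w_t)$.
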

\textbf{Remark:} The original lemma in~\citet{kwon2024complexity} requires (MSS) assumption~\ref{asm:mss}. However, note that LHS doesn't involve any randomness/stochastic gradient(when $\w_t$ is fixed), we point out that this assumption is not needed. In fact, $\tilde L_g$ doesn't appear in the RHS.
% From lemma 3.2 in~\cite{kwon2023fully} we have that $\y^*_\lambda(\w_t)$ is $C_{y*} = \frac{2(L_f+\lambda L_g)}{\lambda \mu_g} = O(\kappa)$-lipschitz, we can handle the contraction of the 1st term in RHS in a classic way while the most difficult part is the second term.
\begin{proof}[proof of lemma~\ref{lem:mmt_recur}]
The proof is quite standard and we present for completeness.
\begin{align}
    \E_{(t+1)+} V_{t+1} &= \E_{(t+1)+} \normsq{\beta \hat{G}_{t+1}^w + (1-\beta)\v_{t+1} - \nabla\L_\lambda^*(\w_{t+1})}\nonumber\\
    &= \E_{(t+1)+} \|\beta(\hat{G}_{t+1}^w - G_{t+1}^w) + \beta(G_{t+1}^w - \nabla\L_\lambda^*(\w_{t+1}))\nonumber\\ &\qquad+ (1-\beta)(\v_{t+1} - \nabla\L_\lambda^*(\w_t) + \nabla\L_\lambda^*(\w_t) - \nabla\L_\lambda^*(\w_{t+1}))\|^2\nonumber\\
    &\leq \frac{\beta^2\lambda^2\sigma^2}{B} + (1-\beta)^2(1+\beta)\normsq{\v_{t+1} - \nabla\L_\lambda^*(\w_t)}\nonumber\\ &\qquad+ (1+\frac{1}{\beta})\left(2\beta^2\normsq{G_{t+1}^w - \nabla\L_\lambda^*(\w_{t+1})} + 2(1-\beta)^2\normsq{\nabla\L_\lambda^*(\w_t) - \nabla\L_\lambda^*(\w_{t+1})}\right)\nonumber\\
    &\leq \frac{\beta^2\lambda^2\sigma^2}{B} + (1-\beta)\normsq{\v_{t+1} - \nabla\L_\lambda^*(\w_t)}\nonumber\\ &\qquad+ 4\beta\normsq{G_{t+1}^w - \nabla\L_\lambda^*(\w_{t+1})} + \frac{2}{\beta}\normsq{\nabla\L_\lambda^*(\w_t) - \nabla\L_\lambda^*(\w_{t+1})}\nonumber\\
    % &\leq \beta^2\lambda^2\sigma^2 + \beta^2(1+\frac{1}{\beta})\E_t\normsq{G_{t+1}^w - \nabla\L_\lambda^*(\w_{t+1})}\nonumber\\ &\qquad+ (1-\beta)\left((1+\frac{\beta}{2})\normsq{\v_{t+1} - \nabla\L_\lambda^*(\w_t)} + (1+\frac{2}{\beta})\normsq{\nabla\L_\lambda^*(\w_t) - \nabla\L_\lambda^*(\w_{t+1})}\right)\nonumber\\
    &\leq \frac{\beta^2\lambda^2\sigma^2}{B} + (1-\beta)\normsq{\v_{t+1} - \nabla\L_\lambda^*(\w_t)} + 4\beta\normsq{G_{t+1}^w - \nabla\L_\lambda^*(\w_{t+1})} + \frac{2L_*^2\eta^2}{\beta}\normsq{\v_{t+1}}
\end{align}
where the first inequality holds from~\ref{eq:w_var} and young's inequality, the second inequality holds from $\beta\leq 1$. Finally, taking expectation w.r.t. all the randomness before $t+1$-th iteration, $\B^{t+1}_{fy}$ and $\B^{t+1}_{gy}$ finishes the proof by the law of total expectation.
\end{proof}

\begin{lem}[Descent lemma for $\y$]\label{lem:y_recur}    
Under assumption~\ref{asm:func},~\ref{asm:oracle}, for $\gamma \leq \frac{\mu_y}{L_y^2}$, we have 
\begin{align}\label{eq:y_desc}
    \E Y_t \leq (1-\frac{\gamma\mu_y}{2}) \E Y_{t-1} + \frac{2C_{y*}^2\eta^2}{\gamma\mu_y}\E \normsq{\v_{t}} + \frac{\gamma^2\sigma_y^2}{B}
\end{align}
where $\sigma_y^2 = \frac{2\sigma_f^2}{\lambda^2} + 2\sigma_g^2 = O(1)$ and $C_{y*} = \frac{2(L_f+\lambda L_g)}{\lambda \mu_g} = O(\kappa)$.
\end{lem}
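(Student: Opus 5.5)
The plan is the usual one-step SGD analysis for a strongly convex objective whose minimizer drifts, because $\w$ changes between iterations. Here $\mu_y$ and $L_y$ denote the strong convexity and smoothness constants of $\lambda^{-1}f(\w,\cdot)+g(\w,\cdot)$: $\mu_y=\mu_g-L_f/\lambda\ge\mu_g/2$ and $L_y=L_g+L_f/\lambda$. Recall $Y_t=\normsq{\y_{t+1}-\y^*_{\lambda,t}}$ and $Y_{t-1}=\normsq{\y_t-\y^*_{\lambda,t-1}}$. First split off the drift. By Young's inequality with parameter $\gamma\mu_y/2$,
\[
\normsq{\y_t-\y^*_{\lambda,t}}\le\Big(1+\tfrac{\gamma\mu_y}{2}\Big)Y_{t-1}+\Big(1+\tfrac{2}{\gamma\mu_y}\Big)\normsq{\y^*_{\lambda,t}-\y^*_{\lambda,t-1}}.
\]
By Lemma~\ref{lem:y_lip} and $\w_t-\w_{t-1}=-\eta\v_t$, the last term is at most $C_{y*}^2\eta^2\normsq{\v_t}$.

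Second, I contract toward the fixed target $\y^*_{\lambda,t}$ under $\E_t$. Since $\hat G_t^y$ uses fresh batches $\B_{fy}^t,\B_{gy}^t$ independent of $\w_t,\y_t$, it is unbiased for $\nabla_2\L_\lambda(\w_t,\y_t)/\lambda$. Its variance is at most $(2\sigma_f^2/\lambda^2+2\sigma_g^2)/B=\sigma_y^2/B$, using $\normsq{a+b}\le2\normsq a+2\normsq b$ and averaging over the batch. Then
\[
\E_t\normsq{\y_{t+1}-\y^*_{\lambda,t}}=\normsq{\y_t-\y^*_{\lambda,t}}-2\gamma\inner{\nabla h_t(\y_t)}{\y_t-\y^*_{\lambda,t}}+\gamma^2\normsq{\nabla h_t(\y_t)}+\gamma^2\E_t\normsq{\hat G^y_t-\nabla h_t(\y_t)},
\]
where $h_t=\lambda^{-1}f(\w_t,\cdot)+g(\w_t,\cdot)$ and $\nabla h_t(\y^*_{\lambda,t})=\mathbf 0$. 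Strong convexity gives the inner product bound $\ge\mu_y\normsq{\y_t-\y^*_{\lambda,t}}$, and smoothness gives $\normsq{\nabla h_t(\y_t)}\le L_y^2\normsq{\y_t-\y^*_{\lambda,t}}$. With $\gamma\le\mu_y/L_y^2$, the deterministic part contracts by $1-2\gamma\mu_y+\gamma^2L_y^2\le1-\gamma\mu_y$.

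Third, combine the two steps and take total expectation. The factor is $(1-\gamma\mu_y)(1+\gamma\mu_y/2)\le1-\gamma\mu_y/2$. The drift coefficient is $(1-\gamma\mu_y)(1+2/(\gamma\mu_y))\le 2/(\gamma\mu_y)$, since $(1-x)(1+2/x)=2/x-1-x\le 2/x$. This produces exactly $\frac{2C_{y*}^2\eta^2}{\gamma\mu_y}\E\normsq{\v_t}+\frac{\gamma^2\sigma_y^2}{B}$.

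The main obstacle is measurability bookkeeping. $\w_t$ depends on $\v_t$, which is built from $\hat G^w_{t-1}$ and hence from $\y_t$. So the drift term must be handled before taking $\E_t$, and the fresh $\y$-batches at iteration $t$ must be independent of everything up to $\w_t$. Both hold by construction of Algorithm~\ref{alg:s2f2bo}.
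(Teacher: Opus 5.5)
Your proof is correct and matches the paper's argument essentially line for line. Both use the SGD contraction $1-2\gamma\mu_y+\gamma^2L_y^2\le 1-\gamma\mu_y$ toward $\y^*_{\lambda,t}$, the Young split with parameter $\gamma\mu_y/2$, and the bounds $(1-\gamma\mu_y)(1+\gamma\mu_y/2)\le 1-\gamma\mu_y/2$ and $(1-\gamma\mu_y)(1+2/(\gamma\mu_y))\le 2/(\gamma\mu_y)$, together with Lemma~\ref{lem:y_lip}.

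The only difference is ordering: the paper takes $\E_t$ first and splits off the drift afterwards. That is equally valid because $\w_t,\w_{t-1},\v_t,\y_t$ are all determined by the history before iteration $t$, so the ordering you describe as necessary is not actually forced.
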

\begin{proof}[proof of lemma~\ref{lem:y_recur}]
\begin{align}
    \E_t Y_t &= \E_t \normsq{\y_t - \gamma \hat{G}_t^y - \y_{\lambda, t}^*}\nonumber\\
    &= \normsq{\y_t - \y_{\lambda, t}^*} - 2\gamma\E_t\langle\hat{G}_t^y, \y_t - \y_{\lambda, t}^*\rangle + \gamma^2\E_t\normsq{\hat{G}_t^y} \nonumber\\
    &\leq \normsq{\y_t - \y_{\lambda, t}^*} - 2\gamma\mu_y\normsq{\y_t - \y_{\lambda, t}^*} + \gamma^2\left(L_y^2\normsq{\y_t - \y_{\lambda, t}^*} + \frac{\sigma_y^2}{B}\right) \nonumber\\
    &\leq (1-\gamma\mu_y) \normsq{\y_t - \y_{\lambda, t}^*} + \frac{\gamma^2\sigma_y^2}{B} \nonumber\\
    &\leq (1-\gamma\mu_y)(1+\frac{\gamma\mu_y}{2}) \normsq{\y_t - \y_{\lambda, t-1}^*} + (1-\gamma\mu_y)(1+\frac{2}{\gamma\mu_y})\normsq{\y_{\lambda, t-1}^* - \y_{\lambda, t}^*} + \frac{\gamma^2\sigma_y^2}{B} \nonumber\\
    &\leq (1-\frac{\gamma\mu_y}{2})\normsq{\y_t - \y_{\lambda, t-1}^*} + \frac{2C_{y*}^2\eta^2}{\gamma\mu_y}\normsq{\v_{t}} + \frac{\gamma^2\sigma_y^2}{B}  
\end{align}
where $\mu_y = \mu_g - \frac{L_f}{\lambda}$ and $L_y = \frac{L_f}{\lambda} + L_g$ are the strongly convexity coefficient and the smoothness coefficient of $\L_y^\lambda(\w_t, \cdot) = \frac{1}{\lambda}f(\w_t, \cdot) + g(\w_t, \cdot)$. The last equality holds from $G_t^{y*} = \nabla_2 \L_y^\lambda(\w_t, \y_{\lambda, t}^*) = \textbf{0}$ and lemma~\ref{lem:y_lip}. The first inequality holds from the strongly convexity and smoothness of $\L_y^\lambda(\w_t, \cdot)$ and the second inequality holds from $\gamma \leq \frac{\mu_y}{L_y^2}$. Finally, taking expectation w.r.t. all the randomness before $t$-th iteration, $\B^t_{fy}$ and $\B^t_{gy}$ finishes the proof by the law of total expectation.
\end{proof}

% \paragraph{Aug 24, 2026 Update}
% We now provide a new version of proof that can patch the issue. Instead of directly leveraging the strongly convexity and get the problematic lower bound for cross term, we make a finer analysis for it. Below we first present a new corrected lower bound for cross term that need to be handled when developing the descent lemma for $\s$.
\begin{lem}\label{lem:cross_gradnormsq_lb}
Under assumption~\ref{asm:func}, we have that the following two inequality hold:
\begin{align}
    \langle G_t^s, \s_t - \s_t^*\rangle &\geq \frac{\mu_g}{2}\normsq{\s_t - \s_t^*} - \frac{1}{\mu_g} \left(\frac{1}{\lambda^2} L_f^2 + Q_g^2r_\lambda^2\right)\normsq{\y_t - \y_\lambda^*(\w_t)}\label{eq:s_cross_lb},\\
    \normsq{G_t^s} &\leq 2L_g^2\normsq{\s_t - \s_t^*} + 2\left(\frac{2}{\lambda^2} L_f^2 + 2Q_g^2r_\lambda^2\right)\normsq{\y_t - \y_\lambda^*(\w_t)}.\label{eq:s_gradnorm_lb}   
\end{align}    
\end{lem}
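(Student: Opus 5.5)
The plan is to write $G_t^s$ through the mapping $H$ from~(\ref{eq:H_method}). Since $\E_t\hat G_t^y=\lambda^{-1}\nabla_2 f(\w_t,\y_t)+\nabla_2 g(\w_t,\y_t)$ and $\E_t\hat G_t^z=\nabla_2 g(\w_t,\z_t)$ with $\z_t=\y_t-\s_t$, we have $G_t^s=H(\w_t,\y_t,\s_t)$. The reference point is $\y=\y^*_{\lambda,t}$, $\s=\s_t^*=\y^*_{\lambda,t}-\y^*(\w_t)$. There
\[
H(\w_t,\y^*_{\lambda,t},\s_t^*)=\lambda^{-1}\nabla_2 f(\w_t,\y^*_{\lambda,t})+\nabla_2 g(\w_t,\y^*_{\lambda,t})-\nabla_2 g(\w_t,\y^*(\w_t))=\mathbf{0},
\]
because both optimality conditions hold. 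Split
\[
G_t^s=\big[H(\w_t,\y_t,\s_t)-H(\w_t,\y_t,\s_t^*)\big]+\big[H(\w_t,\y_t,\s_t^*)-H(\w_t,\y^*_{\lambda,t},\s_t^*)\big]=:A+D.
\]

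For $A$: for fixed $\y$, $H(\w,\y,\s)-H(\w,\y,\s')=\nabla_2 g(\w,\y-\s')-\nabla_2 g(\w,\y-\s)$. By strong convexity of $g(\w,\cdot)$ (strong monotonicity of its gradient), $\langle A,\s_t-\s_t^*\rangle\ge\mu_g\|\s_t-\s_t^*\|^2$, and by $L_g$-smoothness $\|A\|\le L_g\|\s_t-\s_t^*\|$.

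The key step is bounding $D$ by $O(\lambda^{-1})\|\y_t-\y^*_{\lambda,t}\|$ with no $O(1)$ term. Write $\Delta=\y_t-\y^*_{\lambda,t}$. Then
\[
D=\lambda^{-1}\big[\nabla_2 f(\w_t,\y_t)-\nabla_2 f(\w_t,\y^*_{\lambda,t})\big]+\big[\phi(\y_t)-\phi(\y^*_{\lambda,t})\big],\qquad \phi(\y):=\nabla_2 g(\w_t,\y)-\nabla_2 g(\w_t,\y-\s_t^*).
\]
The first bracket has norm at most $L_f\|\Delta\|/\lambda$. For the second, write $\phi(\y)=\int_0^1\nabla_{22}^2 g(\w_t,\y-\s_t^*+\theta\s_t^*)\,\s_t^*\,d\theta$. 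The Hessian is $Q_g$-Lipschitz, which gives $\|\phi(\y_t)-\phi(\y^*_{\lambda,t})\|\le Q_g\|\s_t^*\|\,\|\Delta\|\le Q_g r_\lambda\|\Delta\|$ by Lemma~\ref{lem:ll_sol_prox}. Hence $\|D\|\le (L_f/\lambda+Q_g r_\lambda)\|\Delta\|$, so
\[
\|D\|^2\le 2\left(\frac{L_f^2}{\lambda^2}+Q_g^2r_\lambda^2\right)\|\Delta\|^2 .
\]
I expect this to be the main obstacle: a naive Lipschitz bound on $\phi$ gives $2L_g\|\Delta\|$, and it is the second-order cancellation through $Q_g$ and $\|\s^*\|\le r_\lambda$ that produces the $O(\lambda^{-2})$ coefficient.

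To conclude:
\begin{itemize}
\item \emph{Inner product bound~(\ref{eq:s_cross_lb}).} Use $\langle D,\s_t-\s_t^*\rangle\ge-\frac{\mu_g}{2}\|\s_t-\s_t^*\|^2-\frac{1}{2\mu_g}\|D\|^2$ by Young's inequality. Inserting the bound on $\|D\|^2$ gives $\frac{\mu_g}{2}\|\s_t-\s_t^*\|^2-\frac{1}{\mu_g}(\frac{L_f^2}{\lambda^2}+Q_g^2r_\lambda^2)\|\Delta\|^2$, which is exactly~(\ref{eq:s_cross_lb}).
\item \emph{Norm bound~(\ref{eq:s_gradnorm_lb}).} Use $\|G_t^s\|^2\le 2\|A\|^2+2\|D\|^2\le 2L_g^2\|\s_t-\s_t^*\|^2+2\cdot 2(\frac{L_f^2}{\lambda^2}+Q_g^2r_\lambda^2)\|\Delta\|^2$, which matches the stated constants.
\end{itemize}
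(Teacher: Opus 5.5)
Your proposal is correct and follows essentially the same route as the paper. Both write $G_t^s=H(\w_t,\y_t,\s_t)$ and split off $H(\w_t,\y_t,\s_t^*)-H(\w_t,\y_\lambda^*(\w_t),\s_t^*)$; both use $\mu_g$-strong monotonicity and $L_g$-Lipschitzness of $H$ in $\s$, Young's inequality, and the integral form of the $\nabla_2 g$ difference together with the $Q_g$-Lipschitz Hessian and $\|\s_t^*\|\le r_\lambda$ to get the $O(\lambda^{-2})$ coefficient, and you recover exactly the stated constants.
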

\begin{proof}[proof of lemma~\ref{lem:cross_gradnormsq_lb}]    
We first show that the mapping $H(\w,\y, \s) \coloneqq \frac{1}{\lambda}\nabla_2 f(\w,\y) + \nabla_2 g(\w, \y) - \nabla_2 g(\w, \y-\s)$ is $\mu_g$-strongly monotone and $L_g$ lipschitz in $\s$, i.e. 
\begin{align}
    \langle H(\w, \y, \s_1) - H(\w, \y, \s_2), \s_1 - \s_2\rangle &\geq \mu_g \normsq{\s_1 - \s_2} ,\label{eq:H_smnt_cross}\\
    \mu_g^2\normsq{\s_1 - \s_2} \leq \normsq{H(\w, \y, \s_1) - H(\w, \y, \s_2)} &\leq L_g^2\normsq{\s_1 - \s_2}. \label{eq:H_smnt_sq}
\end{align}
The proof is straightforward, we directly plug the definition of $H$ in the LHS of~\ref{eq:H_smnt_cross}:
\begin{align}
    \text{LHS} &= \langle \nabla_2 g(\w, \y-\s_2) - \nabla_2 g(\w, \y-\s_1), \s_1 - \s_2\rangle \nonumber\\
    &= \langle \nabla_2 g(\w, \y-\s_2) - \nabla_2 g(\w, \y-\s_1), (\y - \s_2) - (\y - \s_1)\rangle \nonumber\\
    &\geq \mu_g\normsq{\s_1 - \s_2}
\end{align}
where the inequality holds from the strongly convexity of $g$. Then the first inequality of~\ref{eq:H_smnt_sq} holds obviously by combining the fact that 
\[\Norm{H(\w, \y, \s_1) - H(\w, \y, \s_2)}\Norm{\s_1 - \s_2}\geq \langle H(\w, \y, \s_1) - H(\w, \y, \s_2), \s_1 - \s_2\rangle\]
with~\ref{eq:H_smnt_cross} while the second inequality holds from smoothness of $g$. \\
We note that $G_t^s = H(\w_t, \y_t, \s_t)$ and that 
\begin{align*}
    H(\w_t, \y_\lambda^*(\w_t), \s^*_t) &= H(\w_t, \y_\lambda^*(\w_t), \y_\lambda^*(\w_t) - \y^*(\w_t)) \\
    &= \frac{1}{\lambda}\nabla_2 f(\w_t,\y_\lambda^*(\w_t)) + \nabla_2 g(\w_t, \y_\lambda^*(\w_t)) - \nabla_2 g(\w_t, \y^*(\w_t))\\
    &= \textbf{0}-\textbf{0} = \textbf{0},
\end{align*}
then the cross term becomes:
\begin{align}
    \langle G_t^s, \s_t - \s_t^*\rangle &= \langle H(\w_t, \y_t, \s_t) - H(\w_t, \y_t, \s^*_t), \s_t - \s_t^*\rangle\nonumber\\&\qquad + \langle H(\w_t, \y_t, \s^*_t) - H(\w_t, \y_\lambda^*(\w_t), \s^*_t), \s_t - \s_t^*\rangle\nonumber\\
    &\geq \mu_g\normsq{\s_t - \s_t^*}+ \langle H(\w_t, \y_t, \s^*_t) - H(\w_t, \y_\lambda^*(\w_t), \s^*_t), \s_t - \s_t^*\rangle\nonumber\\
    &\geq \mu_g\normsq{\s_t - \s_t^*} - \frac{1}{2\mu_g} \normsq{H(\w_t, \y_t, \s^*_t) - H(\w_t, \y_\lambda^*(\w_t), \s^*_t)} - \frac{\mu_g}{2} \normsq{\s_t - \s_t^*}\nonumber\\
    &\geq \frac{\mu_g}{2}\normsq{\s_t - \s_t^*} - \frac{1}{2\mu_g} \normsq{H(\w_t, \y_t, \s^*_t) - H(\w_t, \y_\lambda^*(\w_t), \s^*_t)} .\label{eq:cross_split}
\end{align}
For the second term in RHS, we note that $H(\w, \y, \s) =  \frac{1}{\lambda}\nabla_2 f(\w,\y) + \int_0^1\nabla^2_{22} g(\w, \y-\tau\s)\s d\tau$, then
\begin{align}
    \normsq{H(\w_t, \y_t, \s^*_t) - H(\w_t, \y_\lambda^*(\w_t), \s^*_t)} &\leq \frac{2}{\lambda^2} L_f^2\normsq{\y_t - \y_\lambda^*(\w_t)} \nonumber\\ + 2\normsq{\s^*}&\normsq{\int_0^1\left(\nabla^2_{22} g(\w, \y_t-\tau\s) - \nabla^2_{22} g(\w, \y_\lambda^*(\w_t)-\tau\s)\right) d\tau} \nonumber\\
    &\leq \frac{2}{\lambda^2} L_f^2\normsq{\y_t - \y_\lambda^*(\w_t)} + 2Q_g^2\normsq{\s}\normsq{\y_t - \y_\lambda^*(\w_t)} \nonumber\\
    &\leq \left(\frac{2}{\lambda^2} L_f^2 + 2Q_g^2r_\lambda^2\right)\normsq{\y_t - \y_\lambda^*(\w_t)}.\label{eq:H_y_lips}
\end{align}
Plugging~\ref{eq:H_y_lips} into~\ref{eq:cross_split} leads to~\ref{eq:s_cross_lb}. The~\ref{eq:s_gradnorm_lb} can be proved in a similar way: 
\begin{align*}
    \normsq{G_t^s} &\leq 2\normsq{H(\w_t, \y_t, \s_t) - H(\w_t, \y_t, \s_t^*)} + 2\normsq{H(\w_t, \y_t, \s_t^*)}\nonumber\\
    &\leq 2L_g^2\normsq{\s_t - \s_t^*} + 2\left(\frac{2}{\lambda^2} L_f^2 + 2Q_g^2r_\lambda^2\right)\normsq{\y_t - \y_\lambda^*(\w_t)}.
\end{align*}
The second inequality holds from~\ref{eq:H_smnt_sq} and~\ref{eq:H_y_lips}.
\end{proof}

\begin{prop}[Proposition 3.4 from~\cite{kwon2024complexity}]\label{prop:s_proj_desc}
Under assumption~\ref{asm:func}, for any $t$, with $r_\lambda = \frac{C_f}{\lambda\mu_g}$, 
    \begin{align}
        \Norm{\s_{t+1} - \s_t^*} \leq \Norm{\bar{\s}_t - \s_t^*}.
    \end{align}
\end{prop}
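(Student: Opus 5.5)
The plan is to rewrite the projection step in \eqref{eq:z_proj} as a projection of the gap variable onto a ball centered at the origin. Then the claim follows from nonexpansiveness of Euclidean projection onto a closed convex set, together with the fact that the target gap $\s_t^*$ lies in that ball. Throughout, $\bar{\s}_t\coloneqq \y_{t+1}-\bar{\z}_t$ denotes the pre-projection gap and $\s_{t+1}=\y_{t+1}-\z_{t+1}$ the post-projection gap.

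\emph{Step 1: translation and reflection invariance of ball projections.} For any center $\c$ and radius $r$, $\Pi_{\Bb(\c,r)}(\u)=\c+\Pi_{\Bb(\mathbf{0},r)}(\u-\c)$, since $\u'\mapsto \u'-\c$ is an isometry mapping $\Bb(\c,r)$ onto $\Bb(\mathbf{0},r)$. Moreover, $\Pi_{\Bb(\mathbf{0},r)}(-\u)=-\Pi_{\Bb(\mathbf{0},r)}(\u)$, because the ball is symmetric; explicitly, $\Pi_{\Bb(\mathbf{0},r)}(\u)=\u\min\{1,r/\Norm{\u}\}$. Applying both facts with $\c=\y_{t+1}$ and $\u=\bar{\z}_t$ gives
\[
\z_{t+1}-\y_{t+1}=\Pi_{\Bb(\mathbf{0},r_\lambda)}(\bar{\z}_t-\y_{t+1})=-\Pi_{\Bb(\mathbf{0},r_\lambda)}(\bar{\s}_t),
\]
that is, $\s_{t+1}=\Pi_{\Bb(\mathbf{0},r_\lambda)}(\bar{\s}_t)$.

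\emph{Step 2: feasibility of the target.} By Lemma~\ref{lem:ll_sol_prox}, which requires $\lambda\ge 2L_f/\mu_g$ (a standing condition whenever $\y_\lambda^*$ is defined), we have $\Norm{\s_t^*}=\Norm{\y_\lambda^*(\w_t)-\y^*(\w_t)}\le C_f/(\mu_g\lambda)=r_\lambda$. Hence $\s_t^*\in\Bb(\mathbf{0},r_\lambda)$ and $\Pi_{\Bb(\mathbf{0},r_\lambda)}(\s_t^*)=\s_t^*$. This is exactly where the choice $r_\lambda=C_f/(\mu_g\lambda)$ is used.

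\emph{Step 3: nonexpansiveness.} Euclidean projection onto a nonempty closed convex set is $1$-Lipschitz. Therefore
\[
\Norm{\s_{t+1}-\s_t^*}=\Norm{\Pi_{\Bb(\mathbf{0},r_\lambda)}(\bar{\s}_t)-\Pi_{\Bb(\mathbf{0},r_\lambda)}(\s_t^*)}\le\Norm{\bar{\s}_t-\s_t^*}.
\]
There is no real obstacle. The only point needing care is Step 1: the algorithm projects $\z$ around $\y_{t+1}$, not $\s$ around the origin, so one must check that the centering and the sign flip between $\bar{\z}_t-\y_{t+1}$ and $\bar{\s}_t$ cancel correctly.
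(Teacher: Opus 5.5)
Your proof is correct and is essentially the argument behind the cited Proposition 3.4 of \citet{kwon2024complexity} (the paper gives no separate proof). Recentering gives $\s_{t+1}=\Pi_{\Bb(\mathbf{0},r_\lambda)}(\bar{\s}_t)$, Lemma~\ref{lem:ll_sol_prox} places $\s_t^*$ in that ball, and nonexpansiveness of the projection finishes it; you are also right to flag that the standing condition $\lambda\ge 2L_f/\mu_g$, needed for Lemma~\ref{lem:ll_sol_prox}, is implicit in the statement's hypotheses.
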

\begin{lem}[Descent lemma for $\s$]\label{lem:s_desc}   
Under assumption~\ref{asm:func},~\ref{asm:oracle}, let $\gamma \leq \min\{\frac{\mu_g}{4L_g^2}, \frac{1}{2\mu_g}\}$, then it holds that
\begin{align}\label{eq:s_desc_pre}
    \E S_t &\leq \E\normsq{\bar{\s}_t - \s_t^*}\nonumber\\
    &\leq \left(1-\frac{\gamma\mu_g}{4}\right)\E S_{t-1} + \frac{4}{\gamma\mu_g}\E \normsq{\s^*_t - \s^*_{t-1}} + \frac{8\gamma}{\mu_g}\left(\frac{L_f^2}{\lambda^2}+Q_g^2r_\lambda^2\right)\left(\E Y_{t-1} + \eta^2C_{y*}^2\E\normsq{\v_t} \right) \nonumber\\&\qquad+ \frac{2\gamma^2}{B}(\sigma_y^2 + \sigma_g^2).
\end{align}
\end{lem}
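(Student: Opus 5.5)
The first inequality comes straight from Proposition~\ref{prop:s_proj_desc}. Here $\bar{\s}_t \coloneqq \y_{t+1}-\bar{\z}_t$ and $\s_{t+1}=\y_{t+1}-\z_{t+1}$, so $S_t=\normsq{\s_{t+1}-\s_t^*}\le\normsq{\bar{\s}_t-\s_t^*}$ holds pointwise; taking expectations gives it. The real work is the one-step bound on $\normsq{\bar{\s}_t-\s_t^*}$. Since $\y_{t+1}=\y_t-\gamma\hat G_t^y$ and $\bar{\z}_t=\z_t-\gamma\hat G_t^z$, we have $\bar{\s}_t=\s_t-\gamma(\hat G_t^y-\hat G_t^z)$, with conditional mean $\E_t[\hat G_t^y-\hat G_t^z]=G_t^s$. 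Expanding the square gives
\begin{align*}
\E_t\normsq{\bar{\s}_t-\s_t^*}=\normsq{\s_t-\s_t^*}-2\gamma\langle G_t^s,\s_t-\s_t^*\rangle+\gamma^2\normsq{G_t^s}+\gamma^2\E_t\normsq{\hat G_t^y-\hat G_t^z-G_t^s}.
\end{align*}
The variance term is at most $2(\sigma_y^2+\sigma_g^2)/B$, by $(a+b)^2\le 2a^2+2b^2$ and the per-sample variance bounds already used in Lemma~\ref{lem:y_recur}. Correlations from the shared batch $\B_{gy}^t$ are harmless under this crude split.

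Next I apply Lemma~\ref{lem:cross_gradnormsq_lb}. Inserting \eqref{eq:s_cross_lb} and \eqref{eq:s_gradnorm_lb} and writing $c_\lambda\coloneqq L_f^2/\lambda^2+Q_g^2r_\lambda^2$, the coefficient of $\normsq{\s_t-\s_t^*}$ becomes $1-\gamma\mu_g+2\gamma^2L_g^2\le 1-\gamma\mu_g/2$, using $\gamma\le\mu_g/(4L_g^2)$. The coefficient of $\normsq{\y_t-\y_{\lambda,t}^*}$ becomes $2\gamma c_\lambda/\mu_g+4\gamma^2c_\lambda\le 4\gamma c_\lambda/\mu_g$, using $\gamma\le 1/(2\mu_g)$ (which gives $4\gamma^2\le 2\gamma/\mu_g$).

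Two shifts then re-index everything to time $t-1$.

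\textbf{$\s$-term.} Since $\s_t-\s_t^*=(\s_t-\s_{t-1}^*)+(\s_{t-1}^*-\s_t^*)$, Young's inequality with parameter $\gamma\mu_g/4$ gives
\[
(1-\tfrac{\gamma\mu_g}{2})\normsq{\s_t-\s_t^*}\le(1-\tfrac{\gamma\mu_g}{4})S_{t-1}+\tfrac{4}{\gamma\mu_g}\normsq{\s_t^*-\s_{t-1}^*}.
\]
This uses $(1-a)(1+a/2)\le 1-a/2$ with $a=\gamma\mu_g/2$, and $(1-a)(1+2/a)\le 2/a$.

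\textbf{$\y$-term.} Since $\y_t-\y_{\lambda,t}^*=(\y_t-\y_{\lambda,t-1}^*)+(\y_{\lambda,t-1}^*-\y_{\lambda,t}^*)$, and $\y_\lambda^*$ is $C_{y*}$-Lipschitz (Lemma~\ref{lem:y_lip}) with $\w_t-\w_{t-1}=-\eta\v_t$, we get
\[
\normsq{\y_t-\y_{\lambda,t}^*}\le 2Y_{t-1}+2\eta^2C_{y*}^2\normsq{\v_t}.
\]
This yields exactly the factor $8\gamma c_\lambda/\mu_g$ in the statement. Taking total expectations finishes the proof.

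The main obstacle is bookkeeping the constants so that the $\gamma^2\normsq{G_t^s}$ contribution is absorbed without degrading the contraction. That step is where both stepsize conditions are needed. A second point to check carefully is that $\s_t$ (not $\bar{\s}_{t-1}$) is the object entering $S_{t-1}$; this holds because $S_{t-1}=\normsq{\s_t-\s_{t-1}^*}$ by definition.
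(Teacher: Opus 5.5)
Your proposal is correct and follows the paper's proof essentially step for step. You use projection nonexpansiveness for the first inequality, expand $\E_t\normsq{\bar{\s}_t-\s_t^*}$ with the crude $2(\sigma_y^2+\sigma_g^2)/B$ variance split, and invoke Lemma~\ref{lem:cross_gradnormsq_lb} to obtain the $(1-\gamma\mu_g/2)$ contraction, after which two Young's-inequality shifts re-index to time $t-1$. The only cosmetic difference is that you absorb the $4\gamma^2 c_\lambda$ term via $\gamma\le 1/(2\mu_g)$ before splitting $\y_t-\y_{\lambda,t}^*$, whereas the paper splits first and then uses $1+2\gamma\mu_g\le 2$; the constants come out identical.
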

\begin{proof}[proof of lemma~\ref{lem:s_desc}]
The first inequality holds from proposition~\ref{prop:s_proj_desc}. We now prove the second inequality. 
\begin{align}\label{eq:s_pre_derive}
    \E_t\normsq{\bar{\s}_t - \s_t^*} &= \E_t\normsq{\s_t - \gamma \hat{G}_t^s - \s_t^*}\nonumber\\
    &= \E_t\normsq{\s_t - \s_t^*} + \gamma^2(\text{var}(\hat{G}_t^s) + \E_t\normsq{G_t^s}) - 2\gamma\langle G_t^s, \s_t - \s_t^*\rangle\nonumber\\
    &\leq \E_t\normsq{\s_t - \s_t^*} + \gamma^2\left(\frac{2\sigma_y^2 + 2\sigma_g^2}{B} + 2L_g^2\normsq{\s_t - \s_t^*}\right)\nonumber\\
    &\qquad + {\color{blue} 2\gamma^2\left(\frac{2}{\lambda^2} L_f^2 + 2Q_g^2r_\lambda^2\right)\normsq{\y_t - \y_\lambda^*(\w_t)}}\nonumber\\
    &\qquad  - \gamma\mu_g\normsq{\s_t - \s_t^*}{\color{blue}+ \frac{2\gamma}{\mu_g} \left(\frac{1}{\lambda^2} L_f^2 + Q_g^2r_\lambda^2\right)\normsq{\y_t - \y_\lambda^*(\w_t)}}\nonumber\\
    &\leq (1-\frac{\gamma\mu_g}{2})\E_t\normsq{\s_t - \s_t^*} + \frac{2\gamma^2}{B}(\sigma_y^2 + \sigma_g^2)\nonumber\\
    & {\color{blue}+ \frac{4\gamma(1+2\gamma\mu_g)}{\mu_g} \left(\frac{1}{\lambda^2} L_f^2 + Q_g^2r_\lambda^2\right)\left(\normsq{\y_t - \y_\lambda^*(\w_{t-1})} + \normsq{\y_\lambda^*(\w_{t-1}) - \y_\lambda^*(\w_t)} \right)}\nonumber\\
    &\leq (1-\frac{\gamma\mu_g}{4})\E_t\normsq{\s_t - \s_{t-1}^*} + \frac{4}{\gamma\mu_g}\normsq{\s_t^* - \s_{t-1}^*} + \frac{2\gamma^2}{B}(\sigma_y^2 + \sigma_g^2)\nonumber\\
    &\qquad {\color{blue}+ \frac{8\gamma}{\mu_g} \left(\frac{1}{\lambda^2} L_f^2 + Q_g^2r_\lambda^2\right)\left(Y_{t-1} + \eta^2C_{y*}^2\normsq{\v_t} \right)}
\end{align}
where $\hat{G}_t^s = \hat{G}_t^y - \hat{G}_t^z$, the last inequality holds from Young's inequality. And the blue part is the newly introduced term according to the corrected analysis of cross term presented in lemma~\ref{lem:cross_gradnormsq_lb}. Finally, taking expectation w.r.t. all the randomness before $t$-th iteration, $\B^t_{fy}$ and $\B^t_{gy}$ finishes the proof by the law of total expectation.
\end{proof}

% From lemma A.7 in~\cite{kwon2024complexity}, we have that 
% \[\normsq{\s^*_t - \s^*_{t-1}} \leq O(\frac{\eta^2}{\lambda^2})\normsq{\v_t} + O(\frac{1}{\lambda^4})\]
% the second term would finally results in a $O(\frac{1}{\gamma^2\lambda^2})$ term in the desired bound, which \emph{cannot} be at the level of $\epsilon^2$.
% % \textbf{Possible direction}: We need to build convergence of $\s^*\to 0$ when $\w\to\w^*$.
% Below we show a stronger version of lemma A.7 in~\cite{kwon2024complexity}:

\subsection{Lemma for Theorem~\ref{thm:mss}}
Below we provide better bound with the extra assumption~\ref{asm:mss} for recursion of $V_t$ and $S_t$.\\
Note that the proof given the extra mean-square smoothness assumption is quite similar to the corresponding lemma in the previous section, we only highlight the difference instead of provide the complete proof.

\begin{lem}[Recursive bound for momentum with MSS]\label{lem:mmt_recur_mss}    
Under assumption~\ref{asm:func},~\ref{asm:oracle} and~\ref{asm:mss}, for $\beta\leq 1$ it holds that
\begin{align}\label{eq:mmt_err_mss}
    \E V_{t+1} \leq (1-\beta)\E V_t + \frac{2L_*^2\eta^2}{\beta}\E\normsq{\v_{t+1}} + 4\beta\E\normsq{G^w_{t+1} - \nabla\L_\lambda^*(\w_{t+1})} + \frac{\beta^2\tilde\sigma^2}{B}
\end{align}
where $G_{t+1}^w = \E_{(t+1)+} \hat{G}_{t+1}^w$ and $\tilde\sigma^2 = 2\sigma_f^2 + 8\frac{\tilde L_g^2C_f^2}{\mu_g^2} = O(\bar{\ell}^2\kappa^2)$. 
% From lemma 3.2 in~\cite{kwon2023fully} we have that $\y^*_\lambda(\w_t)$ is $C_{y*} = \frac{2(L_f+\lambda L_g)}{\lambda \mu_g} = O(\kappa)$-lipschitz, we can handle the contraction of the 1st term in RHS in a classic way while the most difficult part is the second term.
\end{lem}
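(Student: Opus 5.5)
The proof repeats the argument of Lemma~\ref{lem:mmt_recur} line for line. The only change is that the variance bound \eqref{eq:w_var} is replaced by its mean-square-smooth counterpart \eqref{eq:w_var_mss} from Lemma~\ref{lem:grad_w_var}.

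Fix the history up to iteration $t+1$ together with $\B^{t+1}_{fy},\B^{t+1}_{gy}$. Then $\y_{t+2},\z_{t+2},\w_{t+1},\v_{t+1}$ are all determined, and the only remaining randomness is in $\B^{t+1}_{fw},\B^{t+1}_{gw}$. We write
\[
\v_{t+2}-\nabla\L_\lambda^*(\w_{t+1}) = \beta(\hat G^w_{t+1}-G^w_{t+1}) + \beta(G^w_{t+1}-\nabla\L_\lambda^*(\w_{t+1})) + (1-\beta)(\v_{t+1}-\nabla\L_\lambda^*(\w_t)) + (1-\beta)(\nabla\L_\lambda^*(\w_t)-\nabla\L_\lambda^*(\w_{t+1})).
\]
The first term has zero conditional mean and is independent of the other three, which are deterministic given this conditioning. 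Hence the squared norm splits into the variance term plus the deterministic part.

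By \eqref{eq:w_var_mss}, the variance term is at most $\beta^2\tilde\sigma^2/B$. This is where Assumption~\ref{asm:mss} enters. The two $g$-evaluations in $\hat G^w_{t+1}$ share the batch $\B^{t+1}_{gw}$, and the projection guarantees $\Norm{\y_{t+2}-\z_{t+2}}\le r_\lambda$. Together these make the $\lambda$-scaled difference have variance at most $\lambda^2\tilde L_g^2 r_\lambda^2 = \tilde L_g^2C_f^2/\mu_g^2$. So the $\lambda^2$ factor disappears.

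For the deterministic part we apply Young's inequality with weights $(1+\beta)$ and $(1+1/\beta)$, exactly as in Lemma~\ref{lem:mmt_recur}. Using $(1-\beta)^2(1+\beta)\le 1-\beta$ and $(1+1/\beta)\cdot 2\beta^2\le 4\beta$ for $\beta\le1$, and then $L_*$-smoothness from \eqref{eq:sm_surrogate} with $\w_{t+1}-\w_t=-\eta\v_{t+1}$, the last term is bounded by $\frac{2L_*^2\eta^2}{\beta}\normsq{\v_{t+1}}$. Taking total expectation gives \eqref{eq:mmt_err_mss}.

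There is no real obstacle here. The one point to verify is the order of the constant. From the definition of $\bar\ell$, $\tilde L_g C_f/\mu_g\le \bar\ell\kappa$, so $\tilde\sigma^2=O(\bar\ell^2\kappa^2)$ (treating $\sigma_f$ as $O(\bar\ell\kappa)$ or smaller). The bound \eqref{eq:w_var_mss} is used as a black box from Lemma~\ref{lem:grad_w_var}.
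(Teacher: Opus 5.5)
Your proposal is correct and matches the paper's proof. The paper simply reruns the argument of Lemma~\ref{lem:mmt_recur} (the same decomposition under $\E_{(t+1)+}$, Young's inequality with weights $1+\beta$ and $1+1/\beta$, then $L_*$-smoothness), swapping \eqref{eq:w_var} for the mean-square-smooth variance bound \eqref{eq:w_var_mss}, and your checks of the constants and of $\tilde\sigma^2=O(\bar\ell^2\kappa^2)$ agree with it.
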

\begin{proof}
The proof is almost the same as that of lemma~\ref{lem:mmt_recur} except using~\ref{eq:w_var_mss} instead of~\ref{eq:w_var} to bound the variance term of $\hat{G}_t^w$.    
\end{proof}

% \begin{lem}[Descent lemma for $\y$ with MSS]    
% Denote $Y_t = \normsq{\y_{t+1} - \y^*_{\lambda,t}}$. If $\gamma \leq \frac{\mu_y}{L_y^2}$, we have 
% \begin{align}\label{eq:y_desc_mss}
%     \E Y_t \leq (1-\frac{\gamma\mu_y}{2})\E Y_{t-1} + \frac{2C_{y*}^2\eta^2}{\gamma\mu_y}\E\normsq{\v_{t}} + \frac{\gamma^2\sigma_y^2}{B}
% \end{align}
% where $\sigma_y^2 = \frac{2\sigma_f^2}{\lambda^2} + 2\sigma_g^2 = O(1)$ and $C_{y*} = \frac{2(L_f+\lambda L_g)}{\lambda \mu_g} = O(\kappa)$
% \end{lem}
% We do not provide a separate descent lemma for $\y$ here since MSS assumption won't change the way we build recursive bound for error of $\y$.

\begin{lem}[Descent lemma for $\s$ with MSS]   
Suppose that assumption~\ref{asm:func},~\ref{asm:oracle} and~\ref{asm:mss} hold. Let $\gamma \leq \min\{\frac{\mu_g}{4L_g^2}, \frac{1}{2\mu_g}\}$, it holds that
\begin{align}\label{eq:s_desc_pre_mss}
    \E S_t &\leq \E\normsq{\bar{\s}_t - \s_t^*}\nonumber\\
    &\leq \left(1-\frac{\gamma\mu_g}{4}\right)\E S_{t-1} + \frac{4}{\gamma\mu_g}\E\normsq{\s^*_t - \s^*_{t-1}} + \frac{8\gamma}{\mu_g}\left(\frac{L_f^2}{\lambda^2}+Q_g^2r_\lambda^2\right)\left(\E Y_{t-1} + \eta^2C_{y*}^2\E\normsq{\v_t} \right) \nonumber\\&\qquad+ \frac{2\gamma^2}{B\lambda^2}(\sigma_f^2 + \frac{4C_f^2\tilde L_g^2}{\mu_g^2}).
\end{align}
\end{lem}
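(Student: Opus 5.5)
The proof will repeat that of Lemma~\ref{lem:s_desc} line by line. The only change is the bound on the conditional variance of the stochastic direction $\hat G_t^s=\hat G_t^y-\hat G_t^z$. The first inequality $\E S_t\le \E\normsq{\bar\s_t-\s_t^*}$ is again Proposition~\ref{prop:s_proj_desc}; the projection argument uses no stochastic structure. For the second inequality we expand
\[
\E_t\normsq{\bar\s_t-\s_t^*}=\normsq{\s_t-\s_t^*}+\gamma^2\big(\text{var}(\hat G_t^s)+\normsq{G_t^s}\big)-2\gamma\inner{G_t^s}{\s_t-\s_t^*}
\]
as in~(\ref{eq:s_pre_derive}). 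Lemma~\ref{lem:cross_gradnormsq_lb} controls the cross term and $\normsq{G_t^s}$ exactly as before. Hence every term other than the variance carries over unchanged, including the blue $Y$-terms, the Young's-inequality step shifting $\s_t^*\to\s_{t-1}^*$, and the shift $\y_\lambda^*(\w_t)\to\y_\lambda^*(\w_{t-1})$ via Lemma~\ref{lem:y_lip}. The step-size conditions $\gamma\le\min\{\mu_g/(4L_g^2),1/(2\mu_g)\}$ are used in the same places.

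The new ingredient is the variance bound. Because $\hat G_t^y$ and $\hat G_t^z$ share the batch $\B_{gy}^t$, we write
\[
\hat G_t^s-G_t^s=\lambda^{-1}\big(\nabla_2 f(\w_t,\y_t,\B_{fy}^t)-\nabla_2 f(\w_t,\y_t)\big)+\Big(\big[\nabla_2 g(\w_t,\y_t,\B_{gy}^t)-\nabla_2 g(\w_t,\z_t,\B_{gy}^t)\big]-\big[\nabla_2 g(\w_t,\y_t)-\nabla_2 g(\w_t,\z_t)\big]\Big).
\]
Applying $\normsq{a+b}\le 2\normsq a+2\normsq b$ gives $2\sigma_f^2/(B\lambda^2)$ for the first piece. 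For the second piece we use the fact that the variance of a batch average is at most $1/B$ times the second moment of a single-sample difference. Assumption~\ref{asm:mss} then yields the bound $2\tilde L_g^2\normsq{\y_t-\z_t}/B$.

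The key point, and the one place requiring care, is that $\normsq{\y_t-\z_t}\le r_\lambda^2=C_f^2/(\mu_g^2\lambda^2)$. This holds because $\z_t$ is produced by the projection~(\ref{eq:z_proj}) onto $\Bb(\y_t,r_\lambda)$, and for $t=0$ by the initialization $\z_0=\Pi_{\Bb(\y_0,r_\lambda)}(\z_{-1})$. Consequently
\[
\text{var}(\hat G_t^s)\le \frac{1}{B\lambda^2}\Big(2\sigma_f^2+\frac{2\tilde L_g^2C_f^2}{\mu_g^2}\Big)\le\frac{1}{B\lambda^2}\Big(\sigma_f^2+\frac{4C_f^2\tilde L_g^2}{\mu_g^2}\Big)\cdot 2 .
\]
Multiplied by $\gamma^2$, this is at most $\frac{2\gamma^2}{B\lambda^2}(\sigma_f^2+4C_f^2\tilde L_g^2/\mu_g^2)$, with room to spare in the constants.

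Substituting this in place of $\frac{2\gamma^2}{B}(\sigma_y^2+\sigma_g^2)$ and taking total expectation gives~(\ref{eq:s_desc_pre_mss}). The only potential obstacle is making sure the variance is taken conditionally on $\y_t,\z_t$ so that the deterministic radius bound applies; since both are measurable with respect to the history before iteration $t$, this is immediate.
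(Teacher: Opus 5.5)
Your proposal is correct and follows the paper's proof. Both rerun the derivation of Lemma~\ref{lem:s_desc} and change only the variance of $\hat G_t^s$, which is bounded via Assumption~\ref{asm:mss} on the shared batch $\B_{gy}^t$ together with the projection guarantee $\Norm{\y_t-\z_t}\le r_\lambda$. The one difference is cosmetic. You bound the centered $g$-difference by its uncentered second moment, which gives $2\tilde L_g^2$ where the paper's split into $4+4$ terms gives $8\tilde L_g^2$. Your bound therefore implies the stated one.
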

\begin{proof}
    The main difference to the proof of lemma~\ref{lem:s_desc} is that we leverage a better bound for the variance term of $\hat{G}_t^s$ given the MSS assumption:
\begin{align}
    \text{var}(\hat{G}_t^s) &= \E_t\normsq{\hat{G}_t^s - G_t^s}\nonumber\\
    &\leq \frac{1}{B}\left(\frac{2}{\lambda^2}\E_t\normsq{\nabla_2 f(\w_t, \y_t, \xi^t_{fy}) - \nabla_2 f(\w_t, \y_t)}\right.\nonumber\\&\qquad + 4\E_t\normsq{\nabla_2 g(\w_t, \y_t, \xi^t_{gy}) - \nabla_2 g(\w_t, \z_t, \xi^t_{gy})}\nonumber\\ 
    &\qquad\left. + 4\normsq{\nabla_2 g(\w_t, \y_t) - \nabla_2 g(\w_t, \z_t)}\right)\nonumber\\
    &\leq \frac{1}{B}\left(\frac{2\sigma_f^2}{\lambda^2} + 8\tilde L_g^2\normsq{\y_t - \z_t}\right)\nonumber\\
    &\leq \frac{1}{B}\left(\frac{2\sigma_f^2}{\lambda^2} + 8 \frac{C_f^2\tilde L_g^2}{\mu_g^2\lambda^2}\right)
\end{align}
where $\xi^t_{fy}, \xi^t_{gy}$ are randomly sample from $\B^t_{fy}, \B^t_{gy}$. For the following derivation, we just need to follow inequality~\ref{eq:s_pre_derive} but replace with the above bound for the variance term.
\end{proof}
\section{Proof of Theorem~\ref{thm:std}}
\label{sec:std}
\begin{proof}[Proof of theorem~\ref{thm:std}]     
% We summarize important bound we obtain so far, which are derived from~\ref{eq:noncvx_starter}, 
By plugging~\ref{eq:projed_grad_est_unpack} into ~\ref{eq:mmt_err_bi_penal} and plugging~\ref{eq:s_lip} into~\ref{eq:s_desc_pre} we have:
% \begin{align}
%     \E_t \L_\lambda^*(\w_{t+1}) \leq \L_\lambda^*(\w_t) + \frac{\eta}{2}\normsq{\v_{t+1} - \nabla\L_\lambda^*(\w_t)} - \frac{\eta}{4}\normsq{\v_{t+1}} - \frac{\eta}{2}\normsq{\nabla\L_\lambda^*(\w_t)},
% \end{align}
\begin{align}\label{eq:mmt_err_recur}
    \E V_{t+1} &\leq (1-\beta)\E V_t + \frac{2L_*^2\eta^2}{\beta}\E\normsq{\v_{t+1}} + 12\beta L_{y*}^2\E\normsq{\y_{t+2} - \y^*_\lambda(\w_{t+1})}\nonumber\\
    &\qquad+ 12\beta\lambda^2 L_g^2\E\normsq{\s_{t+2} - \s^*(\w_{t+1})} + \frac{12\beta C_f^2L_{y*}^2}{\mu_g^2\lambda^2} + \frac{\beta^2\lambda^2\sigma^2}{B}\nonumber\\
    &=(1-\beta)\E V_t + \frac{2L_*^2\eta^2}{\beta}\E\normsq{\v_{t+1}} + 12\beta L_{y*}^2\E Y_{t+1}+ 12\beta\lambda^2 L_g^2\E S_{t+1} + \frac{12\beta C_f^2L_{y*}^2}{\mu_g^2\lambda^2} + \frac{\beta^2\lambda^2\sigma^2}{B},
\end{align}
% \begin{align}\label{eq:y_desc}
%     \E Y_t \leq (1-\frac{\gamma\mu_y}{2})Y_{t-1} + \frac{2C_{y*}^2\eta^2}{\gamma\mu_y}\normsq{\v_t} + \frac{\gamma^2\sigma_y^2}{B},
% \end{align}
\begin{align}\label{eq:s_desc}
    \E S_t &\leq \E\normsq{\bar{\s}_t - \s_t^*}\nonumber\\
    &\leq \left(1-\frac{\gamma\mu_g}{4}\right)\E S_{t-1} + \frac{4C_s^2\eta^2}{\lambda^2\gamma\mu_g}\E\normsq{\v_t} + \frac{8\gamma}{\mu_g}\left(\frac{L_f^2}{\lambda^2}+Q_g^2r_\lambda^2\right)\left(\E Y_{t-1} + \eta^2C_{y*}^2\E\normsq{\v_t} \right)\nonumber\\&\qquad + \frac{2\gamma^2}{B}(\sigma_y^2 + \sigma_g^2).
\end{align}

Combining~\ref{eq:noncvx_starter}, ~\ref{eq:mmt_err_recur}, ~\ref{eq:y_desc} and~\ref{eq:s_desc}, then summing over $t=0, 1, \cdots, T-1,$ we have
\begin{align}\label{eq:bi_penal_main_bound}
    \frac{1}{T}\sum_{t=0}^{T-1}\E\normsq{\nabla\L_\lambda^*(\w_t)}&\leq \frac{2(\L_\lambda^*(\w_0) - \inf_\w\L_\lambda^*(\w))}{\eta T} + \frac{\E V_0}{\beta T}\nonumber\\&\qquad + \frac{24\left((L_{y*}^2+32\kappa^2(L_f^2 + \frac{Q_g^2C_f^2}{\mu_g^2})) \E Y_1+2\lambda^2L_g^2\E S_1\right)}{\gamma\mu_g T}\nonumber\\
    & - \left(\frac{1}{2} - \frac{2L_*^2\eta^2}{\beta^2} - \frac{48\eta^2\left(C_{y*}^2(L_{y*}^2 + 32\kappa^2(L_f^2 + \frac{Q_g^2C_f^2}{\mu_g^2})) + 4L_g^2C_s^2\right)}{\gamma^2\mu_g^2}\right. \nonumber\\ &\qquad \left. - 384\eta^2\kappa^2C_{y*}^2(L_f^2+\frac{Q_g^2C_f^2}{\mu_g^2})\right)\frac{1}{T}\sum_{t=0}^{T-1}\E\normsq{\v_{t+1}}\nonumber\\
    & + \frac{\beta \lambda^2\sigma^2}{B} + \frac{24\gamma\sigma_y^2}{\mu_g B}\left(L_{y*}^2+32\kappa^2(L_f^2 + \frac{Q_g^2C_f^2}{\mu_g^2})\right) + \frac{12C_f^2L_{y*}^2}{\mu_g^2\lambda^2}\nonumber\\&\qquad + \frac{96\gamma\lambda^2L_g^2}{\mu_g B}(\sigma_y^2 + \sigma_g^2).
\end{align}
% To make RHS$\leq \epsilon^2$, we require that $\beta\sigma^2\leq O(\epsilon^2), \gamma\lambda^2\leq O(\epsilon^2), \eta\leq O(\min\{\beta, \gamma\}), T\geq \Omega(\frac{1}{\eta\epsilon^2})$, where $\beta = \Theta(\gamma) = \Theta(\eta) = \Theta(\epsilon^4), T = O(\epsilon^{-6})$ satisfy all.
Denote $\Delta=\L_\lambda^*(\w_0) - \inf_\w\L_\lambda^*(\w)$. With lemma~\ref{lem:penal_property} we have that
\[\Delta \leq F(\w_0) - \inf_\w F(\w) + O(\frac{\bar\ell\kappa^2}{\lambda})\leq O(1)\]
Note that $\v_1 = \beta \hat{G}_0^w = O(\beta\lambda)$, we have:
\[\E V_0\leq 2\E\normsq{\v_1} + 2\normsq{\nabla \L_\lambda^*(\w_0)}\leq O(\beta^2\lambda^2) + 4L_*\Delta\leq O(\beta^2\lambda^2+\kappa^3).\]
With eqn.~\ref{eq:y_desc} we have that
\[\E Y_1 \leq (1-\frac{\gamma\mu_y}{2}) Y_0 + \frac{2C_{y*}^2\eta^2}{\gamma\mu_y}\normsq{\v_1} + \frac{\gamma^2\sigma_y^2}{B}.\]
According the setting of $\beta,\gamma$ and $\lambda$ that we show later, the second and the last term will be dominated by the first term. Therefore, we have $\E Y_1 = O(Y_0)$. Besides, 
\[Y_0 = \normsq{\y_1 - \y_\lambda^*(\w_0)}=  \normsq{\y_0 - \gamma\hat{G}_t^y - \y_\lambda^*(\w_0)}\leq 2\normsq{\y_0 - \y_\lambda^*(\w_0)} + 2\gamma^2\normsq{\hat{G}_0^y}\leq O(\frac{1}{\kappa}), \]
where the last inequality holds from the warm up stage for $\y$. Combining the 2 inequalities above yields that $\E Y_1\leq O(\frac{1}{\kappa})$. In addition, $\E S_1\leq 2\E \normsq{\s_2} + 2\E\normsq{\s^*(\w_1)}\leq O(r_\lambda^2) = O(\frac{\kappa^2}{\lambda^2})$. 
% \begin{align}
%     Y_0 &= \normsq{\y_1 - \y_\lambda^*(\w_0)}\nonumber\\
%     &=  \normsq{\y_0 - \gamma\hat{G}_t^y - \y_\lambda^*(\w_0)}\nonumber\\
%     &\leq 2\normsq{\y_0 - \y_\lambda^*(\w_0)} + 2\gamma^2\normsq{\hat{G}_0^y}\nonumber\\
%     &\leq O(\frac{1}{\kappa})    
% \end{align}

To make RHS of eqn.~\ref{eq:bi_penal_main_bound}$\leq \epsilon^2$ with constant level batchsize $B=O(1)$, we require the following condition holds:
\begin{align}
    \beta\lambda^2\sigma^2 \leq O(\epsilon^2), \gamma\bar{\ell}\kappa^5\leq O(\epsilon^2), \frac{\bar{\ell}^2\kappa^4}{\lambda^2}\leq O(\epsilon^2), \bar{\ell}\kappa\gamma\lambda^2\leq O(\epsilon^2),\\
    \max\left\{\frac{L_*^2\eta^2}{\beta^2}, \frac{\eta^2O(\kappa^8)}{\gamma^2}, \eta^2\bar{\ell}^2\kappa^6\right\} \leq O(1),\\
    \frac{\Delta}{\eta T}\leq O(\epsilon^2), \frac{V_0}{\beta T}\leq O(\epsilon^2), \frac{O(\bar{\ell}\kappa^5 Y_1 + \lambda^2\bar{\ell}\kappa S_1)}{\gamma T}\leq O(\epsilon^2).
\end{align}
Therefore, by setting $\lambda = \Theta(\frac{\kappa^3}{\epsilon}), \beta = \Theta(\frac{\epsilon^4}{\kappa^6}), \gamma = \Theta(\frac{\epsilon^4}{\kappa^7}), \eta = \Theta(\frac{\epsilon^4}{\kappa^{11}})$, it's sufficient for algorithm~\ref{alg:s2f2bo} to return an $O(\epsilon)$-stationary point of $\L_\lambda^*(\w)$ within $T = O(\frac{\kappa^{11}}{\epsilon^6})$ iterations. Then according to lemma~\ref{lem:penal_property}, it is also an $O(\epsilon)$-stationary point of $F(\w)$. %Besides, if we add an extra "warm-up" stage before starting algorithm~\ref{alg:s2f2bo} to get a better starting point of $\y$, i.e. $Y_1 = O(1/\kappa)$ which can be satisfied by a simple SGD with $\tilde O(\kappa)$-iteration due to the strong convexity of the sub-problem of $\y$, we can improve the dependence of total complexity on $\kappa$ to be $T = O(\frac{\kappa^{11}}{\epsilon^6})$.
\end{proof}

\section{Proof of Theorem~\ref{thm:mss}}\label{sec:mss}
The proof of theorem~\ref{thm:mss} is quite similar to that of theorem~\ref{thm:std} except that we need to leverage better recursion lemma of $\v$ and $\s$. 
\begin{proof}[Proof of theorem~\ref{thm:mss}]     
By plugging~\ref{eq:projed_grad_est_unpack} into ~\ref{eq:mmt_err_mss} and plugging~\ref{eq:s_lip} into~\ref{eq:s_desc_pre_mss}, we have
% \begin{align}
%     \E_t \L_\lambda^*(\w_{t+1}) \leq \L_\lambda^*(\w_t) + \frac{\eta}{2}\normsq{\v_{t+1} - \nabla\L_\lambda^*(\w_t)} - \frac{\eta}{4}\normsq{\v_{t+1}} - \frac{\eta}{2}\normsq{\nabla\L_\lambda^*(\w_t)}
% \end{align}
\begin{align}\label{eq:mmt_err_recur_mss}
    \E V_{t+1} &\leq (1-\beta)\E V_t + \frac{2L_*^2\eta^2}{\beta}\E\normsq{\v_{t+1}} + 12\beta L_{y*}^2\E\normsq{\y_{t+2} - \y^*_\lambda(\w_{t+1})}\nonumber\\
    &\qquad+ 12\beta\lambda^2 L_g^2\E\normsq{\s_{t+2} - \s^*(\w_{t+1})} + \frac{12\beta C_f^2L_{y*}^2}{\mu_g^2\lambda^2} + \frac{\beta^2\tilde\sigma^2}{B}\nonumber\\
    &=(1-\beta)\E V_t + \frac{2L_*^2\eta^2}{\beta}\E\normsq{\v_{t+1}} + 12\beta L_{y*}^2\E Y_{t+1}+ 12\beta\lambda^2 L_g^2\E S_{t+1} + \frac{12\beta C_f^2L_{y*}^2}{\mu_g^2\lambda^2} + \frac{\beta^2\tilde\sigma^2}{B},
\end{align}
% \begin{align}\label{eq:y_desc}
%     \E Y_t \leq (1-\frac{\gamma\mu_y}{2})Y_{t-1} + \frac{2C_{y*}^2\eta^2}{\gamma\mu_y}\normsq{\v_t} + \frac{\gamma^2\sigma_y^2}{B}
% \end{align}
\begin{align}\label{eq:s_desc_mss}
    \E_t S_t &\leq \E_t\normsq{\bar{\s}_t - \s_t^*}\nonumber\\
    &\leq \left(1-\frac{\gamma\mu_g}{4}\right)S_{t-1} + \frac{4C_s^2\eta^2}{\lambda^2\gamma\mu_g}\normsq{\v_t} + \frac{8\gamma}{\mu_g}\left(\frac{L_f^2}{\lambda^2}+Q_g^2r_\lambda^2\right)\left(Y_{t-1} + \eta^2C_{y*}^2\normsq{\v_t} \right)\nonumber\\&\qquad + \frac{2\gamma^2}{B\lambda^2}(\sigma_f^2 + \frac{4C_f^2\tilde L_g^2}{\mu_g^2}).
\end{align}

Combine~\ref{eq:noncvx_starter},~\ref{eq:mmt_err_recur_mss},~\ref{eq:y_desc} and~\ref{eq:s_desc_mss}, then sum over $t=0, 1, \cdots, T-1$, we have
\begin{align}\label{eq:mss_main_bound}
    \frac{1}{T}\sum_{t=0}^{T-1}\E\normsq{\nabla\L_\lambda^*(\w_t)}&\leq \frac{2(\L_\lambda^*(\w_0) - \inf_\w\L_\lambda^*(\w))}{\eta T} + \frac{\E V_0}{\beta T}\nonumber\\&\qquad + \frac{24\left((L_{y*}^2+32\kappa^2(L_f^2 + \frac{Q_g^2C_f^2}{\mu_g^2})) \E Y_1+2\lambda^2L_g^2S_1\right)}{\gamma\mu_g T}\nonumber\\
    & - \left(\frac{1}{2} - \frac{2L_*^2\eta^2}{\beta^2} - \frac{48\eta^2\left(C_{y*}^2(L_{y*}^2 + 32\kappa^2(L_f^2 + \frac{Q_g^2C_f^2}{\mu_g^2})) + 4L_g^2C_s^2\right)}{\gamma^2\mu_g^2}\right.\nonumber\\
    &\left.\qquad- 384\eta^2\kappa^2C_{y*}^2(L_f^2+\frac{Q_g^2C_f^2}{\mu_g^2})\right)\frac{1}{T}\sum_{t=0}^{T-1}\E\normsq{\v_{t+1}}\nonumber\\
    & + \frac{\beta \tilde\sigma^2}{B} + \frac{24\gamma\sigma_y^2}{\mu_g B}\left(L_{y*}^2+32\kappa^2(L_f^2 + \frac{Q_g^2C_f^2}{\mu_g^2})\right) + \frac{12C_f^2L_{y*}^2}{\mu_g^2\lambda^2}\nonumber\\&\qquad + \frac{96\gamma L_g^2}{\mu_g B}(\sigma_f^2 + \frac{4C_f^2\tilde L_g^2}{\mu_g^2}).
\end{align}
% To make RHS$\leq \epsilon^2$, we require that $\beta\sigma^2\leq O(\epsilon^2), \gamma\lambda^2\leq O(\epsilon^2), \eta\leq O(\min\{\beta, \gamma\}), T\geq \Omega(\frac{1}{\eta\epsilon^2})$, where $\beta = \Theta(\gamma) = \Theta(\eta) = \Theta(\epsilon^4), T = O(\epsilon^{-6})$ satisfy all.
As shown in proof of theorem~\ref{thm:std}, we have that:
\[\L_\lambda^*(\w_0) - \inf_\w\L_\lambda^*(\w)\leq O(1), \E V_0\leq O(\kappa^3), \E Y_1\leq O(\frac{1}{\kappa}), \E S_1\leq \frac{\kappa^2}{\lambda^2}\]
Therefore, to make RHS of eqn.~\ref{eq:mss_main_bound}$\leq \epsilon^2$ with constant level batchsize $B=O(1)$, and note that $\tilde\sigma^2 = O(\bar{\ell}^2\kappa^2)$, we require the following condition holds:
\begin{align}
    \beta\bar{\ell}^2\kappa^2 \leq O(\epsilon^2), \gamma\bar{\ell}\kappa^5\leq O(\epsilon^2), \frac{\bar{\ell}^2\kappa^4}{\lambda^2}\leq O(\epsilon^2), \bar{\ell}^3\kappa^3\gamma\leq O(\epsilon^2),\\
    \max\left\{\frac{L_*^2\eta^2}{\beta^2}, \frac{\eta^2O(\kappa^8)}{\gamma^2}, \eta^2\bar{\ell}^2\kappa^6\right\} \leq O(1),\\
    \frac{\Delta}{\eta T}\leq O(\epsilon^2), \frac{V_0}{\beta T}\leq O(\epsilon^2), \frac{O(\bar{\ell}\kappa^5 Y_1 + \lambda^2\bar{\ell}\kappa S_1)}{\gamma T}\leq O(\epsilon^2).
\end{align}
% Therefore, if we set $\lambda = \Theta(\frac{\kappa^3}{\epsilon}), \beta = \Theta(\frac{\epsilon^2}{\kappa^2}), \gamma = \Theta(\frac{\epsilon^2}{\kappa^5}), \eta = \Theta(\frac{\epsilon^2}{\kappa^{9}})$, we need $T = O(\frac{\kappa^{10}}{\epsilon^4})$ to reach $O(\epsilon)$-stationary point of $\L_\lambda^*(\w)$, which, according to ??, leads to $O(\epsilon)$-stationary point of $F(\w)$. Besides, if we add an extra "warm-up" stage before starting algorithm~\ref{alg:s2f2bo} to get a better starting point of $\y$, i.e. $Y_1 = O(1/\kappa)$, we can improve the dependence of total complexity on $\kappa$ to be $T = O(\frac{\kappa^{9}}{\epsilon^4})$.

Therefore, by setting $\lambda = \Theta(\frac{\kappa^3}{\epsilon}), \beta = \Theta(\frac{\epsilon^2}{\kappa^2}), \gamma = \Theta(\frac{\epsilon^2}{\kappa^5}), \eta = \Theta(\frac{\epsilon^2}{\kappa^{9}})$, it's sufficient for algorithm~\ref{alg:s2f2bo} to return an $O(\epsilon)$-stationary point of $\L_\lambda^*(\w)$ within $T = O(\frac{\kappa^{9}}{\epsilon^4})$ iterations. Then according to lemma~\ref{lem:penal_property}, it is also an $O(\epsilon)$-stationary point of $F(\w)$. 
\end{proof}

\section{Counter example of inequality~\ref{eq:wrong_crossterm_bound}}\label{sec:counter_eg}

We provide a counterexample showing that even under Assumptions~\ref{asm:func}--~\ref{asm:mss},
the inequality
\[
    \left\langle G_t^s, s_t-s_t^* \right\rangle
    \ge c\|s_t-s_t^*\|^2
\]
need not hold for any positive constant $c$.
Consider a scalar lower-level variable $y\in\mathbb{R}$ and define
\[
    f(w,y)=\cos y-1,
    \qquad
    g(w,y)=\frac{1}{2}y^2,
\]
where both functions are independent of $w$.
These functions are infinitely differentiable, $f$ is globally
$1$-Lipschitz and $1$-smooth, and $g$ is $1$-smooth and
$1$-strongly convex in $y$, with a constant Hessian.
For any $\lambda\ge 2$, the penalized lower-level objective satisfies
\[
    \frac{\partial^2}{\partial y^2}
    \left(\lambda^{-1}f(w,y)+g(w,y)\right)
    =1-\lambda^{-1}\cos y
    \ge \frac{1}{2}.
\]
Since its first derivative vanishes at $y=0$, we have
\[
    y_\lambda^*(w)=y^*(w)=0,
    \qquad
    s^*(w)=0,
    \qquad
    F(w)=0.
\]
Thus, Assumption~1 holds. Moreover, taking exact gradients as the
stochastic oracles satisfies Assumption~\ref{asm:oracle} with zero variance and
Assumption~\ref{asm:mss} with $\widetilde L_g=1$.

Now choose any $y_t\in(0,1]$ and set
\[
    z_t=y_t-\frac{\sin y_t}{\lambda}.
\]
Then
\[
    s_t=y_t-z_t=\frac{\sin y_t}{\lambda}>0,
    \qquad
    |s_t|\le\frac{1}{\lambda}
    =\frac{C_f}{\mu_g\lambda}=r_\lambda,
\]
so these iterates satisfy the projected-coupling constraint.
Nevertheless, the corresponding gap-update direction is
\[
\begin{aligned}
    G_t^s
    &=
    \lambda^{-1}\nabla_2 f(w_t,y_t)
    +\nabla_2 g(w_t,y_t)
    -\nabla_2 g(w_t,z_t)\\
    &=
    -\frac{\sin y_t}{\lambda}+y_t-z_t
    =0.
\end{aligned}
\]
Consequently, for every $c>0$,
\[
    \left\langle G_t^s,s_t-s_t^*\right\rangle
    =0
    <
    c\,\frac{\sin^2 y_t}{\lambda^2}
    =
    c\|s_t-s_t^*\|^2,
\]
which contradicts the proposed inequality.

\section{More experiment detail}\label{sec:exp_detail}
We use linear schedule for $\lambda$ of F2SA/F3SA which start from $\lambda_0$ and tune the final $\lambda$; We further tune $\beta$ for our method SICO in $\{0.01, 0.1, 0.3, 0.5, 1\}$. For stoc-SGHA we further tune $\tau$ in $\{10^{-4}, 10^{-3}\}$; $p_w$ in $\{0.005, 0.01, 0.02\}$; $p_\lambda$ in $\{0.00001, 0.0001\}$ and $\beta$ in $\{0.16, 0.24, 0.32\}$; we fix $C_\lambda=10$.
\end{document}